\documentclass{amsart}

\usepackage{amsmath,amssymb}
\usepackage{graphicx}
\usepackage{epstopdf}

\newcommand{\edge}[1]{\stackrel{#1}{\rule[3pt]{2em}{.5pt}}}
\newcommand{\alb}{\mathsf{X}}
\newcommand{\zero}{\mathit{0}}
\newcommand{\one}{\mathit{1}}
\newcommand{\two}{\mathit{2}}
\newcommand{\unit}{\varepsilon}
\newcommand{\xs}{\mathsf{X}^*}
\newcommand{\xo}{\mathsf{X}^\omega}
\newcommand{\arr}{\longrightarrow}
\newcommand{\R}{\mathbb{R}}
\newcommand{\Z}{\mathbb{Z}}
\newcommand{\Q}{\mathbb{Q}}

\newcommand{\X}{\mathcal{X}}
\newcommand{\symm}{\mathsf{S}}
\newcommand{\alt}{\mathsf{A}}
\newcommand{\full}{\mathsf{F}}

\newcommand{\mapdown}[1]%
{\Big\downarrow\rlap{$\vcenter{\hbox{$\scriptstyle#1$}}$}}

\newtheorem{theorem}{Theorem}[section]
\newtheorem{lemma}[theorem]{Lemma}
\newtheorem{proposition}[theorem]{Proposition}
\newtheorem{corollary}[theorem]{Corollary}

\theoremstyle{definition}
\newtheorem{defi}[theorem]{Definition}
\newtheorem{example}[theorem]{Example}
\newtheorem*{problem}{Problem}

\title[Palindromic subshifts and simple groups of intermediate growth]{Palindromic subshifts and simple periodic groups of intermediate growth}
\author{Volodymyr Nekrashevych}

\begin{document}

\maketitle

\begin{abstract}
We describe a new class of groups of Burnside type, giving a procedure transforming an arbitrary non-free minimal action of the dihedral group on a Cantor set into an orbit-equivalent action of an infinite finitely generated periodic group. We show that if the associated Schreier graphs are linearly repetitive, then the group is of intermediate growth.
In particular, this gives first examples of simple groups of intermediate growth.
\end{abstract}

\section{Introduction}

Let $a$ be a homeomorphism of period two (an involution) of a Cantor set $\X$. Choose a finite group $A$ of homeomorphisms of $\X$ such that for all $h\in A$, $\zeta\in\X$ we have $h(\zeta)=\zeta$ or $h(\zeta)=a(\zeta)$, and for every $\zeta\in\X$ there exists $h\in A$ such that $h(\zeta)=a(\zeta)$. We say that $A$ is a \emph{fragmentation} of $a$.

Suppose that we have a minimal action on the Cantor set of the infinite dihedral group $D_\infty$ generated by two involutions $a, b$. (An action is said to be \emph{minimal} if all its orbits are dense.) Let $A$ and $B$ be fragmentations of $a$ and $b$. We are interested in the group $\langle A\cup B\rangle$ generated by $A$ and $B$.

Examples of such groups are the first Grigorchuk
group~\cite{grigorchuk:80_en} and every group from the family of Grigorchuk groups defined in~\cite{grigorchuk:growth_en}. They are all obtained by fragmenting one particular minimal action of the dihedral group (associated with the binary Gray code, see Example~\ref{ex:Grigorchuk} below).

We show that under rather general conditions on the fragmentation, the group $G=\langle A\cup B\rangle$ possesses very interesting properties, pertinent to three classical problems of group theory (Burnside's problem on periodic groups, Day's problem on amenable groups, and Milnor's problem on intermediate growth). For example, \emph{every} non-free minimal action of $D_\infty$ can be fragmented to produce a finitely generated infinite periodic group. Moreover, if the action of $D_\infty$ is expansive, then one can fragment it (in uncountably many ways) to get a simple group. Actions of low complexity (for example coming from palindromic minimal substitutional subshifts) can be fragmented to produce groups of intermediate growth, including simple ones.

Namely, we prove the following (see Theorem~\ref{th:main}).

\begin{theorem}
\label{th:theoremone}
Suppose that $\xi$ is a fixed point of $a$, and that for every $h\in A$ such that $h(\xi)=\xi$, the interior of the set of fixed points of $h$ accumulates on $\xi$. Then the group $\langle A\cup B\rangle$ is periodic.
\end{theorem}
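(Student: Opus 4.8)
The plan is to prove that every element of $G$ has finite order, by induction on the length of a reduced word over $A\cup B$ representing it, carried out simultaneously for \emph{all} groups produced by the fragmentation construction that satisfy the hypothesis of the theorem (call such a group \emph{admissible}); this generality is forced on us, because the inductive step will replace $G$ by a renormalized admissible group. Since $G$ is given together with its faithful action on $\X$, everything takes place in $\mathrm{Homeo}(\X)$. Given $g\in G$, replace it by a cyclically reduced conjugate, so that $g$ is represented by a word $x_1\cdots x_n$ whose letters lie alternately in $A\setminus\{1\}$ and $B\setminus\{1\}$. If $n\le 1$ there is nothing to do, since $x_1$ lies in one of the finite groups $A$, $B$. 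Assume $n\ge 2$ and, inductively, that every element of every admissible group representable by a shorter word is periodic.

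The engine of the induction is a renormalization of the fragmented dihedral group at $\xi$. Since $a(\xi)=\xi$, one may choose a clopen neighbourhood $\X_1\ni\xi$ with $a(\X_1)=\X_1$; inducing the $D_\infty$-action on $\X_1$ (the first-return construction applied to the translation subgroup $\langle ab\rangle$, retaining $a|_{\X_1}$) produces a new minimal action of $D_\infty$ on the Cantor set $\X_1$, and the restrictions to the floors of the resulting finite Kakutani--Rokhlin partition of the elements of $A$ and $B$ assemble into finite groups $A_1$, $B_1$ fragmenting the new pair of involutions. The crucial point is that $\xi$ remains a fixed point of the renormalized copy of $a$, and, because every $h\in A$ is the identity on open subsets of $\X$ that accumulate on $\xi$, every element of $A_1$ inherits this property at $\xi$; hence $G_1=\langle A_1\cup B_1\rangle$ is again admissible. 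Moreover, a suitable power $g^k$ preserves the partition and so decomposes into sections --- its restrictions to the floors, each represented by a word over $A_1\cup B_1$ --- with $g^k$ periodic if and only if every section is.

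The heart of the argument, and the step I expect to be the main obstacle, is the length contraction: one must show that the sections of $g^k$ are represented by words over $A_1\cup B_1$ strictly shorter than $n$, so that the inductive hypothesis, applied inside $G_1$, completes the proof. This is precisely where the topological hypothesis at $\xi$ is turned into a combinatorial bound. When the return word of a piece of the Kakutani--Rokhlin partition is read off, every occurrence of a letter from $A$ that is visited on a part of the tower lying inside one of that letter's fixed open sets --- that is, sufficiently close to $\xi$ --- contributes nothing; what survives involves only letters from $B$, which is free near $\xi$ (no orbit of the action contains a fixed point of both $a$ and $b$, so $\xi\notin\overline{\mathrm{Fix}(b)}$), together with the remaining letters from $A$. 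Bookkeeping this cancellation carefully --- the analogue of Grigorchuk's computation that the first Grigorchuk group is a $2$-group, with the fixed open sets of elements of $A$ accumulating on $\xi$ playing the part of the subtree on which one of Grigorchuk's generators acts trivially --- should yield, for a suitable choice of $\X_1$ and a suitably small $k$, a bound on the section lengths that is genuinely below $n$. The handful of cases of very short words, where no contraction is available and the element lies in a subgroup generated by a single element of $A$ and a single element of $B$ (which, absent the hypothesis, could be an infinite dihedral group), are disposed of directly using the hypothesis, or are absorbed into the inductive step by iterating the decomposition to greater depth. With the contraction in hand the induction closes, and $G$ is periodic; the quantitative form of this contraction is also what underlies the growth statements contained in the more general Theorem~\ref{th:main}.
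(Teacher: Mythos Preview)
Your proposal follows the classical Grigorchuk contraction template: renormalize at $\xi$, decompose a power $g^k$ into sections on the floors of a Kakutani--Rokhlin tower, show the sections are shorter, and induct on length over the whole class of admissible groups. This is not the paper's method, and the paper is explicit about why: ``our method of proving periodicity \ldots\ is substantially different from the original proofs \ldots\ of the Grigorchuk group, since we can not use an action on a rooted tree.'' The groups covered by the theorem include the simple examples coming from subshifts (Thue--Morse, Fibonacci), which are not self-similar in any sense that would support a contraction argument.

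The gap is exactly where you flag it. In the Grigorchuk family, contraction works because the renormalized group is again in the family (a shift of the defining sequence) and, on each first-level subtree, one specific generator from $\{b,c,d\}$ becomes the identity, so a fixed proportion of the $B$-letters in any word is erased. For a general fragmentation none of this is available: there is no canonical rooted tree, the first-return system on a clopen $\X_1$ does not land back in any pre-existing family, and the hypothesis only says that for each $h\in A$ fixing $\xi$ \emph{some} open set of triviality accumulates on $\xi$ --- nothing aligns these sets across different $h$ to force cancellation in an arbitrary word. There is also a prior difficulty you pass over: it is not clear that any power $g^k$ respects the Kakutani--Rokhlin floors, nor that the resulting ``sections'' are elements of $G_1$ with controllable $A_1\cup B_1$-length. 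Your sketch offers only the hope that careful bookkeeping ``should yield'' contraction; I do not see a mechanism.

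The paper's proof avoids contraction entirely and works directly in the orbital graphs, which are decorated bi-infinite chains. The graph of germs $\widetilde\Gamma_\xi$ is $|H_\xi|$ copies of the one-ended chain $\Gamma_\xi$ joined at the root; for each piece $P_i$ accumulating on $\xi$ there is a covering $\lambda_i:\widetilde\Gamma_\xi\to\Lambda_i$ onto a two-ended chain, folding the branches according to $\pi_{P_i}$. The key is Lemma~\ref{lem:returning}: for any length-$m$ segment $\Delta$ ($m=|g|$), some embedded copy in a regular orbital graph has $g^k(v)\in\Delta$ for some $k\ge 1$. If not, the $g$-trajectory escapes to one end; lifted to $\widetilde\Gamma_\xi$ it must enter a nontrivial branch $\{h\}\times\Gamma_\xi$, and because $\xi$ is \emph{purely} non-Hausdorff there is some $\lambda_j$ with $\pi_{P_j}(h)=0$, under which that branch folds back onto the starting one --- so the projected trajectory crosses $\Delta$, a contradiction with one-dimensionality. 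Iterating the lemma over the $m+1$ vertices of $\Delta$ traps the whole $g$-orbit of $\Delta$ in a single segment $\Sigma_m$; minimality places copies of $\Sigma_m$ on both sides of every vertex at bounded distance, uniformly bounding all $g$-orbit lengths, hence $g^n=\varepsilon$ on a dense orbit and thus everywhere.
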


If the action of $D_\infty=\langle a, b\rangle$ is expansive, then the \emph{topological full group} of $G=\langle A\cup B\rangle$ contains an infinite finitely generated simple subgroup $\alt(G, \X)$, see~\cite{nek:fullgr}. The group $\alt(G, \X)$ is a subgroup of a (possibly bigger that $G$) fragmentation $\langle A_1\cup B_1\rangle$ to which Theorem~\ref{th:theoremone} is also applicable, so that the group $\alt(G, \X)$ is periodic. For a definition of the topological full group and the group $\alt(G, \X)$, see Subsection~\ref{ss:full}.

Here a group $G$ is said to be \emph{periodic} if for every $g\in G$ there exists $n$ such that $g^n$ is the identity. The question of existence of finitely generated infinite periodic groups (groups of \emph{Burnside type}) is the classical \emph{Burnside problem}~\cite{burnside:problem}. A harder version (\emph{bounded Burnside problem}) asks for a group with bounded order $n$ of elements.
The general problem (without a bound on the order of elements) was solved by 1964 by E.~Golod and I.~Shafarevich~\cite{golod:nilalgebras}.
The bounded Burnside problem was solved by S.~Adyan and P.~Novikov~\cite{novikovadjan} in 1968. The \emph{restricted Burnside problem} (the bounded Burnside problem in the class of residually finite groups) was solved (in the negative) in 1989 by E.~Zelmanov~\cite{zelmanov:burnside}. For a survey of the Burnside problem and related topics, see~\cite{grigorchlysenok:burnside}.

Previously known examples of infinite periodic finitely generated groups can be split into three classes. One class consists of solutions of the bounded Burnside problem.
They are constructed using some versions of the small cancellation theory: combinatorial as in the original Adyan-Novikov proof (see~\cite{adian:book}), geometrical due to A.~Olshanskii (see~\cite{olshanski:book,ivanov:burnside,lysenok:burnside}) and E.~Rips~\cite{rips:cancellation}, and via M.~Gromov's theory of hyperbolic groups (see~\cite{gro:hyperb,olshanski:residualing}).

Second class are the Golod-Shafarevich groups.
Third class are groups generated by automata and groups defined by their action on rooted trees. The first examples in this class were constructed by S.~Aleshin~\cite{al:burn_en}, V.~Sushchanskii~\cite{sushch:periodic}, R.~Grigorchuk~\cite{grigorchuk:80_en}, and N.~Gupta and S.~Sidki~\cite{gupta-sidkigroup}. Related constructions (self-similar groups, branch groups, etc.) became a very active area of research, see~\cite{handbook:branch,gri:solvedunsolved,nek:book}. But periodic groups in this class remain to be more or less isolated examples.

Note that the groups in the latter two classes are necessarily residually finite. In fact, most methods of study of these groups heavily rely on their residual finiteness (e.g., on their action on a rooted tree). The groups in the first class may be simple, e.g., the Olshanskii-Tarski monsters~\cite{olshanski:monster}.

Theorem~\ref{th:theoremone} produces a new large class of groups of Burnside type. It includes the Grigorchuk group, so it intersects with the above mentioned third class of periodic groups, but it also contains many simple groups. For instance, we produce first example of groups of Burnside type generated by piecewise isometries of a polygon (with a finite number of pieces).

Any non-free (i.e., such that some non-trivial elements have fixed points) action of $D_\infty$ can be fragmented so that it satisfies the conditions of Theorem~\ref{th:theoremone}. For example, if $\mathcal{S}\subset\alb^{\Z}$ is a minimal \emph{palindromic} subshift (such that elements of $\mathcal{S}$ contain arbitrary long palindromes), then the transformations
\[a(w)(n)=w(-n),\qquad b(w)(n)=w(1-n),\]
for $w\in\mathcal{S}$ generate a minimal action of $D_\infty$ such that $a$ or $b$ (depending on parity of the lengths of arbitrarily long palindromes) have a fixed point. Then it is easy to fragment the corresponding generator so that the conditions of Theorem~\ref{th:theoremone} are satisfied. After that, after passing to the group $\alt(G, \mathcal{S})$, we get a finitely generated simple periodic group.

Minimal palindromic shifts are classical objects in Dynamics, see, for example~\cite[Sections~4.3--4]{baakegrimm} and references therein. See also~\cite{GLN}, where spectral properties of a substitutional system associated with the Grigorchuk group are studied.

Answers to the Burnside problem were important examples for the theory of
amenable groups. Amenability was defined by J.~von~Neumann~\cite{vneumann:masses} in his analysis of the Banach-Tarski paradox. He noted that a group containing a non-commutative free group is non-amenable, and showed that amenability is preserved under some group-theoretic operations (extensions, direct limits, passing to a subgroup and to a quotient). So, there are ``obviously non-amenable'' groups (groups containing a free subgroup), and ``obviously amenable'' groups (groups that can be constructed from finite and commutative groups using the above operations). The ``obviously amenable'' groups are called \emph{elementary amenable}. More on amenability,
see~\cite{wagon:banachtarski,paterson:amenability,greenleaf}.

Groups of Burnside type are never obviously non-amenable, since they do
not contain free subgroups. They are also never elementary amenable,
see~\cite[Theorem~2.3]{chou:elementaryamenable}. The fact
that the class of groups without free subgroups and the class of
elementary amenable groups are distinct is proved
in~\cite{chou:elementaryamenable} precisely using the existence of
groups of Burnside type.

Groups of Burnside type were the first examples to show that neither class
(groups without free subgroups and elementary amenable groups)
coincides with the class of amenable groups.

They were the first examples
of non-amenable groups without free subgroups (free Burnside groups
and Tarski monsters, see~\cite{olshanskii:nonamenable,adyan:randomwalk}).
All known examples of finitely generated infinite groups of bounded exponents are non-amenable. A positive answer to an open question by Y.~Shalom~\cite{shalom:congress} (whether free Burnside groups have property T) would imply that all of them are non-amenable.

The groups in the second class (the Golod-Shafarevich groups) are all non-amenable by a result of M.~Ershov~\cite{ershov:gs}.

Groups of Burnside type (the Grigorchuk groups~\cite{grigorchuk:milnor_en,grigorchuk:growth_en}) were also the first examples of non-elementary amenable groups.
In fact, for a long time the only known examples of non-elementary amenable groups were based on the Grigorchuk groups. Later, other examples were constructed~\cite{barthvirag,bkn:amenability,amirangelvirag:linear}, but all of them where defined by their actions on rooted trees, so, in particular, they were residually finite.
For a long time the question if there exist infinite finitely generated simple amenable groups was open.
It was answered by K.~Juschenko and N.~Monod in~\cite{juschenkomonod}. They showed that the \emph{topological full
group} of a minimal homeomorphism of the Cantor set is amenable (confirming a conjecture of R.~Grigorchuk and K.~Medynets). Here
the topological full group of a (cyclic in this case) group $G$ acting on a Cantor set $\X$ is
the group of all homeomorphisms $h:\X\arr\X$ such that for every
$\zeta\in\X$ there exists a neighborhood $U$ of $\zeta$ and an element
$g\in G$ such that $g|_U=h|_U$. In other words, it is obtained by ``fragmenting'' a minimal action of $\Z$ in a way similar to our definition of a fragmentation of $D_\infty$. Our definition is different, however, as we do not require the sets where the action of an element $h\in A$ coincides with the action of $a$ to be open.

It was proved earlier in~\cite{matui:fullI} and~\cite{bezuglyiMedynets:fullgroup} that if $\tau$ is a minimal
homeomorphism of the Cantor set, then the topological full group
of $\langle\tau\rangle$ has simple derived subgroup,
and if the
homeomorphism is expansive (i.e., is conjugate to a subshift), then the derived subgroup is finitely generated. See also~\cite{ChJN}, where a similar result is proved for $\Z^n$-actions. H.~Matui proved in~\cite{matui:expgrowth} that the derived subgroups of the full groups of minimal subshifts are of exponential growth.

The methods of~\cite{juschenkomonod} were generalized in~\cite{YNS} to cover a wide class of non-elementary amenable groups, including almost all known examples.

For arbitrary fragmentations $A, B$ of the generators $a, b$ of a minimal action of the dihedral group, the group $\langle A\cup B\rangle$ can be embedded into the topological full group of a minimal subshift. This was observed for the first time (for the Grigorchuk group) by N.~Matte~Bon in~\cite{mattebon:grigochuk}. It follows that all groups generated by fragmenting a minimal dihedral group are amenable. Theorem~\ref{th:theoremone} produces, therefore, the first examples of simple amenable groups of Burnside type. The only previously known simple groups of Burnside type were of bounded exponents and non-amenable (e.g., Olshanskii-Tarski monsters~\cite{olshanski:monster}).

If $G$ is a group generated by a finite set $S$, then its \emph{growth function} $\gamma(n)$ is the number of elements of $G$ that can be written as products of at most $n$ elements of $S\cup S^{-1}$. J.~Milnor asked in~\cite{milnor:5603} whether there exists a group with growth function eventually bigger than any polynomial and eventually smaller than any exponential function. Such groups are called groups of \emph{intermediate growth}, and the first example of such a group is the Grigorchuk group from~\cite{grigorchuk:80_en,grigorchuk:milnor_en}. Amenability of the Grigorchuk group follows from its intermediate growth.
Until now all known groups of intermediate growth were either based on the Grigorchuk's examples, or were very close to it (see~\cite{barthsunik,buxperez:imgi}). See, for example~\cite{erschler:piecwise,kassabovpak:oscillating,brieussel} where the Grigorchuk group is used to construct groups with different growth types and boundary behavior.

Recently, L.~Bartholdi and A.~Erschler developed a technique of \emph{inverted orbits}, and used it to construct a great variety of groups with a prescribed intermediate growth, see~\cite{BE:permextensions,BE:givengrowth,bartholdiErschler:imbeddings} and~\cite{barth:growthsurvey}. 

All these constructions of groups of intermediate growth used as a starting point the groups from the family of Grigorchuk groups defined in~\cite{grigorchuk:growth_en}. This imposes some restrictions on the type of groups that can be obtained this way. In particular, the following problem, asked by R.~Grigorchuk in 1984 (see Problem 9.8 in the ``Kourovka notebook''~\cite{kourovka}) remained to be open.

\begin{problem}
\label{prob:grigorchuk}
Does there exist a finitely generated simple group of intermediate growth?
\end{problem}

See also~\cite[page~132]{mann:growth}, \cite[Problem~15.17]{kourovka}, \cite[Problem~2]{gri:gap}, and~\cite{bajorskamacedonska,bartholdiErschler:imbeddings}.
In fact, it was even an open question for a long time whether all groups of intermediate growth are residually finite (see~\cite[8.4]{grigorchuk:kyoto}). Note that it follows from M.~Gromov's Theorem~\cite{gro:gr} that groups of polynomial growth are residually finite. The first examples of groups of intermediate growth that are not residually finite were constructed by A.~Erschler in~\cite{erschler:nonresfin}. L.~Bartholid and A.~Erschler constructed a group of intermediate growth containing a simple (locally finite) subgroup~\cite{bartholdiErschler:imbeddings}.

Let $G=\langle A\cup B\rangle$ be a group satisfying the conditions of Theorem~\ref{th:theoremone}. Let $\zeta\in\X$ be a generic point of the Cantor set. Denote by $\Gamma_\zeta$ its \emph{orbital} graph. Its set of vertices is the $G$-orbit of $\zeta$. For every vertex $\eta$ and every generator $s\in A\cup B$ we have an edge connecting $\eta$ with $s(\eta)$, labeled by $s$. Since the generators $s\in A\cup B$ act one each point either trivially or as one of the homeomorphisms $a, b$, the graph $\Gamma_\zeta$ is just a ``decorated'' version of the orbital graph of $\zeta$ for the dihedral group $D_\infty=\langle a, b\rangle$. The latter is a bi-infinite chain, whose edges are alternatingly labeled by $a$ and $b$. The orbital graph $\Gamma_\zeta$ is obtained from it by replacing every edge labeled by $a$ or $b$ by a collection of edges labeled by some elements of $A$ or $B$, respectively, and adding loops labeled by some elements of $A\cup B$. Therefore, the graphs $\Gamma_\zeta$ are naturally represented by bi-infinite sequences $w_\zeta=\ldots x_{-1}x_0x_1\ldots$ over some finite alphabet.

Minimality of the action implies that the graphs $\Gamma_\zeta$ (equivalently, the sequences $w_\zeta$) are \emph{repetitive}. For every finite subgraph $\Sigma$ of $\Gamma_\zeta$ there exists $R_\Sigma\in\mathbb{N}$ such that for every vertex $\eta$ of $\Gamma_\zeta$ there exists an isomorphic copy (as a labeled graph) of $\Sigma$  on distance not more than $R_\Sigma$ from $\eta$ in $\Gamma_\zeta$.

We say that $G$ has \emph{linearly repetitive orbits} if there exists a constant $C$ such that $R_\Sigma$ is bounded from above by $C$ times the diameter of $\Sigma$.
We prove the following (see Theorem~\ref{th:growth}).

\begin{theorem}
\label{th:two}
Let $\langle A\cup B\rangle$ be a group satisfying the conditions of Theorem~\ref{th:theoremone}. If it has linearly repetitive orbits, then it is of intermediate growth. If the action of the dihedral group $D_\infty$ is expansive, then the corresponding group $\alt(\langle A\cup B\rangle, \X)$ is finitely generated, simple, periodic, and has intermediate growth.
\end{theorem}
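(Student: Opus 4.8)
The plan is to prove, for any group $H$ satisfying the hypotheses of Theorem~\ref{th:theoremone} whose orbits are moreover linearly repetitive, two separate estimates on the growth function $\gamma_H$: a subexponential upper bound (this is the only place linear repetitivity is used, and it is the technical heart) and a superpolynomial lower bound (this part is formal). Applying this to $H=G=\langle A\cup B\rangle$ yields the first assertion, and applying it to a suitable refinement of $G$ yields the growth statement in the second assertion.

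\emph{Subexponential upper bound.} Fix a generic $\zeta\in\X$ and work with the orbital graph $\Gamma_\zeta$, i.e.\ the bi-infinite chain of the $D_\infty$-action decorated according to the fragmentation and encoded by a bi-infinite word $w_\zeta$ over a finite alphabet. The mechanism producing periodicity in Theorem~\ref{th:theoremone} — that reading a long enough occurrence of the pattern around the fixed point $\xi$, together with the hypothesis that interiors of fixed-point sets of the $h\in A$ fixing $\xi$ accumulate on $\xi$, allows one to erase a syllable of a group word on a clopen neighbourhood — will be exploited quantitatively. I would equip $(w_\zeta)$ with a uniform sequence of Kakutani--Rokhlin towers, available for linearly repetitive subshifts: boundedly many tile types at each level, level-$(k{+}1)$ tiles built from boundedly many level-$k$ tiles, tower heights growing like $L^k$, and occurrence gaps controlled linearly in the heights. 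On this scaffold one proves a contraction estimate: there are $L\in\mathbb{N}$ and $\lambda<1$ such that every $g\in H$ with $|g|\le n$ is recovered from a bounded amount of combinatorial ``boundary'' data relative to the level-one tower, plus the restrictions of $g$ to the individual tiles, whose total word length is at most $\lambda n+C$. Iterating the decomposition gives $\log\gamma_H(n)=o(n)$, and with the linear control of the tower parameters in fact $\gamma_H(n)\le\exp(Cn^{\alpha})$ for an explicit $\alpha<1$. Verifying the contraction with a \emph{uniform} $\lambda<1$, and controlling the resulting recursion, is the main obstacle.

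\emph{Superpolynomial lower bound.} Since the action of $H$ on $\X$ is faithful and minimal and $\X$ is infinite, $H$ is infinite; by Theorem~\ref{th:theoremone} it is periodic. Were $\gamma_H$ bounded by a polynomial, Gromov's theorem~\cite{gro:gr} would force $H$ to be virtually nilpotent, hence to contain a finitely generated nilpotent subgroup of finite index; a finitely generated nilpotent periodic group is finite, so $H$ would be finite, a contradiction. Combined with the previous paragraph, every such $H$ — in particular $G$ — has intermediate growth.

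\emph{The group $\alt(G,\X)$ in the expansive case.} Expansiveness of the $D_\infty$-action implies, by~\cite{nek:fullgr} (generalizing~\cite{matui:fullI,bezuglyiMedynets:fullgroup}), that $\alt(G,\X)$ is finitely generated, and it is infinite and simple by the same theory. As recalled after Theorem~\ref{th:theoremone}, $\alt(G,\X)$ embeds into a possibly larger fragmentation $G_1=\langle A_1\cup B_1\rangle$ to which Theorem~\ref{th:theoremone} applies, so $\alt(G,\X)$ is periodic. The orbital graphs of $G_1$ are obtained from those of $G$ by refining the finite labelling of the same bi-infinite chains, so $G_1$ again has linearly repetitive orbits; by the first two paragraphs $G_1$ has subexponential growth, hence so does its finitely generated subgroup $\alt(G,\X)$. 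Being infinite, finitely generated and periodic, $\alt(G,\X)$ also has superpolynomial growth by the lower-bound argument. Therefore $\alt(G,\X)$ is finitely generated, simple, periodic, and of intermediate growth.
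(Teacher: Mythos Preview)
Your lower-bound argument (periodicity plus Gromov) and your handling of $\alt(G,\X)$ in the expansive case match the paper. The substantive divergence, and the gap, is in the subexponential upper bound.

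The paper does \emph{not} argue by length contraction over Kakutani--Rokhlin towers. It uses the Bartholdi--Erschler \emph{inverted orbit} technique: for a word $g=g_1\cdots g_n$ one sets $\mathcal{O}_\xi(g)=\{g_1(\xi),g_1g_2(\xi),\ldots,g_1\cdots g_n(\xi)\}$ and $\nu_\xi(n)=\max_{|g|=n}|\mathcal{O}_\xi(g)|$. Linear repetitivity is used to build a nested family of segments $Z_k\subset\Gamma_\xi$ with $|Z_{k+1}|\asymp|Z_k|$, and the purely non-Hausdorff hypothesis enters through the coverings $\lambda_i:\widetilde\Gamma_\xi\to\Lambda_i$ of Proposition~\ref{pr:Lambdai}: any inverted-orbit point that escapes far from $\xi$ yields, after lifting to $\widetilde\Gamma_\xi$ and projecting by a suitably chosen $\lambda_j$, a \emph{first return} of $\xi$ (or of an auxiliary point $\xi'$) of length $>n$. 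Counting first returns (Lemma~\ref{lem:firstreturns}) gives a recursion of the shape $\nu(Mn)\le\rho M\nu(n)+Cn$ with $\rho<1$, whence $\nu(n)\le C_1 n\,e^{-C_2\sqrt{\log n}}$ (Proposition~\ref{pr:uppernu}). Group elements are then encoded by lists of triples indexed by inverted-orbit points, yielding $\gamma(n)\le\exp\bigl(C_1'n\,e^{-C_2'\sqrt{\log n}}\bigr)$.

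Your route---``restrict $g$ to tiles, total restricted length $\le\lambda n+C$, iterate''---is the classical Grigorchuk scheme, and the paper explicitly says why it avoids it: there is no rooted tree here, so there is no canonical way to turn ``the restriction of $g$ to a tile'' into an element of the same group with a well-defined word length. Your sketch does not define these restrictions, does not explain how the non-Hausdorff condition produces a \emph{uniform} $\lambda<1$ (rather than $\lambda$ depending on the word), and you yourself flag this as ``the main obstacle''; that is the missing idea, not a deferred detail. Note also that you claim $\exp(Cn^\alpha)$, strictly stronger than what the paper obtains; even the paper's weaker bound already requires the full inverted-orbit machinery. One small point on the last paragraph: linear repetitivity does pass to the refined fragmentation $G_1$, but the reason is that the finitely many generators of $\alt(G,\X)$ lie in the topological full group, so the new edge labels are determined by a bounded window of the old ones (a sliding-block recoding); you assert this without saying why.
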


Thus the answer to Problem~\ref{prob:grigorchuk} is positive.
For properties of linearly repetitive (also called \emph{linearly recurrent}) dynamical systems and quasi-crystals, and applications to spectral theory of Schr\"odinger operators, see~\cite[Chapters~6 and~9]{kellendonLenz:aperiodicorder} and~\cite{damaniklenz}.

Linear repetitivity of infinite sequences is a stronger condition than their \emph{linear complexity}. An infinite sequence $w$ has linear complexity if the number different subwords of length $n$ of the sequence $w$ is bounded from above by $Cn$ for some $n$. If a group $G$ generated by a fragmentation of a minimal action of the dihedral group has orbital graphs of linear complexity, then $G$ is Liouville, by a theorem of N.~Matte~Bon~\cite{mattebon:liouville} (which is applicable to a more general type of groups and with a weaker condition on the sequences). The Liouville condition (absence of non-constant bounded harmonic functions) is stronger than amenability, but weaker than subexponential growth.

Our method of proving periodicity and intermediate growth is substantially different from the original proofs of periodicity and intermediate growth of the Grigorchuk group, since we can not use an action on a rooted tree. All previous proofs of intermediate growth of a group used ``length reduction'' of automorphisms of rooted trees, as in the original paper of R.~Grigorchuk, or used intermediate growth of the Grigorchuk groups.

We study how points travel inside the orbital graphs $\Gamma_\zeta$ under the action of positive powers of one element (to prove periodicity) or under the action of a long product of generators (to prove intermediate growth). In both cases we use one-dimensional structure of the orbit, i.e., the fact that a trajectory starting in one vertex and ending in another has to pass through all the vertices between them. Small neighborhoods of the special point $\xi$ from Theorem~\ref{th:theoremone} act as ``reflectors'': trajectories approaching them often ``bounce'' and change their direction. This is used in Theorem~\ref{th:theoremone} to prove that a sequence $g^k(\zeta)$, $k\ge 1$, must eventually come back to $\zeta$, thus getting periodicity of $g$. Similar idea shows that in the case when $\Gamma_\zeta$ is linearly repetitive, the trajectory $\zeta, g_1(\zeta), g_2g_1(\zeta), \ldots, g_n\cdots g_2g_1(\zeta)$ of a vertex $\zeta$ under a long product $g_n\cdots g_2g_1$ of generators of $G$ tends to change its direction often, so that it rarely goes far away. This gives a subexponential estimate  of the form $C_1\exp\left(\frac{n}{\exp(C_2\sqrt{\log n})}\right)$ on the total number of elements $g_n\cdots g_2g_1\in G$, thus proving Theorem~\ref{th:two}.

It is interesting to note that the proof of the main result of~\cite{juschenkomonod}, as analyzed in~\cite{YNS} and~\cite{JMMS:extensive}, is also using a similar idea: trajectories of a random walk on the orbital graph $\Gamma_\zeta$ eventually return back to $\zeta$ with probability one. This, together with the one-dimensional structure of the graph implies amenability of the group. Here we do not need the ``reflectors'' produced by a special point of $\X$, since we need only a probabilistic result in this case.

Orbital graphs of the
Grigorchuk groups were studied in great detail by Y.~Vorobets in~\cite{vorob:schreiergraphs};
an important part of our construction is based on his results.

Section~\ref{s:prelim} contains preliminary general facts on groups acting on topological spaces, orbital graphs, graphs of germs, and minimal actions of the dihedral group. In Section~\ref{s:fragmentations} we define fragmentations of dihedral groups, and study their orbital graphs and graphs of germs. Section~\ref{s:periodicity} contains the proof of Theorem~\ref{th:theoremone}. Theorem~\ref{th:two} is proved in Section~\ref{s:growth}. The last Section~\ref{s:fibonacci} describes in detail one particular example: a fragmentation of the substitutional Fibonacci shift. We include it here to give an explicit example of a finitely generated simple periodic group of intermediate growth, without relying on somewhat indirect proof of finite generation of $\alt(G, \X)$ in~\cite{nek:fullgr}.

The author is very grateful to Laurent Bartholdi, Yves de Cornulier, Rostislav Grigorchuk, Kate Juschenko, Nicolas Matte~Bon, Mark Sapir, and
Said Sidki for remarks and suggestions.

\section{Preliminaries on group actions}
\label{s:prelim}
We use left actions, so in a product $a_1a_2$ the transformation $a_2$ is performed before $a_1$. We denote the identity transformation and the identity element of a group by $\unit$. The symmetric and the alternating group acting on a set $A$ is denoted $\symm(A)$ and $\alt(A)$, respectively.

For a finite alphabet $\alb$, we denote by $\xo$ the space of infinite one-sided sequences $x_1x_2\ldots$ of elements of $\alb$, and by $\alb^{\Z}$ the space of two-sided sequences $\ldots x_{-2}x_{-1}\;.\;x_0x_1\ldots$. Both spaces are endowed with the direct product topology, where $\alb$ is discrete. We denote by $\xs$ the set of all finite words over the alphabet $\alb$, i.e., the free monoid generated by $\alb$.

\subsection{Graphs of actions}
\label{ss:graphsofactions}

All graphs in this section are oriented, loops and multiple edges are
allowed. Their edges are labeled. Distances
between vertices in such graphs are measured ignoring the
orientation. Similarly, connectedness and connected components are also
defined ignoring the orientation. Isomorphisms must preserve
orientation and labeling. A graph is called \emph{rooted} if one
vertex, called the \emph{root}, is marked. Every morphism of rooted
graphs must map the root to the root.

We denote a ball of radius $r$ with center in a vertex $v$ of a graph
$\Gamma$ by $B_v(r)$. It is considered to be a rooted graph (with root the 
$v$). Its set of edges is the set of all edges of $\Gamma$ connecting
the vertices of $B_v(r)$. The orientation and labeling are inherited from $\Gamma$.

Let $G$ be a group generated by a finite set $S$ and acting by
homeomorphisms on a compact metrizable space $\X$.

For $\zeta\in\X$,
the \emph{orbital graph} $\Gamma_\zeta$ is the graph with the set of
vertices equal to the orbit $G\zeta$ of $\zeta$, in which for every
$\eta\in G\zeta$ and every $s\in S$ there is an arrow from $\eta$ to
$s(\eta)$ labeled by $s$.

The graph $\Gamma_\zeta$ is naturally isomorphic to the \emph{Schreier
  graph} of the group $G$ modulo the stabilizer $G_\zeta$. The
Schreier graph of $G$ modulo a subgroup $H$ is, by definition, the graph with the
set of vertices equal to the set of cosets $gH$, $g\in G$, in which for
every coset $gH$ and every generator $s\in S$ there is an arrow
from $gH$ to $sgH$ labeled by $s$.

Denote by $G_{(\zeta)}$ the subgroup of elements of $G$ acting trivially on a neighborhood of $\zeta$, i.e., the subgroup of all elements
$g\in G$ such that $\zeta$ is an interior point of the set of fixed
points of $g$. The
\emph{graph of germs} $\widetilde\Gamma_\zeta$ is the Schreier graph of $G$
modulo $G_{(\zeta)}$. Note that $G_{(\zeta)}$ is a normal subgroup of
$G_\zeta$, hence the map $hG_{(\zeta)}\mapsto hG_\zeta$ induces a
Galois covering of graphs $\widetilde\Gamma_\zeta\arr\Gamma_\zeta$ with
the group of deck transformations $G_\zeta/G_{(\zeta)}$. We call $G_\zeta/G_{(\zeta)}$ the \emph{group of germs} of the point $\zeta$.

The vertices of $\widetilde\Gamma_\zeta$ are identified with \emph{germs}
of elements of $G$ at $\zeta$. Here a germ is an
equivalence class of a pair $(g, \zeta)$, where two pairs $(g_1,
\zeta)$ and $(g_2, \zeta)$ are equivalent if there exists a
neighborhood $U$ of $\zeta$ such that $g_1|_U=g_2|_U$.

\begin{defi}
A point $\zeta\in\X$ is said to be \emph{$G$-regular} if its group of germs is trivial, i.e., if every element $g\in G$ fixing $\zeta$
fixes pointwise a neighborhood of $\zeta$. If $\zeta$ is not $G$-regular, then we say that it is \emph{singular}.
\end{defi}

Note that for every $g\in G$ the set of points $\zeta\in\X$ such that
$g(\zeta)=\zeta$ but $g\notin G_{(\zeta)}$ is equal to the boundary of
the set of fixed points of $g$. It follows that this set is closed
and nowhere dense. Consequently, if $G$ is countable (in particular, if
$G$ is finitely generated), then the set of $G$-regular points is co-meager
(residual).

Note also that $gG_\zeta g^{-1}=G_{g(\zeta)}$ and
$gG_{(\zeta)}g^{-1}=G_{(g(\zeta))}$ for all $\zeta\in\X$ and $g\in G$,
which implies that the set of $G$-regular points is $G$-invariant.

Depending on  the separation conditions for the elements of the group of germs $G_\zeta/G_{(\zeta)}$ with respect to the natural topology on the groupoid of germs,  singular points can be classified in the following way.

\begin{defi}
\label{def:singularities}
Suppose that $\zeta\in\X$ is a singular point.

We say that $\zeta$ is a \emph{Hausdorff singularity} if for every $g\in G_\xi\setminus G_{(\xi)}$ the interior of the set of fixed points of $g$ does not accumulate on $\xi$. Otherwise, $\zeta$ is a \emph{non-Hausdorff singularity}.

We say that $\zeta$ is a \emph{purely non-Hausdorff singularity} if for every $g\in G_\xi$ the interior of the set of fixed points of $g$ accumulates on $\xi$.
\end{defi}

Let $(\Gamma_1, v_1)$, $(\Gamma_2, v_2)$ be connected rooted labeled graphs, where $v_i$ are the roots. Define the distance $d((\Gamma_1, v_1), (\Gamma_2, v_2))$ between them as
$2^{-(R+1)}$, where $R$ is the maximal integer
such that the balls $B_{v_1}(R)\subset\Gamma_1$ and
$B_{v_2}(R)\subset\Gamma_2$ of radius $R$ with centers in $v_1$ and
$v_2$ are isomorphic as rooted graphs. This metric defines a natural topology on
the space $\mathcal{G}$
of all isomorphism classes of connected oriented rooted labeled graphs. If we fix a finite set of
labels, then the space of all such graphs is compact.

It is easy to see that the map $\zeta\mapsto(\Gamma_\zeta, \zeta)$ from $\X$ to the space of labeled rooted graphs is continuous at regular points. The map $\zeta\mapsto(\widetilde\Gamma_\zeta, \zeta)$ is continuous at regular points and at Hausdorff singularities.

\begin{defi}
The action of $G$ on $\X$ is said to be \emph{minimal} if all
$G$-orbits are dense in $\X$.
\end{defi}

\begin{proposition}
\label{prop:regularcontainment}
Suppose that the action of $G$ on $\X$ is minimal. Let $\zeta\in\X$ be a
$G$-regular point. Then for every
ball $B_\zeta(r)$ of $\Gamma_\zeta$ there exists a number $R(r)>0$ such that
for every $\eta\in\X$ there exists a vertex $\eta'$ of $\Gamma_\eta$ such
that $d(\eta, \eta')\le R(r)$ and the rooted balls $B_\zeta(r)$ and $B_{\eta'}(r)$ are isomorphic.
\end{proposition}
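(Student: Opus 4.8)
The plan is to exploit compactness of the space $\mathcal{G}$ of rooted labeled graphs together with continuity of the map $\zeta\mapsto(\Gamma_\zeta,\zeta)$ at the regular point $\zeta$, and then use minimality to spread a local pattern around $\zeta$ over the whole space. First I would fix the radius $r$ and consider the rooted ball $B_\zeta(r)$ in $\Gamma_\zeta$. Since $\zeta$ is $G$-regular, the remark preceding Proposition~\ref{prop:regularcontainment} tells us that $\zeta\mapsto(\Gamma_\zeta,\zeta)$ is continuous at $\zeta$; hence there is an open neighborhood $U$ of $\zeta$ in $\X$ such that for every $\eta\in U$ the rooted ball $B_\eta(r)$ is isomorphic (as a rooted labeled graph) to $B_\zeta(r)$. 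This is the step where $G$-regularity is essential: at a singular point the orbital graph can jump, so no such $U$ need exist.

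Next I would invoke minimality. The action being minimal means every orbit is dense, and a standard equivalent formulation is that for the open set $U$ above there exists a finite set $g_1,\dots,g_m\in G$ with $\X=\bigcup_{i=1}^m g_i(U)$ (by compactness of $\X$ and the fact that $\bigcup_{g\in G}g(U)$ is a nonempty open $G$-invariant set, hence all of $\X$). Now given an arbitrary $\eta\in\X$, pick $i$ with $\eta\in g_i(U)$, so $g_i^{-1}(\eta)\in U$. Set $\eta'=g_i^{-1}(\eta)$... wait, more carefully: I want a vertex $\eta'$ of $\Gamma_\eta$ close to $\eta$ with $B_{\eta'}(r)\cong B_\zeta(r)$. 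The point $g_i^{-1}(\eta)$ lies in $U$, so $B_{g_i^{-1}(\eta)}(r)\cong B_\zeta(r)$; but $g_i^{-1}(\eta)$ need not be in the orbit of $\eta$ — however, applying the homeomorphism $g_i$ back, the rooted ball $B_{g_i^{-1}(\eta)}(r)$ in $\Gamma_{g_i^{-1}(\eta)}$ is carried by $g_i$ to a rooted ball in $\Gamma_\eta$ centered at $\eta$ itself, since $g_i$ maps $\Gamma_{g_i^{-1}(\eta)}$ isomorphically onto $\Gamma_\eta=\Gamma_{g_i(g_i^{-1}(\eta))}$. That shows $B_\eta(r)\cong B_\zeta(r)$ directly, so in fact one may take $\eta'=\eta$, i.e. $d(\eta,\eta')=0$.

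So the cleanest route: set $R(r)=0$. Explicitly, the set $V=\{\eta\in\X:B_\eta(r)\cong B_\zeta(r)\text{ as rooted labeled graphs}\}$ is open (by continuity at regular points — though one must be a little careful, since continuity is only asserted at regular points, I would instead argue $V$ contains the open set $U$ above and is $G$-invariant because $g$ induces an isomorphism $\Gamma_\eta\to\Gamma_{g(\eta)}$ sending root to root) and nonempty and $G$-invariant, hence by minimality $V=\X$. Then for every $\eta$ we take $\eta'=\eta$ and $R(r)$ can be taken to be $0$ (or any positive number if the statement insists on $R(r)>0$). The main obstacle — and the only subtle point — is justifying that $V$ is open despite the map $\zeta\mapsto(\Gamma_\zeta,\zeta)$ being continuous only at regular (and Hausdorff singular) points: the resolution is that we do not need $V$ open, only that it contains a nonempty open set and is $G$-invariant, and $G$-invariance is immediate from $gG_\zeta g^{-1}=G_{g(\zeta)}$, which makes $\eta\mapsto g(\eta)$ an isomorphism of rooted orbital graphs. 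I would also remark that this argument in fact proves the stronger statement with $R(r)=0$, which presumably is why the proposition is stated the way it is — later uses may want the flexibility, or the graph-of-germs analogue $\widetilde\Gamma$ where singularities force a genuinely positive $R(r)$.
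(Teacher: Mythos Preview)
Your argument contains a genuine error in the second half. You claim that for $g\in G$ the map $\nu\mapsto g(\nu)$ is an isomorphism of rooted labeled graphs $(\Gamma_\eta,\eta)\to(\Gamma_{g(\eta)},g(\eta))$, and hence that the set $V=\{\eta:B_\eta(r)\cong B_\zeta(r)\}$ is $G$-invariant. This is false. In the orbital graph an edge labeled $s$ joins $\nu$ to $s(\nu)$; applying $g$ sends this pair to $g(\nu)$ and $g(s(\nu))=(gsg^{-1})(g(\nu))$, and $gsg^{-1}$ is not a generator in general, so left translation by $g$ does not preserve edges. (The map that \emph{does} commute with the edge structure is the identity on the orbit --- $\Gamma_\eta$ and $\Gamma_{g(\eta)}$ are literally the same graph --- but then the root moves from $\eta$ to $g(\eta)$, which is exactly the point.) The conjugacy $gG_\eta g^{-1}=G_{g(\eta)}$ does give an isomorphism of Schreier graphs, but it sends the coset $G_\eta$ to $g^{-1}G_{g(\eta)}$, i.e.\ root to a non-root vertex. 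In concrete terms: orbital/Schreier graphs are typically \emph{not} vertex-transitive, and in the fragmented dihedral examples of this paper different vertices of the same chain have different adjacent edge-labels, so $B_\eta(1)\not\cong B_{\eta'}(1)$ for many pairs $\eta,\eta'$ in one orbit. Thus one cannot take $R(r)=0$.

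Your first step, producing an open neighborhood $U$ of $\zeta$ on which $B_{\eta'}(r)\cong B_\zeta(r)$ for all $\eta'\in U$, is correct and is exactly what the paper does. The fix is then straightforward and is the paper's argument: by minimality and compactness there are finitely many $g_1,\dots,g_m\in G$ with $\X=\bigcup_i g_i^{-1}(U)$; given $\eta$, choose $i$ with $g_i(\eta)\in U$ and set $\eta'=g_i(\eta)$. Then $\eta'$ is a vertex of $\Gamma_\eta$ with $d(\eta,\eta')\le|g_i|$ and $B_{\eta'}(r)\cong B_\zeta(r)$. Take $R(r)=\max_i|g_i|$.
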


\begin{proof}
The ball $B_\zeta(r)$ can be described by a finite system of equations and
inequalities of the form $g_1(\zeta)=g_2(\zeta)$ or $g_1(\zeta)\ne g_2(\zeta)$, for
pairs of elements $g_1, g_2\in G$ of length at most $r$. Such an equality or inequality is valid for all points of some
neighborhood of $\zeta$. It follows that there exists a neighborhood $N$
of $\zeta$ such that for every $\eta'\in N$ the balls $B_\zeta(r)$ and $B_{\eta'}(r)$ of
the corresponding orbital graphs are isomorphic as rooted labeled graphs.

For every point $\eta\in\X$ there exists an element $g\in G$ such that
$g(\eta)\in N$. The set of sets of the form $g^{-1}(N)$ cover $\X$,
and, by compactness, there exists a finite subcover $g_1^{-1}(N),
g_2^{-1}(N), \ldots, g_n^{-1}(N)$. Let $R$ be the maximal length of
the elements $g_i$ with respect to the generating set $S$. Then for every
$\eta\in\X$ there exists $g_i$ such that $g_i(\eta)\in N$, and hence
the balls $B_\zeta(r)$ and $B_{\eta'}(r)$ are isomorphic for
$\eta'=g_i(\eta)$. Distance from $\eta$ to $\eta'$ is not more than $R$. Since the number of isomorphism classes of balls of radius $r$ in orbital graphs of $G$ is finite, we can find an estimate $R(r)$ independent of $\zeta$.
\end{proof}

\begin{defi}
\label{def:linearlirecurrent}
We say that the action of $G$ on $\X$ is \emph{linearly repetitive} if there exists $K>1$ such that the function $R(r)$ from Proposition~\ref{prop:regularcontainment} satisfies $R(r)<Kr$ for all $r>1$.
\end{defi}

\subsection{Topological full groups}
\label{ss:full}

Let $G$ be a group acting on a Cantor set $\X$. The \emph{topological
  full group} $\full(G, \X)$ of the action is the group of all
homeomorphisms $h:\X\arr\X$ such that for every $\zeta\in\X$ there
exists a neighborhood $U$ of $\zeta$ and an element $g\in G$ such that
$h|_U=g|_U$.
Topological full groups were introduced in~\cite{gior:full} (see also~\cite{krieger:homgroups}, where they appeared earlier). See the
papers~\cite{matui:fullI,matui:expgrowth,matui:fullonesided,matui:etale} and the survey~\cite{cornulier:pleinstopologiques}
for various properties of topological full groups of group actions and
\'etale groupoids.

Let $U\subset\X$ be a non-empty clopen set, and let $g_1, g_2, \ldots, g_n\in G$
be such that the sets $U_1=g_1(U), U_2=g_2(U), \ldots, U_n=g_n(U)$ are pairwise
disjoint. Then for every permutation $\alpha\in\symm_n=\symm(\{1, 2, \ldots, n\})$ we get the
corresponding element $h_\alpha$ of the topological full group acting by the
rule:
\[h_\alpha(\zeta) = \left\{\begin{array}{rl} g_jg_i^{-1}(\zeta) &
    \text{if $\zeta\in U_i$ and $\alpha(i)=j$;}\\
\zeta & \text{if $\zeta\notin\bigcup_{i=1}^n
  U_i$.}\end{array}\right.\]
The map $\alpha\arr h_\alpha$ is a monomorphism from $\symm_n$ to
$\full(G)$. Denote by $\alt(G, \X)$ the subgroup
generated by the images of the alternating subgroups $\alt_n<\symm_n$ for
all such monomorphisms.

The following is proved in~\cite{nek:fullgr}.

\begin{theorem}
\label{th:fullgrnek}
If the action of $G$ on $\X$ is minimal, then $\alt(G, \X)$ is
simple and is contained in every non-trivial normal subgroup of $\full(G, \X)$. If the action of $G$ on $\X$ is expansive and has infinite
orbits, then $\alt(G, \X)$ is finitely generated.
\end{theorem}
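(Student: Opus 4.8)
The plan is to separate the statement into its two genuinely different halves: (i) simplicity of $\alt(G,\X)$ together with its being contained in every non-trivial normal subgroup of $\full(G,\X)$, which follows from a commutator argument using only minimality; and (ii) finite generation under expansiveness, which is a more combinatorial matter. For (i) I would first fix a convenient generating set of $\alt(G,\X)$: the \emph{generalized $3$-cycles}, that is, the elements $h_\alpha$ with $\alpha$ a $3$-cycle, which cyclically permute three pairwise disjoint clopen sets $A_1,A_2,A_3$ by elements of $G$ (the composite around the triangle being trivial on $A_1$) and fix everything outside $A_1\cup A_2\cup A_3$. Since $\alt_n$ is generated by $3$-cycles and the clopen set $U$ in the definition of $h_\alpha$ may be subdivided into arbitrarily small clopen pieces, $\alt(G,\X)$ is generated by generalized $3$-cycles of arbitrarily small support. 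Since $\alt_n$ ($n\ge 5$) is perfect and a generalized $3$-cycle can always be exhibited as a commutator of generalized $3$-cycles once two further disjoint clopen ``room'' sets are found — which a Cantor set always supplies, after fragmenting — both $\alt(G,\X)$ and each subgroup $\alt(G,V)$ of elements supported in a non-empty clopen $V$ (extended by the identity) are perfect.

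Next comes the commutator lemma. Let $N$ be a non-trivial normal subgroup of $\full(G,\X)$ — or of $\alt(G,\X)$; the two cases run in parallel, the only difference being which group the conjugating elements are allowed to come from. Choose $g\in N\setminus\{\unit\}$, a point $\zeta$ with $g(\zeta)\ne\zeta$, and a clopen neighbourhood $V$ of $\zeta$ with $V\cap g(V)=\emptyset$. For $t_1,t_2\in\alt(G,V)$ the conjugate $gt_1g^{-1}$ is supported in $g(V)$, which is disjoint from $V$, so $[g,t_1]$ restricts to $t_1^{-1}$ on $V$; writing $[g,t_1]=g\cdot(t_1g^{-1}t_1^{-1})$ shows $[g,t_1]\in N$, and a direct computation gives $[[g,t_1],t_2]=[t_1^{-1},t_2]$. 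Hence $[t_1^{-1},t_2]\in N$ for all $t_1,t_2\in\alt(G,V)$, and these commutators generate $[\alt(G,V),\alt(G,V)]=\alt(G,V)$ by perfectness, so $\alt(G,V)\subseteq N$.

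Then I transport this by minimality. Let $t=(B_1\,B_2\,B_3)$ be a generalized $3$-cycle of small support. Fragmenting $B_1\cup B_2\cup B_3$ and using minimality, one produces an element $p\in\full(G,\X)$ carrying $B_1\cup B_2\cup B_3$ into $V$ with $ptp^{-1}$ again a generalized $3$-cycle supported in $V$; then $ptp^{-1}\in\alt(G,V)\subseteq N$, and normality of $N$ gives $t\in N$. When $N\trianglelefteq\alt(G,\X)$ the transporting element must itself lie in $\alt(G,\X)$: this is arranged by replacing an element of $\full(G,\X)$ that does the job with an $\alt(G,\X)$-element agreeing with it on the small support of $t$, obtained by completing the relevant partial bijection to a generalized $3$-cycle using one auxiliary disjoint clopen set. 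Since $N$ now contains every generalized $3$-cycle of small support, and these generate $\alt(G,\X)$, we get $\alt(G,\X)\subseteq N$; specialising to $N\trianglelefteq\alt(G,\X)$ yields $N=\alt(G,\X)$. This proves simplicity of $\alt(G,\X)$ and its containment in every non-trivial normal subgroup of $\full(G,\X)$.

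Finally, finite generation under expansiveness with infinite orbits. Expansiveness provides a finite clopen partition $\mathcal P$ whose iterated refinements under the finite generating set $S$ of $G$ separate points, so that clopen ``cylinders'' of bounded depth form a basis of $\X$. As a finite candidate generating set I would take all generalized $3$-cycles whose three clopen sets are cylinders of depth at most a fixed $k_0$ and whose $G$-components are products of at most $\ell_0$ generators, for suitably chosen $k_0,\ell_0$. The work is to show that an arbitrary generalized $3$-cycle lies in the subgroup these generate: after subdividing its support into tiny cylinders, and then unwinding each long $G$-translation between two cylinders into a bounded-length chain of moves between cylinders of bounded depth — the analogue of writing a long-range transposition as a product of adjacent ones — one keeps the cylinder depths and the $G$-word lengths under a uniform bound throughout. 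This simultaneous control of depth and word length is the main obstacle; the commutator and transport arguments above are essentially formal once minimality is available, whereas here one genuinely uses the expansive (subshift) structure and the existence of infinite orbits.
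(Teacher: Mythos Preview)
The paper does not prove this theorem at all: it is quoted verbatim from~\cite{nek:fullgr} (``The following is proved in~\cite{nek:fullgr}'') and used as a black box. There is therefore no internal proof to compare your proposal against; what you have written is, in effect, an attempt to reproduce the argument of the cited reference.

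For part~(i) your outline is essentially the standard route and matches what is done in~\cite{nek:fullgr} (and, earlier, in Matui's work for $\Z$-actions): generation of $\alt(G,\X)$ by generalized $3$-cycles of arbitrarily small support, perfectness of $\alt(G,V)$ for non-empty clopen $V$, the double-commutator trick $[[g,t_1],t_2]=[t_1^{-1},t_2]$ to trap $\alt(G,V)$ inside any non-trivial normal subgroup, and a minimality-based transport argument. Your computation of the double commutator is correct, and your remark about upgrading the transporting element from $\full(G,\X)$ to $\alt(G,\X)$ by completing a partial bijection to a $3$-cycle is the right fix. One point you glide over: when you ``produce an element $p$ carrying $B_1\cup B_2\cup B_3$ into $V$ with $ptp^{-1}$ again a generalized $3$-cycle,'' you implicitly need $p$ to be given by a \emph{single} element of $G$ on each $B_i$, which requires first subdividing the $B_i$ into pieces small enough that a single element of $G$ moves each piece into $V$; this is routine but should be said.

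For part~(ii) you are honest that what you have is only a plan, and indeed the finite-generation argument in~\cite{nek:fullgr} is the substantive part of that paper. Your intuition---bound cylinder depth and $G$-word length simultaneously, and factor a long $3$-cycle as a product of bounded ones in the spirit of writing a distant transposition as a product of adjacent ones---points in the right direction, but the actual proof requires a careful induction exploiting the groupoid/subshift structure; ``unwinding each long $G$-translation into a bounded-length chain'' is where all the work lives, and as written this step is a gap rather than a proof. Since the present paper treats the result as imported, this is not a defect relative to the paper, but you should not regard part~(ii) as proved by your sketch.
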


\begin{defi}
An action of $G$ on $\X$ is said to be \emph{expansive} if there exists
$\delta>0$ such that $d(g(\zeta_1), g(\zeta_2))<\delta$ for all $g\in
G$ implies $\zeta_1=\zeta_2$ (where $d$ is a metric on $\X$ compatible with the topology).
\end{defi}

 An action $(G, \X)$ on a Cantor set is expansive if
and only if there exists a $G$-equivariant homeomorphism from $\X$ to
a closed $G$-invariant subset of $A^G$ for some finite alphabet $A$.

\subsection{Minimal actions of the dihedral group}

When a set of generators $S$ of a group $G$ consists of elements of
order two, then we will consider the orbital graphs and graphs of germs
as non-oriented, so that an edge connecting two vertices $v_1$ and
$v_2$ labeled by $s\in S$ replaces two arrows labeled by $s$: one from
$v_1$ to $v_2$ and one from $v_2$ to $v_1$ (if the edge is not a loop).

Let $a, b$ be homeomorphisms of period two of a Cantor set $\X$ such
that the dihedral group $\langle a, b\rangle$ acts minimally on $\X$.

\begin{lemma}
The orbital graphs of $\langle a, b\rangle$ are either one-ended or two-ended  infinite chains. The graphs of germs are two-ended infinite chains.
\end{lemma}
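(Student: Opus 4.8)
The plan is to analyze the orbital graph $\Gamma_\zeta$ of the dihedral group $\langle a,b\rangle$ at an arbitrary point $\zeta\in\X$ and show it is a chain, and then do the same for the graph of germs. First I would observe that since $a$ and $b$ are involutions, each vertex $\eta$ of $\Gamma_\zeta$ has at most one edge labeled $a$ (a loop if $a(\eta)=\eta$, otherwise a non-loop to $a(\eta)$) and at most one edge labeled $b$. Hence the total degree at any vertex is at most $2$, so every connected component of $\Gamma_\zeta$ is either a finite cycle, a bi-infinite chain, a one-ended (half-infinite) chain, or a single vertex/edge. A finite cycle is impossible because it would force the orbit $G\zeta$ to be finite, contradicting minimality on an infinite set (the Cantor set has no isolated points, so minimality forces all orbits to be infinite). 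Similarly a single vertex or single edge would give a finite orbit. This leaves one-ended or two-ended infinite chains, proving the first assertion.

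For the graphs of germs $\widetilde\Gamma_\zeta$, the same degree bound applies: the germ of $a$ at a vertex $w$ is either trivial (giving a loop) or moves $w$ to the germ at $a(\eta)$, and likewise for $b$, so again every vertex has degree at most $2$, and $\widetilde\Gamma_\zeta$ is a disjoint union of cycles and chains. Since $\widetilde\Gamma_\zeta$ covers $\Gamma_\zeta$ and the latter is an infinite chain, $\widetilde\Gamma_\zeta$ cannot be a finite cycle. So it remains to rule out the one-ended case for the graph of germs, i.e.\ to show $\widetilde\Gamma_\zeta$ is genuinely two-ended. The key point is that a one-ended chain must have exactly one vertex of degree $1$, which is an endpoint where \emph{both} the $a$-edge and the $b$-edge are loops; equivalently, a germ $(g,\zeta)$ at which both $a$ and $b$ act trivially on no neighborhood... more precisely a germ where the local actions of $a$ and $b$ both fix that germ. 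I would argue this cannot happen: if the germ of $a$ and the germ of $b$ at a point $\eta$ are both trivial, then $a$ and $b$ both fix a neighborhood of $\eta$ pointwise, so the whole group $\langle a,b\rangle$ fixes that neighborhood pointwise, contradicting minimality (a nontrivial minimal action on an infinite space has no nonempty open set of global fixed points). The only remaining subtlety is that a one-ended chain could instead terminate because one of the labels, say $a$, is a loop at the endpoint while the $b$-edge is a non-loop — but then the endpoint has degree... no: in the graph of germs the endpoint of a one-ended chain has degree $1$, and since there are two labels each contributing a loop or a non-loop edge, degree $1$ forces exactly one label to be a loop and... I would instead argue directly that the orbital graph already has an endpoint in the one-ended case precisely when $a(\eta)=\eta$ or $b(\eta)=\eta$ at that vertex with the other generator also reflecting; at the level of germs, a reflection point of $a$ (where $a(\eta)=\eta$) that is not in the interior of $\mathrm{Fix}(a)$ gives a \emph{non-loop} $a$-edge in $\widetilde\Gamma_\zeta$ because the germ of $a$ there is nontrivial. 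Thus at any vertex of $\widetilde\Gamma_\zeta$, an $a$-loop occurs only when $a$ fixes a neighborhood, and then $b$ cannot also fix a neighborhood by the minimality argument above, so the $b$-edge is a non-loop; hence no vertex has degree $1$, and $\widetilde\Gamma_\zeta$ is a two-ended chain.

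The main obstacle I anticipate is the careful bookkeeping distinguishing, in the graph of germs, a loop coming from $a$ fixing a whole neighborhood of $\eta$ versus a non-loop arising from a fixed point $a(\eta)=\eta$ that lies on the boundary of $\mathrm{Fix}(a)$ (where the germ is nontrivial); getting this right is exactly what forces the "two-ended" conclusion for germs as opposed to "one- or two-ended" for the plain orbital graph. One should also confirm at the outset that minimality of $\langle a,b\rangle$ on the Cantor set $\X$ implies all orbits are infinite — this is immediate since a finite orbit would be a nonempty closed invariant proper subset (as $\X$ is infinite with no isolated points) — and that minimality forbids a nonempty open set fixed pointwise by all of $G$, since the complement of its closure would be a proper nonempty closed invariant set. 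With these two standard facts in hand, the degree argument does the rest.
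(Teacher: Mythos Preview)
Your treatment of the orbital graphs is fine and matches the paper's: the degree bound plus minimality rules out finite components, leaving one- or two-ended chains.

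The argument for the graphs of germs has a genuine gap. You correctly observe that an $a$-loop at a vertex of $\widetilde\Gamma_\zeta$ forces $a$ to fix a neighborhood of the underlying point, and you correctly note that $b$ cannot then also fix a neighborhood (else the whole group would fix an open set, contradicting minimality). But from this you conclude ``hence no vertex has degree $1$'', which is a non-sequitur: a vertex with an $a$-loop and a $b$-non-loop edge is \emph{exactly} an endpoint of a one-ended chain --- it is precisely the degree-$1$ configuration you need to exclude, not a contradiction to it. Your argument only rules out a vertex with \emph{two} loops (which would be an isolated component, not an endpoint), and that case was never the issue.

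What is missing is the paper's key observation: in a one-ended chain there is exactly \emph{one} loop in the entire graph. If that loop is an $a$-loop at the endpoint $\eta$, then $a$ fixes an open neighborhood $U$ of $\eta$; by minimality the orbit is dense, so $U$ contains other orbit points $\eta'\ne\eta$, and at each such $\eta'$ the germ of $a$ is again trivial, producing additional $a$-loops in $\widetilde\Gamma_\zeta$. This contradicts uniqueness of the loop. Your minimality argument was aimed at the wrong target (ruling out a second generator fixing the \emph{same} neighborhood, rather than finding a second \emph{point} in the same neighborhood).
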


\begin{proof}
The Schreier
graphs of the infinite dihedral group $D_\infty$ are either infinite
chains (one-ended or two-ended), or finite chains, or finite
cycles. The latter two cases are impossible, since then we have a
finite orbit, which contradicts with minimality.

Suppose that a graph of germs is a one-ended infinite chain. Then the
endpoint of the chain is a fixed point of one of the generators, and there are no other fixed points of the generators in this orbit. Since
this is a graph of germs, it follows that the generator fixes this
point together with every point of a neighborhood. But then, by minimality, there are
other points of the orbit where the germ of the generator is trivial, which is a contradiction.
\end{proof}

\begin{corollary}
\label{cor:stabaa}
If the stabilizer $\langle a, b\rangle_\xi$ is non-trivial, then there exists a unique point $\xi'$ in the orbit of $\xi$ such that $\langle a, b\rangle_{\xi'}$ is equal either to $\langle a\rangle$ or to $\langle b\rangle$.
\end{corollary}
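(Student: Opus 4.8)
The plan is to read everything off the subgroup lattice of $D_\infty=\langle a,b\rangle$ and the shape of the orbital graph $\Gamma_\xi$ provided by the Lemma. First I would note that minimality of the action on the Cantor set forces every orbit to be infinite (a finite orbit is closed and not dense), so the stabilizer $\langle a,b\rangle_\xi$ has infinite index in $D_\infty$. A short inspection of the subgroup structure of $D_\infty\cong\Z\rtimes\Z/2$ shows that its infinite-index subgroups are exactly the trivial subgroup and the order-two subgroups $\langle r\rangle$ generated by a single reflection $r$. Since $\langle a,b\rangle_\xi$ is assumed non-trivial, we conclude $\langle a,b\rangle_\xi=\langle r\rangle$ for some reflection $r$.

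For existence I would use that $D_\infty$ has exactly two conjugacy classes of reflections, distinguished by the parity of the exponent $m$ when the reflection is written as $(ab)^m a$, and that $a$ and $b$ represent these two classes. Pick $h\in D_\infty$ with $h^{-1}rh\in\{a,b\}$; then $\langle a,b\rangle_{h^{-1}\xi}=h^{-1}\langle r\rangle h=\langle h^{-1}rh\rangle$ is $\langle a\rangle$ or $\langle b\rangle$, so $\xi':=h^{-1}\xi$ works. Geometrically this says that $\Gamma_\xi$, which by the Lemma is a one- or two-ended chain, carries a loop at $\xi'$ (namely $a$ or $b$ fixes $\xi'$), hence is a one-ended chain, and $\xi'$ is its unique endpoint.

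For uniqueness I would argue by conjugation. Suppose $\xi''$ lies in the orbit of $\xi$ with $\langle a,b\rangle_{\xi''}\in\{\langle a\rangle,\langle b\rangle\}$, and write $\xi''=k\xi'$ for some $k\in D_\infty$, so that $\langle a,b\rangle_{\xi''}=k\langle a,b\rangle_{\xi'}k^{-1}$. Because $a$ and $b$ are not conjugate in $D_\infty$, the two stabilizers must be the \emph{same} one, say both $\langle a\rangle$; then $k$ normalizes $\langle a\rangle$, hence lies in the centralizer of the reflection $a$, which in $D_\infty$ is just $\langle a\rangle$ itself. Since $\langle a\rangle=\langle a,b\rangle_{\xi'}$ this gives $\xi''=k\xi'=\xi'$. (Equivalently, in the one-ended chain $\Gamma_\xi$ a vertex fixed by a generator is one carrying a loop, and a chain has a loop only at an endpoint, of which there is exactly one.)

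The only real content here is elementary bookkeeping about $D_\infty$ — its infinite-index subgroups, its two classes of reflections, and the fact that the centralizer of a reflection is the order-two subgroup it generates — combined with the already-proved fact (the Lemma) that the orbital graphs are chains; there is no serious obstacle. The one place to be careful is that uniqueness of $\xi'$ is asserted over \emph{both} possibilities $\langle a\rangle$ and $\langle b\rangle$ at once, and it is precisely the non-conjugacy of $a$ and $b$ in $D_\infty$ that rules out having one orbit point with stabilizer $\langle a\rangle$ and another with stabilizer $\langle b\rangle$.
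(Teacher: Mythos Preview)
Your proof is correct. The paper gives no explicit argument for this corollary, relying on the preceding Lemma that the orbital graphs are infinite chains; your write-up simply unpacks that observation, and your two formulations---the algebraic one via the classification of infinite-index subgroups of $D_\infty$, conjugacy classes of reflections, and the fact that the normalizer of $\langle a\rangle$ is $\langle a\rangle$ itself, and the geometric one via the unique endpoint of a one-ended chain---are exactly the intended content.
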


Let us show how minimality and expansivity conditions for $D_\infty$- and $\Z$-actions are related.

\begin{proposition}
Let $a$ and $b$ be homeomorphisms of period two of a Cantor set $\X$.
If the action the dihedral group $\langle a, b\rangle$ is minimal, then either the action of $\langle ab\rangle$ is minimal, or $\X$ is split into a disjoint union of two clopen $\langle ab\rangle$-invariant sets $S_1, S_2$ such that the action of $\langle ab\rangle$ on  each of these sets is minimal, and $a(S_1)=b(S_1)=S_2$, $a(S_2)=b(S_2)=S_1$.
\end{proposition}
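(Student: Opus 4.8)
Put $D:=\langle a,b\rangle$ and $t:=ab$. The plan is to exploit that $\langle t\rangle=\langle ab\rangle$ is a normal subgroup of index at most two in $D$. From $a^2=b^2=\unit$ one reads off the relations $at=b$, $ta=at^{-1}$, $ata=t^{-1}$ and $ba=t^{-1}$; in particular every element of $D$ conjugates $t$ to $t^{\pm1}$, so $\langle t\rangle\trianglelefteq D$ and $D=\langle t\rangle\cup\langle t\rangle a$. (The degenerate case $a\in\langle t\rangle$ cannot occur: it would make $D$ abelian with every orbit of size at most two, impossible for a minimal action on a Cantor set — but in fact only the decomposition $D=\langle t\rangle\cup\langle t\rangle a$ will be used.) If the action of $\langle t\rangle$ on $\X$ is already minimal we are in the first alternative and there is nothing to prove, so assume it is not: fix a point $\zeta\in\X$ whose $\langle t\rangle$-orbit is not dense, and set $Y:=\overline{\langle t\rangle\zeta}$, a proper nonempty closed subset of $\X$.

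First I would check that $Y$ and $a(Y)$ are both closed and $\langle t\rangle$-invariant; invariance of $a(Y)$ comes from $t\,a(Y)=(at^{-1})(Y)=a(Y)$ (using $ta=at^{-1}$ and $t^{-1}(Y)=Y$). Using $b=at$ one then gets $b(Y)=a(t(Y))=a(Y)$, $b(a(Y))=(ba)(Y)=t^{-1}(Y)=Y$, and trivially $a(a(Y))=Y$. Hence the closed set $Y\cup a(Y)$ is invariant under $a$ and under $b$, so it is $D$-invariant; since a nonempty closed $D$-invariant subset of $\X$ must be all of $\X$ by minimality, we conclude $Y\cup a(Y)=\X$.

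The one step that is not pure bookkeeping is the disjointness $Y\cap a(Y)=\emptyset$, which I would prove by the same trick: $Z:=Y\cap a(Y)$ is closed and $\langle t\rangle$-invariant (intersection of $\langle t\rangle$-invariant sets), and $a(Z)=a(Y)\cap Y=Z$ while $b(Z)=a(t(Z))=a(Z)=Z$, so $Z$ is a closed $D$-invariant set. By minimality $Z=\emptyset$ or $Z=\X$, and $Z=\X$ would give $Y=\X$, contradicting the choice of $\zeta$. Therefore $\X=Y\sqcup a(Y)$, so $S_1:=Y$ and $S_2:=a(Y)$ are complementary; being complements of closed sets they are clopen, they are nonempty and $\langle t\rangle$-invariant, and the relations already recorded give $a(S_1)=S_2=b(S_1)$ and $a(S_2)=S_1=b(S_2)$.

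It remains to see that $\langle t\rangle$ acts minimally on each $S_i$. For $\eta\in S_1$ the $D$-orbit decomposes as $D\eta=\langle t\rangle\eta\cup\langle t\rangle(a(\eta))$, with $\langle t\rangle\eta\subseteq S_1$ and $\langle t\rangle(a(\eta))\subseteq S_2$, so $D\eta\cap S_1=\langle t\rangle\eta$. Since $D\eta$ is dense in $\X$ and $S_1$ is open, $D\eta\cap S_1$ is dense in $S_1$, i.e.\ $\langle t\rangle\eta$ is dense in $S_1$; the same argument (or conjugating by $a$) handles $S_2$. I expect no genuine obstacle here — everything after the disjointness claim is routine, relying only on the dihedral relations and the two elementary facts that a nonempty closed $D$-invariant set equals $\X$ and that a dense set meets every nonempty open set in a subset dense there.
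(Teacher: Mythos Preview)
Your proof is correct and follows essentially the same approach as the paper: take a closed nonempty $\langle ab\rangle$-invariant set, observe that its image under $a$ is also closed and $\langle ab\rangle$-invariant, and apply $D$-minimality to the union and intersection. The paper's version is terser---it works with an arbitrary closed $\langle ab\rangle$-invariant set $A$ rather than a specific orbit closure, which implicitly yields minimality on the pieces (any smaller closed invariant subset would produce a second partition)---whereas you choose a particular $Y$ and then give a separate, explicit argument for minimality on each $S_i$; both routes are fine.
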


In particular, if the action of $D_\infty$ is non-free (has non-trivial stabilizers of some points), then the $D_\infty$-minimality is equivalent to the
$\Z$-minimality.

\begin{proof}
Suppose that the $\langle a, b\rangle$-action is minimal.
If $A\subset\X$ is a closed non-empty $\langle ab\rangle$-invariant set, then $a(A)$ is also a closed $\langle ab\rangle$-invariant set (since $(ab)a(A)=a(ba)A=a(ab)^{-1}(A)=a(A)$). It follows that $a(A)\cap A$ and $a(A)\cup A$ are closed and $\langle a, b\rangle$-invariant.
Consequently, $a(A)\cup A=\X$, and
either $a(A)\cap A=\X$, or $a(A)\cap A=\emptyset$, which finishes the proof.
\end{proof}

\begin{proposition}
\label{pr:Zsubshift}
Let $a$ and $b$ be homeomorphisms of period two of a Cantor set $\X$.
Suppose that they generate an
expansive action of $D_\infty$. Then there exists
a finite alphabet $A$, a permutation $\iota:A\arr A$ such that $\iota^2=\unit$, and a $\Z$-subshift
$\mathcal{S}\subset A^{\Z}$ such that there exists a homeomorphism
$\X\arr\mathcal{S}$ conjugating the action of the generators $a$
and $b$ with the homeomorphisms of $\mathcal{S}$ given by the
formulas:
\[a(w)(n)=\iota(w(-n)),\qquad b(w)(n)=\iota(w(1-n))\]
for every $w\in\mathcal{S}$ and $n\in\Z$.
\end{proposition}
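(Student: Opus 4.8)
The plan is to realize $\X$ as a $\Z$-subshift by coding the orbits of the index-two cyclic subgroup $\langle ab\rangle\le D_\infty$, choosing the coding partition so that the involution $a$ becomes a reflection of bi-infinite sequences. Put $t=ab$; since $\langle a,b\rangle=D_\infty$ the element $t$ has infinite order and $\langle t\rangle$ is normal of index two, with $t^na=at^{-n}$ (as $ata=t^{-1}$ in $D_\infty$) and $t^nb=at^{1-n}$ (as $b=at$) for all $n\in\Z$, while $D_\infty=\{t^n:n\in\Z\}\cup\{at^n:n\in\Z\}$. The one point that needs care is that the coding partition must be taken invariant under $a$.

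First I would fix an expansivity constant $\delta>0$ for the $D_\infty$-action, choose any finite clopen partition $\mathcal{Q}$ of $\X$ into pieces of diameter less than $\delta$, and replace it by the common refinement $\mathcal{P}$ of $\mathcal{Q}$ and $a(\mathcal{Q})=\{a(Q):Q\in\mathcal{Q}\}$, whose nonempty pieces are the sets $Q\cap a(Q')$ with $Q,Q'\in\mathcal{Q}$. Then $\mathcal{P}$ is again a finite clopen partition into pieces of diameter less than $\delta$, and since $a^2=\unit$ one has $a(Q\cap a(Q'))=Q'\cap a(Q)$, so $a$ permutes the pieces of $\mathcal{P}$. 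Let $A$ be the finite set of pieces of $\mathcal{P}$, let $\ell:\X\arr A$ take each point to the piece containing it, and let $\iota:A\arr A$ be the permutation of pieces induced by $a$; then $\iota^2=\unit$ and $\ell\circ a=\iota\circ\ell$. Define $\Phi:\X\arr A^{\Z}$ by $\Phi(\zeta)(n)=\ell(t^n(\zeta))$. Each coordinate of $\Phi$ is locally constant, hence $\Phi$ is continuous; and $\Phi$ is injective, because if $\Phi(\zeta_1)=\Phi(\zeta_2)$ then for every $n$ the points $t^n(\zeta_1)$ and $t^n(\zeta_2)$ lie in one piece of $\mathcal{P}$, hence so do $at^n(\zeta_1)$ and $at^n(\zeta_2)$ (as $a$ maps pieces to pieces), so $d(g(\zeta_1),g(\zeta_2))<\delta$ for every $g\in D_\infty$ and expansivity forces $\zeta_1=\zeta_2$. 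Therefore $\Phi$ is a homeomorphism onto its image $\mathcal{S}:=\Phi(\X)$, which is compact, hence closed in $A^{\Z}$; since $\Phi(t(\zeta))(n)=\Phi(\zeta)(n+1)$ and $t$ is onto, $\mathcal{S}$ is shift-invariant, so $\mathcal{S}$ is a $\Z$-subshift.

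Finally I would read the generators through $\Phi$. Using $t^na=at^{-n}$ and $\ell\circ a=\iota\circ\ell$,
\[\Phi(a(\zeta))(n)=\ell(t^na(\zeta))=\ell(at^{-n}(\zeta))=\iota(\ell(t^{-n}(\zeta)))=\iota(\Phi(\zeta)(-n)),\]
and using $t^nb=at^{1-n}$,
\[\Phi(b(\zeta))(n)=\ell(t^nb(\zeta))=\ell(at^{1-n}(\zeta))=\iota(\ell(t^{1-n}(\zeta)))=\iota(\Phi(\zeta)(1-n)).\]
The right-hand sides define continuous involutions of $A^{\Z}$, and these preserve $\mathcal{S}$ because $a$ and $b$ preserve $\X$ and $\Phi$ is a bijection onto $\mathcal{S}$; so $\Phi$ is the required conjugacy. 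I do not expect any serious obstacle here: the only subtle point is the one flagged above, namely that $\mathcal{P}$ must be chosen $a$-invariant, for this is simultaneously what makes $a$ act as an honest reflection of sequences and what lets the injectivity argument upgrade control of $\langle t\rangle$-orbits to control of full $D_\infty$-orbits.
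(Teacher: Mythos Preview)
Your proof is correct and follows essentially the same approach as the paper's: both refine an expansivity partition to make it $a$-invariant, code points by the $\langle ab\rangle$-itinerary, and compute the action of $a$ and $b$ using the dihedral relations $(ab)^na=a(ba)^n$ and $(ab)^nb=a(ba)^{n-1}$. The only cosmetic difference is that the paper phrases expansivity as ``itineraries determine points'' while you phrase it via the $\delta$-constant and verify injectivity explicitly; your treatment is slightly more detailed but the argument is the same.
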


Recall, that a \emph{subshift} is a closed $\Z$-invariant subset of $A^{\Z}$.

\begin{proof}
There exists a partition $\mathcal{U}=\{U_1,
U_2, \ldots, U_n\}$ of $\X$ into clopen sets such that every
point $\zeta\in\X$ is uniquely determined by its \emph{itinerary},
which is defined as the map $I_\zeta:D_\infty\arr\mathcal{U}$ given by the
condition $I_\zeta(g)\ni g(\zeta)$. We may assume that $\mathcal{U}$ is
$a$-invariant, i.e., that for every $U\in\mathcal{U}$ the set $a(U)$
belongs to $\mathcal{U}$. Otherwise, we can replace $\mathcal{U}$ by
the partition induced by $\mathcal{U}$ and $a(\mathcal{U})$: two
points $\zeta_1, \zeta_2$ belong to one piece of the induced partition
if and only if they belong to one piece of $\mathcal{U}$ and to one
piece of $a(\mathcal{U})$.

Then for every $\zeta\in\X$ and $g\in D_\infty$ we have
$I_\zeta(g)=a(I_\zeta(ag))$,
 so that $I_\zeta$, and hence $\zeta$, are
uniquely determined by the sequence $I_\zeta((ab)^n)$, $n\in\Z$. Let
us denote $J_\zeta(n)=I_\zeta((ab)^n)$.

The set of sequences of the form $J_\zeta(n)$ is obviously
a closed shift-invariant subset of the full shift $\mathcal{U}^{\Z}$.

Let us describe the action of $a$ and $b$ on the sequences
$J_\zeta(n)$. We have
$J_{a(\zeta)}(n)=I_\zeta((ab)^na)=I_\zeta(a(ba)^n)=a(I_\zeta((ba)^n))=a(J_\zeta(-n))$
and
$J_{b(\zeta)}(n)=I_\zeta((ab)^nb)=I_\zeta(a(ba)^{n-1})=a(I_\zeta((ba)^{n-1})=a(J_\zeta(1-n))$.
We can therefore define the permutation $\iota$ of $\mathcal{U}$ equal to the action of $a$ on $\mathcal{U}$.
\end{proof}

If the permutation $\iota$ in Proposition~\ref{pr:Zsubshift} is trivial, then the transformations $a$ and $b$ are ``central symmetries'' of the infinite sequences from the subshift $\mathcal{A}$. The transformation $a$ flips the sequences around the zeroth letter, while $b$ flips them around the space between the zeroth and first letters. The subshift $\mathcal{S}$ has to be invariant under $a$ (and then it will be invariant under $b$). Such subshifts are called \emph{palindromic}. A minimal subshift is palindromic if and only if a sequence $w\in\mathcal{S}$ contains arbitrary long palindromes as subwords. So, every palindromic minimal subshift is associated with a natural minimal expansive action of $D_\infty$. The general case is not far from palindromic subshifts, since we can replace letters $x\in A$ such that $\iota(x)\ne x$ by words of length two: $x=x_-x_+$, $\iota(x)=x_+x_-$, so that $a$ is again a central symmetry.

\begin{example}
\label{ex:ThueMorse}
Let $\tau$ be the substitution (i.e., an endomorphism of the free
monoid $\{\zero, \one\}^*$) given by:
\[\tau:\zero\mapsto \zero\one,\qquad \one\mapsto \one\zero.\]
The words $\tau^n(\zero)$ converge to an infinite sequence
$\zero\one\one\zero\one\zero\zero\one\one\zero\zero\one\zero\one\one\zero\ldots$
called the \emph{Thue-Morse sequence}. Let $\mathcal{S}$ be the set of all
bi-infinite sequences $w=\ldots x_{-1}x_0x_1\ldots$ such that every
subword of $w$ is a subword of $\lim_{n\to\infty}\tau^n(\zero)$. It is
known that $\mathcal{S}$ is a minimal subshift (see~\cite[Example~10.9.3]{allouchshallit:sequences}).

Note that the words $\tau^2(\zero)$ and $\tau^2(\one)$ are palindromes:
\[\tau^2(\zero)=\zero\one\one\zero,\qquad\tau^2(\one)=
\one\zero\zero\one.\]
It follows by induction that $\tau^{2n}(\zero)$ and $\tau^{2n}(\one)$ are
palindromes for all $n\ge 1$. Consequently, the shift $\mathcal{S}$ is
palindromic, and the central symmetries $a$ and $b$ around the position number 0 and the space between positions number 0 and 1 generate a minimal expansive action of $D_\infty$.
\end{example}

\begin{example}
Consider the alphabet $\alb=\{\one, \one^*, \two, \two^*\}$, the involution $\iota:x\leftrightarrow x^*$, $x\in\{\one, \two\}$, and the substitution
\[\tau:\one\mapsto\two,\quad\one^*\mapsto\two^*,%
\quad\two\mapsto\one^*\two^*,\quad\two^*\mapsto\two\one.\]
We have $\iota\circ\tau=\tau\circ\iota$ on $\xs$, where $\iota(x_1x_2\ldots x_n)=\iota(x_n)\iota(x_{n-1})\ldots\iota(x_1)$.

Let $\mathcal{S}\subset\alb^{\Z}$ be the subshift generated by $\tau$. We have $\tau^3(\one)=\two^*\two\one$. Since $\iota$ commutes with $\tau$, all words $\tau^n(\two^*\two)$ are $\iota$-invariant:
\[\two^*|\two,\quad\two\one|\one^*\two^*,\quad \one^*\two^*\two|\two^*\two\one,\quad\ldots\]

It follows that the shift $\mathcal{S}$ is invariant under the transformations $a$ and $b$ defined as in Proposition~\ref{pr:Zsubshift}.
\end{example}

\subsection{Odometer actions}
\label{ss:odometer}

In some sense the opposite condition to expansiveness is \emph{residual
  finiteness} of the action. We say that an action of a group $G$ on a
Cantor set $\X$ is residually finite if the $G$-orbit of every clopen
subset of $\X$ is finite. An action is residually finite if and only
if there exists a homeomorphism $\Phi:\X\arr\partial T$ of $\X$ with
the boundary of a locally finite rooted tree $T$ and an action of $G$ on $T$ by
automorphisms such that $\Phi$ is $G$-equivariant (with respect to the
action of $G$ on $\partial T$ induced by the action on $T$),
see~\cite[Proposition~6.4]{grineksu_en}.

Every minimal residually finite action of $\Z$ on a Cantor set is topologically conjugate to an \emph{odometer}, i.e., the transformation $\alpha:x\mapsto x+1$ on the projective limit $\overline{\Z}$ of a sequence $\Z/(d_1d_2\ldots d_n)\Z$ of finite cyclic groups.

\begin{proposition}
Consider a minimal residually finite action of the dihedral group $D_\infty$ on a Cantor set $\X$. Suppose that the restriction of the action to the infinite cyclic subgroup of $D_\infty$ is minimal. Then there exists a homeomorphism of $\X$ with a projective limit $\overline{\Z}$ of finite cyclic groups conjugating the action of $D_\infty$ with the action generated by the homeomorphisms
\[a(x)=1-x,\qquad b(x)=-x.\]
\end{proposition}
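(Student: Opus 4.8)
The plan is to combine the general characterization of minimal residually finite actions (topological conjugacy of a $\Z$-action with an odometer) with the specific structure of the dihedral group $D_\infty=\langle a,b\rangle$, where $b=a\cdot(ab)$ and $ab$ generates the cyclic subgroup. First I would invoke the hypothesis that the restriction of the $D_\infty$-action to $\langle ab\rangle\cong\Z$ is minimal, together with residual finiteness, to obtain a homeomorphism $\Phi:\X\arr\overline{\Z}$ conjugating $ab$ to the odometer $\alpha:x\mapsto x+1$ on $\overline{\Z}=\varprojlim\Z/(d_1\cdots d_n)\Z$. (Residual finiteness passes to the subgroup $\langle ab\rangle$ since the $\langle ab\rangle$-orbit of a clopen set is contained in its $D_\infty$-orbit, which is finite.) This fixes the conjugacy up to the choice of which point of $\overline{\Z}$ corresponds to a chosen base point; I would normalize $\Phi$ so that the unique fixed point of $b$ — which exists and is unique in each orbit by Corollary~\ref{cor:stabaa}, and then by minimality and continuity is a single point of $\X$ — is sent to $0\in\overline{\Z}$.

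Next I would determine the conjugated action of $a$. Since $a(ab)a^{-1}=(ab)^{-1}$, the homeomorphism $\bar a=\Phi a\Phi^{-1}$ of $\overline{\Z}$ must satisfy $\bar a\circ\alpha\circ\bar a^{-1}=\alpha^{-1}$, i.e.\ $\bar a(x+1)=\bar a(x)-1$ for all $x\in\overline{\Z}$. Hence $x\mapsto \bar a(x)+x$ is fixed by $\alpha$... more precisely $\bar a(x)+x$ is constant along $\alpha$-orbits; since the $\alpha$-orbit of $0$ is dense and $\bar a$ is continuous, $\bar a(x)+x$ is globally constant, say equal to $c$, so $\bar a(x)=c-x$. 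The involutivity $\bar a^2=\unit$ is automatic. Now $b=(ab)^{-1}a$ in the sense that $\bar b=\Phi b\Phi^{-1}$ satisfies $\bar b=\alpha^{-1}\bar a$ (from $b=(ab)^{-1}a$, using left actions: actually $\bar b(x)=\alpha^{-1}(\bar a(x))=c-x-1$, or one computes directly $\bar b\circ\alpha\circ\bar b^{-1}=\alpha^{-1}$ and $\bar b$ differs from $\bar a$ by an $\alpha$-power). I would pin down $c$ using the normalization: $b$ fixes the point sent to $0$, so $\bar b(0)=0$, giving $c=1$, hence $\bar b(x)=-x$ and $\bar a(x)=1-x$, exactly as claimed.

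The main obstacle I expect is the normalization step — ensuring that there really is a canonical base point, and that after translating $\Phi$ the constant $c$ comes out to $1$ rather than an arbitrary element of $\overline{\Z}$. This requires knowing that $b$ (or $a$) has a fixed point: by Corollary~\ref{cor:stabaa}, since the action is non-free (an odometer-type action of $D_\infty$ on a Cantor set cannot be free, as $\bar a(x)=c-x$ always has a fixed point in $\overline{\Z}$ once $\overline{\Z}$ is $2$-divisible in the relevant sense, or more robustly: a minimal $D_\infty$-action that is residually finite cannot be free because free $D_\infty$-actions are not residually finite as actions), some stabilizer is nontrivial, and Corollary~\ref{cor:stabaa} then gives a point whose stabilizer is exactly $\langle b\rangle$; minimality forces this point to be unique in $\X$ (two such points would give two disjoint closed invariant structures). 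Translating $\Phi$ by $-\Phi(\text{that point})$ moves it to $0$ and, since translations by elements of $\overline{\Z}$ are exactly the automorphisms commuting with $\alpha$, this does not disturb the conjugation of $ab$ to $\alpha$; tracking how the constant transforms under this translation yields $c=1$. The remaining verifications (that $\bar a,\bar b$ are homeomorphisms, involutions, and generate a copy of $D_\infty$ acting minimally — the last being immediate since conjugation preserves minimality) are routine.
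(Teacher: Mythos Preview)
Your approach is essentially the same as the paper's: conjugate $ab$ to the odometer $\alpha$, observe that any involution conjugating $\alpha$ to $\alpha^{-1}$ must be an affine map $x\mapsto c-x$ (you argue this by noting $\bar a(x)+x$ is $\alpha$-invariant, the paper by comparing with the model involution $x\mapsto -x$ and showing the difference is a translation), then translate a fixed point to $0$ to normalize $c$.

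The one place where your argument is weaker than the paper's is the existence of the fixed point. You invoke Corollary~\ref{cor:stabaa}, but that corollary \emph{presupposes} a nontrivial stabilizer, and your justification of non-freeness is hand-wavy (neither ``$2$-divisibility in the relevant sense'' nor ``free $D_\infty$-actions are not residually finite'' is argued). The paper handles this cleanly and directly: once $\bar a(x)=c-x$ and $\bar b(x)=c-1-x$, a fixed point of $\bar a$ or $\bar b$ is a solution of $2x=c$ or $2x=c-1$ in $\overline\Z$; if all moduli are odd then $2$ is invertible and both equations solve, otherwise $2\overline\Z$ has index $2$ and exactly one of $c,\,c-1$ lies in it. Either way at least one of $a,b$ has a fixed point, and the paper then says ``assume it is $b$'' (implicitly relabeling if necessary). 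Note also that the fixed point need not be unique (there may be nontrivial $2$-torsion in $\overline\Z$), but uniqueness is not needed: translating \emph{any} fixed point of $b$ to $0$ works.
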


\begin{proof}
The homeomorphism $\alpha$ generating the infinite cyclic subgroup of $D_\infty$ is an odometer, i.e., is conjugate to the action of $x\mapsto x+1$ on some projective limit $\overline{\Z}$ of finite cyclic groups.

Let $a$ and $b$ be the generators of $D_\infty$.
We have $b\alpha b=\alpha^{-1}$. If $b_1, b_2\in D_\infty$ are such that $b_i\alpha b_i=\alpha^{-1}$, then $b_1b_2$ commutes with $\alpha$. By continuity and minimality of the action of $\alpha$ on $\overline{\Z}$, the homeomorphism $b_1b_2$ commutes with every transformation of the form $x\mapsto x+h$ for $h\in\overline{\Z}$. It follows that $b_1b_2(h)=b_1b_2(h+0)=h+b_1b_2(0)$ for every $h$, i.e., that $b_1b_2$ is of the form $x\mapsto x+g$ for some $g\in\overline{\Z}$.

The transformation $b_0(x)=-x$ satisfies $b_0\alpha b_0=\alpha^{-1}$. It follows that every involution $b$ such that $b\alpha b=\alpha^{-1}$ is of the form $b(x)=-x+g$ for some $g\in\overline{\Z}$. Then $a=\alpha b$ is given by $a(x)=-x+g+1$.

If all cyclic groups $d_1d_2\cdots d_n\Z$ in the projective limit have odd order, then the equation $2x=g$ has a solution in $\overline{\Z}$ for every $g\in\overline{\Z}$. Otherwise, either the equation $2x=g$, or the equation $2x=g+1$ has a solution. It follows that in the odd case both involutions $a, b$ have a fixed point, while in the even case exactly one of them has a fixed point.

Let us assume that $b$ has a fixed point $\xi\in\overline{\Z}$. Then, conjugating everything by the shift $x\mapsto x-\xi$, we may assume that $\xi=0$. Then $a:x\mapsto -x+1$ and $b:x\mapsto -x$.
\end{proof}

For example, if $\X$ is the ring of diadic integers, i.e., the projective limit of the cyclic groups $\Z/2^n\Z$, then the corresponding action of $a:x\mapsto 1-x$ and $b:x\mapsto -x$ is conjugate to the following action on the space $\{\zero, \one\}^\omega$ of right-infinite binary sequences:
\begin{alignat*}{2}
a(\zero w) &=\one w, &\qquad a(\one w)&=\zero w,\\
b(\zero w) &=\zero a(w), &\qquad b(\one w)&=\one b(w).
\end{alignat*}
Figure~\ref{fig:gray} shows the corresponding action on the binary rooted tree.

\begin{figure}
\centering
\includegraphics{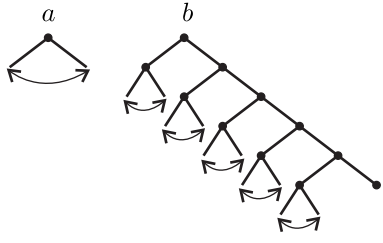}
\caption{The Gray code action of $D_\infty$}
\label{fig:gray}
\end{figure}

This particular realization corresponds to the Gray code (see~\cite[Chapter~1]{wilf:algorithms}). It is also the standard self-similar action of the iterated monodromy group of the Chebyshev polynomial $T_2=2x^2-1$. The iterated monodromy group of the degree $d$ Chebyshev polynomial is conjugate to the natural action of $D_\infty$ on the ring $\lim_{\leftarrow}\Z/d^n\Z$ of $d$-adic integers. For their standard self-similar actions, see~\cite[Proposition~6.12.6]{nek:book}.

\section{Fragmentation of dihedral groups}
\label{s:fragmentations}

\begin{defi}
Let $a$ be a homeomorphism of period two of a Cantor set $\X$. A \emph{fragmentation} of $a$ is a finite group $A$ of homeomorphisms of $\X$ such that for every $h\in A$ and $\zeta\in\X$ either $h(\zeta)=\zeta$ or $h(\zeta)=a(\zeta)$, and for every $\zeta\in\X$ there exists $h\in A$ such that $h(\zeta)=a(\zeta)$.
\end{defi}

Note that if $h$ is an element of a fragmentation $A$, then the sets $E_{h, \unit}=\{\zeta\in\X\;:\;h(\zeta)=\zeta\}$ and $E_{h, a}=\{\zeta\in\X\;:\;h(\zeta)=a(\zeta)\}$ are closed, $a$-invariant, their intersection is a set of fixed point of $a$, and we have $E_{h, \unit}\cup E_{h, a}=\X$. If the set of fixed points of $a$ has empty interior (e.g., if it is an element of a generating set of a minimal action of $D_\infty$), then the interiors of $E_{h, \unit}$ and $E_{h, a}$ are disjoint.

Suppose that the set of fixed points of $a$ has empty interior.
Choose for every $h\in A$ a set $Q_h$ either equal to the interior of $E_{h, \unit}$ or to the interior of $E_{h, a}$, and consider their intersection. Let $\mathcal{P}$ be the set of all intersections that can be obtained this way. The set $\mathcal{P}$ has the following properties:
\begin{enumerate}
\item $\mathcal{P}$ is finite;
\item the elements of $\mathcal{P}$ are $a$-invariant, open, and pairwise disjoint;
\item $\bigcup_{P\in\mathcal{P}}\overline P=\X$;
\item for all $P_1, P_2\in\mathcal{P}$ such that $P_1\ne P_2$ the set $\overline{P_1}\cap\overline{P_2}$ consists of fixed points of $a$;
\item for every $P\in\mathcal{P}$ and $h\in A$, either $P\subset E_{h, \unit}$, or $P\subset E_{h, a}$.
\end{enumerate}

We call the elements of $\mathcal{P}$ the \emph{pieces} of the fragmentation $A$. Every piece $P\in\mathcal{P}$ defines an epimorphism $\pi_P:A\arr\Z/2\Z$ by the rule:
\[\pi_P(h)=\left\{\begin{array}{rl} 0 & \text{if $P\subset E_{h, \unit}$,}\\
1  & \text{if $P\subset E_{h, a}$.}\end{array}\right.\]
In other words, $\pi_P(h)=1$ if $h$ acts on $P$ as $a$, and $\pi_P(h)=0$ if $h$ acts on $P$ as the identity. The map $(\pi_P)_{P\in\mathcal{P}}$ defines an embedding of $A$ into $(\Z/2\Z)^{\mathcal{P}}$. The group $(\Z/2\Z)^{\mathcal{P}}$ is, therefore, the maximal fragmentation of $a$ with the set of pieces $\mathcal{P}$. Any subgroup $A\le(\Z/2\Z)^{\mathcal{P}}$ such that all homomorphisms $\pi_P:A\arr\Z/2\Z$ are surjective is a fragmentation with the set of pieces $\mathcal{P}$.

Our main subject are fragmentations with purely non-Hausdorff singularities, see Definition~\ref{def:singularities}. Every non-free minimal action of $D_\infty$ can be fragmented so that we get a purely non-Hausdorff singularity in the following way.

\begin{lemma}
\label{lem:partition} Suppose that $a$ has a fixed point $\xi$. Then
for every $n\ge 1$ there exists a partition of $\X\setminus\{\xi\}$ into
a disjoint union of open $a$-invariant subsets $P_1, P_2, \ldots, P_n$
such that each set $P_i$ accumulates on $\xi$.
\end{lemma}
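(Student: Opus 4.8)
The plan is to build the partition using a basis of clopen sets around $\xi$. Since $\X$ is a Cantor set, fix a decreasing neighborhood basis $\X = V_0 \supsetneq V_1 \supsetneq V_2 \supsetneq \cdots$ of $\xi$ consisting of clopen sets with $\bigcap_k V_k = \{\xi\}$. I want these to be $a$-invariant: since $a$ is a homeomorphism of order two fixing $\xi$, the sets $V_k \cap a(V_k)$ form a decreasing clopen basis of $a$-invariant neighborhoods of $\xi$, so after replacing $V_k$ by $V_k \cap a(V_k)$ I may assume each $V_k$ is clopen, $a$-invariant, and $\bigcap_k V_k = \{\xi\}$. Then the ``annuli'' $A_k = V_{k} \setminus V_{k+1}$ for $k \ge 0$ are clopen, $a$-invariant (being a difference of $a$-invariant sets), pairwise disjoint, and $\bigsqcup_{k \ge 0} A_k = \X \setminus \{\xi\}$. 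Each $A_k$ is itself a clopen subset of a Cantor set, hence either empty or a Cantor set (or a nonempty finite/clopen piece); infinitely many $A_k$ are nonempty since the $V_k$ strictly decrease, and we may discard or merge the empty ones so as to assume all $A_k$ are nonempty.

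Next I would distribute the annuli among the $n$ parts so that each part still accumulates on $\xi$. Partition the index set $\{k : A_k \neq \emptyset\}$, which I may take to be all of $\mathbb{N}$ after the merging above, into $n$ infinite subsets $N_1, N_2, \ldots, N_n$ (e.g., $N_i = \{k \ge 0 : k \equiv i \pmod n\}$), and set
\[ P_i = \bigcup_{k \in N_i} A_k. \]
Each $P_i$ is open (a union of clopen sets), $a$-invariant (a union of $a$-invariant sets), the $P_i$ are pairwise disjoint since the $A_k$ are, and $\bigcup_{i=1}^n P_i = \bigsqcup_{k \ge 0} A_k = \X \setminus \{\xi\}$. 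Finally, since $N_i$ is infinite, $P_i$ contains points of $A_k = V_k \setminus V_{k+1}$ for arbitrarily large $k$, and such points lie in $V_k$, an arbitrarily small neighborhood of $\xi$; hence $\xi \in \overline{P_i}$ and in fact $P_i$ accumulates on $\xi$. This is exactly the required partition.

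The only genuine point needing care is the reduction to $a$-invariant clopen neighborhoods and the guarantee that infinitely many annuli are nonempty (so that the $n$ parts can each grab infinitely many of them); both follow from $\X$ being a Cantor set together with $a$ being an involution fixing $\xi$. I do not expect a real obstacle here — the statement is a soft topological fact — but the cleanest presentation is to first extract the $a$-invariant clopen basis, then form the annuli, then spread them cyclically among the $n$ classes. One should also note explicitly that nothing in the statement requires the $P_i$ to be $D_\infty$-invariant or to respect the fragmentation; it is purely a statement about $a$, so only $a$-invariance is needed, which is why taking differences and unions of $a$-invariant clopen sets suffices.
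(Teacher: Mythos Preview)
Your proof is correct and follows essentially the same approach as the paper: take a descending clopen neighborhood basis of $\xi$, intersect with the $a$-images to make the sets $a$-invariant, pass to the successive differences (annuli), and distribute these among $n$ infinite index classes. The paper handles possible non-strict inclusions by ``removing repetitions'' where you ``merge empty annuli''; these are the same maneuver, and the rest is identical.
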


\begin{proof}
Let $U_k, k\ge 0$, be
a descending sequence of clopen neighborhoods of $\xi$ such that
$U_0=\X$ and
$\bigcap_{k\ge 0}U_k=\{\xi\}$. 

Then $V_k=U_k\cap a(U_k)$ is a
descending sequence of clopen $a$-invariant neighborhoods of $\xi$ such
that $\bigcap_{k\ge 1}V_k=\{\xi\}$. Remove all repetitions, so that
$V_k\ne V_{k+1}$ for every $k$.

Choose an arbitrary partition of the set of non-negative integers into
$n$ disjoint infinite subsets $I_1, I_2, \ldots, I_n$, and define
$P_i=\bigcup_{k\in I_i}V_k\setminus V_{k+1}$.
\end{proof}

Suppose that $\mathcal{P}=\{P_1, P_2, \ldots, P_n\}$ is as in Lemma~\ref{lem:partition}. Choose a subgroup $A\le(\Z/2\Z)^{\mathcal{P}}$ such that each homomorphism $\pi_{P_i}:A\arr\Z/2\Z$ is surjective, but there is no element $h\in A$ such that $\pi_{P_i}(h)=1$ for all $P_i$. Then $A$ is a fragmentation of $a$ such that $\xi$ is a purely non-Hausdorff singularity. It is always possible to choose such $A$ if $n\ge 3$. For example, for $n=3$ such a subgroup of $(\Z/2\Z)^3$ is $\{(0, 0, 0), (1, 1, 0), (1, 0, 1), (0, 1, 1)\}$. 

\begin{example}
\label{ex:Grigorchuk}
Consider the space $\{\zero, \one\}^\omega$ of right-infinite sequences $x_1x_2\ldots$ over the
binary alphabet $\{\zero, \one\}$. Consider the Gray code transformations $a$ and $b$, as defined in~\ref{ss:odometer}.

The homeomorphism $b$ has a
unique fixed point $\xi=\one\one\one\ldots$.
The sets $W_n=\underbrace{\one\one\ldots \one}_{\text{$n$ times}}\zero\{\zero,
\one\}^\omega$ of sequences starting with exactly $n$ ones form a
partition of $\{\zero, \one\}^\omega\setminus\{\xi\}$ into open
$b$-invariant subsets.

Consider the partition $P_0=\bigcup_{k=0}^\infty W_{3k}$,
$P_1=\bigcup_{k=0}^\infty W_{3k+1}$, $P_2=\bigcup_{k=0}^\infty
W_{3k+2}$ of $\{\zero, \one\}^\omega\setminus\{\xi\}$, and the subgroup
$B=\{b_1, b_2, b_3, \unit\}$, where $b_1$ acts as $b$ on $P_0\cup P_1$, $b_2$ acts as $b$ on $P_0\cup P_2$, and $b_3$ acts as $b$ on $P_1\cup P_2$. The group generated by $a$ and $B$ is the \emph{first Grigorchuk group}, introduced in~\cite{grigorchuk:80_en}. Its generators $a,
b_1, b_2, b_3$ are usually denoted $a, b, c, d$. See Figure~\ref{fig:grigorchuk} for a description of their action on the binary tree, where the boundary is naturally identified with the space $\{\zero, \one\}^\omega$.

\begin{figure}
\centering
\includegraphics{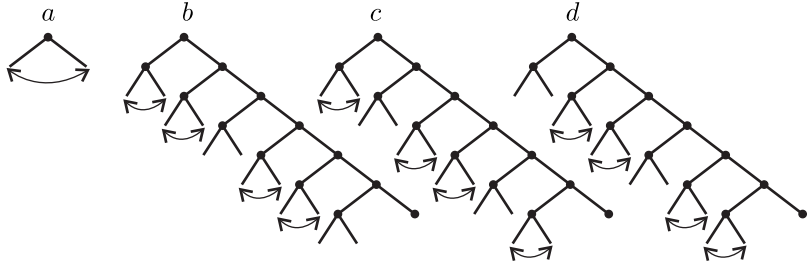}
\caption{The Grigorchuk group}
\label{fig:grigorchuk}
\end{figure}

Choosing
different sets $P_0, P_1, P_2$ equal to unions of the sets $W_k$, we
get all groups from the family of Grigorchuk groups $G_w$ studied
in~\cite{grigorchuk:growth_en}. If we choose other number of pieces in
a partition $\mathcal{P}$, then we get groups defined and studied
by Z.~\v Suni\'c in~\cite{sunic:hausdorff}.
\end{example}

\subsection{Orbital graphs of fragmented dihedral groups}
\label{ss:orbitalgraphs}

\begin{defi}
Let $a$ and $b$ be homeomorphisms of period two of a Cantor set $\X$.
A  \emph{fragmentation of the dihedral group $\langle a, b\rangle$} is the group generated by $A\cup B$, where $A$ and $B$ are fragmentations of the homeomorphisms $a$ and $b$, respectively.
\end{defi}

Let $G=\langle A\cup B\rangle$ be a fragmentation of a minimal action of a dihedral group $\langle a, b\rangle$. Denote by $\mathcal{P}_A$ and $\mathcal{P}_B$ the sets of pieces of the fragmentations $A$ and $B$.

For every $\zeta\in\X$ and $a'\in A$ we have $a'(\zeta)=a(\zeta)$ or
$a'(\zeta)=\zeta$. For every $\zeta\in\X$ there exists $a'\in A$ such
that $a'(\zeta)=a(\zeta)$. The same is true for $B$ and $b$. It follows that the orbital graphs of
$G$ are just ``decorated'' versions of the orbital graphs of
$D_\infty=\langle a, b\rangle$. 

Namely, if $\zeta_1, \zeta_2$ are two different vertices of an orbital graph of $\langle a, b\rangle$ connected by an edge labeled by $a$, then $\zeta_1, \zeta_2$ belong to one piece $P\in\mathcal{P}_A$. These vertices are connected in the orbital graph of $G$ by edges labeled by all elements $h\in A$ such that $\pi_P(h)=1$. We will sometimes represent such a multiple edge by $\edge{P}$. The analogous statement is true for the edges labeled by $b$. Thus, we replace the labels $a$ and $b$ of the orbital graph of the dihedral group by pieces of the respective fragmentation.
Note that all loops of an orbital graph can be reconstructed from the other edges.

See Figure~\ref{fig:grgraphs} where an orbital graph of
the Grigorchuk group and the corresponding orbital graph of the
dihedral group are shown. Note that the edges labeled by $b$ are replaced by multiple edges labeled by $\{b_1, b_2\}$, $\{b_3, b_1\}$, or $\{b_2, b_3\}$ (completed by the necessary loops). These sets of labels correspond to the pieces $P_0$, $P_1$, and $P_2$, respectively, see Example~\ref{ex:Grigorchuk}.

\begin{figure}
\centering
\includegraphics{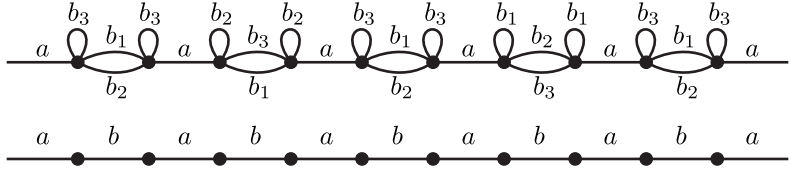}
\caption{The orbital graphs of the Grigorchuk group and the dihedral group}
\label{fig:grgraphs}
\end{figure}

A \emph{segment} $\Sigma$ is a finite connected subgraph of an orbital graph $\Gamma_\zeta$ such that if $v_1, v_2$ are adjacent vertices of $\Sigma$, then all edges of $\Gamma_\zeta$ connecting $v_1$ and $v_2$ belong to $\Sigma$. We \emph{do not}, however, include the loops of the edpoints of $\Sigma$ into the segment, for a technical reason.  

We will sometimes arbitrarily choose a direction (left/right) on a graph $\Gamma_\zeta$. Orientation of subsegments of $\Gamma_\zeta$ will be induced from the orientation of $\Gamma_\zeta$. If $\Sigma$ is an oriented segment, then we denote by $\Sigma^{-1}$ the segment with the opposite orientation. We denote by $|\Sigma|$ the length of $\Sigma$, i.e., the number of its vertices minus one.

By Corollary~\ref{cor:stabaa}, every stabilizer $\langle a, b\rangle_\zeta$ of a point $\zeta\in\X$ is conjugate to a stabilizer equal to $\langle a\rangle$ or $\langle b\rangle$. It follows that every stabilizer $G_\zeta$ of a singular point is conjugate to the stabilizer of a fixed point of $a$ or $b$.

\begin{lemma}
\label{lem:germsofxi}
Suppose that $\xi\in\X$ is a fixed point of $a$. 
The group of germs $G_\xi/G_{(\xi)}$ is naturally isomorphic to the subgroup $H_\xi$ of $A$ consisting elements $h$ fixing $\xi$ and acting non-trivially on every neighborhood of $\xi$. In other words the epimorphism $G_\xi\arr G_\xi/G_{(\xi)}$ induces an isomorphism $H_\xi\arr G_\xi/G_{(\xi)}$.
\end{lemma}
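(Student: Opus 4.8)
The plan is to analyze the stabilizer $G_\xi$ directly, using the fact that $G = \langle A \cup B\rangle$ is a fragmentation of a minimal dihedral action and that $\xi$ is a fixed point of $a$. First I would recall from Corollary~\ref{cor:stabaa} that in the dihedral group $\langle a, b\rangle$ the stabilizer of $\xi$ is exactly $\langle a\rangle$ (since $\xi$ is a fixed point of $a$, and by minimality it cannot also be fixed by $b$ or lie in the orbit of a point with a larger stabilizer). Now, any element $g \in G$ is a product of generators from $A \cup B$; if $g$ fixes $\xi$, then the image of $g$ in $D_\infty$ under the natural map $A \cup B \to \{a, b\}$ (sending each fragment of $a$ to $a$ and each fragment of $b$ to $b$) must lie in $\langle a\rangle$, i.e. must be $\unit$ or $a$. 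This is because the orbital graph of $G$ at $\xi$ covers the orbital graph of $D_\infty$ at $\xi$, which is a two-ended chain in which $\xi$ is a reflection point of the $a$-involution only. So every $g \in G_\xi$ acts on a neighborhood of $\xi$ either trivially (near $\xi$) or as $a$ (near $\xi$), meaning there is a well-defined "germ at $\xi$" value in $\{\unit, a\}$; but more is true — I claim the germ of $g$ at $\xi$ coincides with the germ of some single element $h \in A$.

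The key step is this germ-identification. Given $g \in G_\xi$, write $g = s_k \cdots s_1$ with $s_i \in A \cup B$. Track the trajectory $\xi, s_1(\xi), s_2 s_1(\xi), \ldots$ in the orbital graph $\Gamma_\xi$. Because $\xi$ is a fixed point of $a$, the point $\xi$ sits at the "end-reflection" of the $D_\infty$-chain, and any generator $s_i \in B$ moves a point in a small neighborhood of $\xi$ strictly away from $\xi$ (since $b$ has no fixed point near $\xi$), while a generator $s_i \in A$ either fixes points near $\xi$ or reflects them across $\xi$. A careful bookkeeping shows that, restricted to a sufficiently small neighborhood of $\xi$, the product $g$ collapses: on that neighborhood $g$ agrees with an element of $A$. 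Concretely, since each fragment $h \in A$ is determined on a neighborhood of $\xi$ by which side of the partition $\mathcal{P}_A$ near $\xi$ it reflects (and similarly for $B$, but $B$-fragments push away from $\xi$), one shows by induction on $k$ that $g|_U = h|_U$ for some $h \in A$ and some neighborhood $U$ of $\xi$; here one uses that the pieces of $A$ near $\xi$ are $a$-invariant open sets and that the germ of the action only depends on the piece containing points arbitrarily close to $\xi$. Thus the map $G_\xi \to G_\xi / G_{(\xi)}$ factors through a map whose image equals the image of $A \cap G_\xi$.

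Finally I would check that the restriction of $G_\xi \to G_\xi/G_{(\xi)}$ to $H_\xi$ is a bijection. It is surjective by the previous paragraph (every germ comes from some $h \in A$, and any $h \in A$ fixing $\xi$ with nontrivial germ lies in $H_\xi$ by definition). For injectivity: if $h \in H_\xi$ lies in $G_{(\xi)}$, then $h$ fixes a neighborhood of $\xi$ pointwise, contradicting that $h$ acts nontrivially on every neighborhood of $\xi$; so $H_\xi \cap G_{(\xi)} = \{\unit\}$. That $H_\xi$ is a subgroup of $A$ is immediate since the set of $h \in A$ fixing $\xi$ and acting nontrivially near $\xi$ is closed under products (a product of two such could in principle become trivial near $\xi$, landing in the identity of the germ group — which is fine, it just means $H_\xi$ as I defined it should be taken as $\{h \in A : h(\xi) = \xi\}$ modulo those fixing a neighborhood, or equivalently the image in the germ group; I would state it as: $H_\xi$ is the image of $A_\xi = \{h \in A : h(\xi)=\xi\}$ in $G_\xi/G_{(\xi)}$, which is a group). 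The main obstacle I anticipate is the germ-collapsing argument: making precise why an arbitrary word in $A \cup B$ fixing $\xi$ must agree near $\xi$ with a single $A$-fragment, rather than with some more complicated element — this requires using the one-dimensional (chain) structure of $\Gamma_\xi$ and the fact that $\xi$ is a genuine reflection endpoint, so that any "excursion" of a neighborhood of $\xi$ under the word returns it via the same reflection pattern. Everything else is routine.
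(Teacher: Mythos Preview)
Your approach is essentially the paper's: both reduce to showing that every germ $(g,\xi)$ with $g\in G_\xi$ equals the germ of some element of $A$, and the paper carries this out by observing that at every point of the orbit of $\xi$ other than $\xi$ itself each generator in $A\cup B$ has germ equal to that of $a$, $b$, or the identity (such points lie in the interior of a piece of the fragmentation, since boundary points of pieces are fixed points of $a$ or $b$), so the germ of a word $b_1a_1\cdots b_na_n$ collapses to $(ha_n,\xi)$ with $h\in\langle a,b\rangle$. One caution: your aside about a ``natural map'' $G\to D_\infty$ sending each fragment to the corresponding dihedral generator is not a well-defined group homomorphism in general and should be dropped, though you do not actually rely on it; your worry about $H_\xi$ being a subgroup reflects a real (minor) imprecision in the lemma's wording rather than a flaw in the argument.
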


\begin{proof}
Consider a germ $(g, \xi)$. We can write $g$ as a product $b_1a_1b_2a_2\cdots b_na_n$, where $a_i\in A$, $b_i\in B$, and $a_i, b_i$ are all non-trivial except maybe for $b_1$ or $a_n$. There are no fixed points of $b$ in the orbit of $\xi$, and the only fixed point of $a$ in the orbit of $\xi$ is the point $\xi$ itself. It follows that the germs $(b_n, a_n(\xi))$, $(a_{n-1}, b_na_n(\xi))$, \ldots, $(b_1, a_1b_2a_2\cdots b_na_n(\xi))$ are equal either to germs of the identity, or to the germs of the respective elements $a$ or $b$. (We use the fact that points on the boundary of the pieces of $A$ or $B$ are fixed points of $a$ or $b$, respectively.) Consequently, the germ $(g, \xi)$ is equal to a germ of the form $(ha_n, \xi)$, where $h\in\langle a, b\rangle$. In particular, the germ of an element of the stabilizer of $\xi$ is equal to the germ of an element of $A$. This finishes the proof of the lemma.
\end{proof}

Consider the graph $\Xi$ with the set of vertices equal to the direct product of $H_\xi$ with the set of vertices of $\Gamma_\xi$. 
Two vertices $(h_1, v_1)$ and $(h_2, v_2)$ of $\Xi$ are connected by an edge 
labeled by $h\in A\cup B$ if $h_1=h_2$ and $v_1$ and $v_2$ are vertices of
$\Gamma_\xi$ connected by an edge labeled by $h$, or if $v_1=v_2=\xi$ and
$h=h_1h_2$. In other words, we take $|H_\xi|$ copies of $\Gamma_\xi$,
and then connect their roots $\xi$ by a full graph (the Cayley graph of $H_\xi$) with edges naturally labeled by the elements of $H_\xi$.

\begin{proposition}
\label{pr:graphgerms}
The graph of germs $\widetilde\Gamma_\xi$ is naturally isomorphic to
$\Xi$. The action of the group of deck transformations
$G_\xi/G_{(\xi)}\cong H_\xi$ of the
covering $\widetilde\Gamma_\xi\arr\Gamma_\xi$ coincides with the natural
action of $H_\xi$ on $\Xi$.
\end{proposition}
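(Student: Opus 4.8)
The plan is to exhibit an explicit label-preserving isomorphism $\Phi\colon\Xi\to\widetilde\Gamma_\xi$ and then to check that it carries the natural $H_\xi$-action on $\Xi$ to the deck action of $G_\xi/G_{(\xi)}\cong H_\xi$ on $\widetilde\Gamma_\xi$. The key input is Lemma~\ref{lem:germsofxi}: the map $h\mapsto hG_{(\xi)}$ is an isomorphism $H_\xi\to G_\xi/G_{(\xi)}$, and the only generators in $A\cup B$ that fix $\xi$ with a non-trivial germ are the elements of $H_\xi$ (a generator in $B$ fixing $\xi$ does not act as $b$ on any neighbourhood of $\xi$, hence lies in $G_{(\xi)}$; a generator in $A$ fixing $\xi$ with non-trivial germ satisfies the definition of $H_\xi$). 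I also use that $\Gamma_\xi$ is an infinite chain (one-ended, since $a$ fixes $\xi$), so that its underlying simple graph is a tree; and that, as noted in the proof of Lemma~\ref{lem:germsofxi}, $\xi$ is the only point of its orbit fixed by $a$ and no point of the orbit is fixed by $b$.

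First I would fix coset representatives adapted to the chain: choosing one edge of $\Gamma_\xi$ between each pair of adjacent vertices and reading its labels along the path from $\xi$ to a vertex $v$ produces an element $w_v\in G$ with $w_\xi=\unit$, $w_v(\xi)=v$, and $w_{s(v)}=sw_v$ for the chosen edge labels $s$ at $v$. Define $\Phi(h,v):=w_vhG_{(\xi)}$, i.e.\ the germ of $w_vh$ at $\xi$; this is compatible with the projections to $\Gamma_\xi$ since $h$ fixes $\xi$. On the fibre over any vertex $v$, after the obvious identification of $\{w_vkG_{(\xi)}\;:\;k\in G_\xi\}$ with $G_\xi/G_{(\xi)}$, the map $\Phi$ becomes the isomorphism $H_\xi\to G_\xi/G_{(\xi)}$ of Lemma~\ref{lem:germsofxi}; hence $\Phi$ is a bijection on vertices. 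One also checks that $\Phi$ does not depend on the choice of parallel edges, since changing them changes $w_v$ only by an element of $G_{(\xi)}$; this yields the asserted naturality.

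The substantive step is that $\Phi$ is edge-preserving in both directions. An edge of $\widetilde\Gamma_\xi$ at $w_vhG_{(\xi)}$ has some label $s\in A\cup B$ and ends at $sw_vhG_{(\xi)}$, which covers $s(v)$. If $s(v)\ne v$, then $w_{s(v)}^{-1}sw_v\in G_{(\xi)}$: this is clear for the chosen edge between $v$ and $s(v)$, and for any parallel edge it follows because $s$ and the chosen label both act as $a$ (resp.\ $b$) on the open piece of $\mathcal{P}_A$ (resp.\ $\mathcal{P}_B$) containing $v$ and $s(v)$, so their product acts trivially on that piece and lies in $G_{(s(v))}$, which $w_{s(v)}$ conjugates into $G_{(\xi)}$; normality of $G_{(\xi)}$ in $G_\xi$ then gives $sw_vhG_{(\xi)}=w_{s(v)}hG_{(\xi)}=\Phi(h,s(v))$, a within-copy edge of $\Xi$. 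If $s(v)=v$ with $v\ne\xi$, then $s$ fixes a neighbourhood of $v$ (otherwise $v$ would lie on the boundary of the fixed-point set of $s$, hence be fixed by $a$ or $b$, contradicting the properties of the orbit recalled above), so the edge is a loop. If $v=\xi$, then either $s\in G_{(\xi)}$ (a loop) or $s\in H_\xi$, and the edge ends at $shG_{(\xi)}=\Phi(sh,\xi)$; since $H_\xi$ has exponent $2$, the $\Xi$-edge joining $(h,\xi)$ and $(sh,\xi)$ carries the label $h\cdot(sh)=s$, so it is exactly a root-gluing edge of $\Xi$. Reading these three cases in reverse shows that every edge of $\Xi$ is the $\Phi$-image of a unique edge of $\widetilde\Gamma_\xi$ with the same label, so $\Phi$ is an isomorphism of labelled graphs.

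Finally, the deck transformation of $\widetilde\Gamma_\xi$ corresponding to $h\in H_\xi\cong G_\xi/G_{(\xi)}$ is $gG_{(\xi)}\mapsto ghG_{(\xi)}$, and it sends $\Phi(h',v)=w_vh'G_{(\xi)}$ to $w_vh'hG_{(\xi)}=\Phi(h'h,v)$; so under $\Phi$ it becomes $(h',v)\mapsto(h'h,v)$, which is the natural action of $H_\xi$ permuting the copies of $\Gamma_\xi$ in $\Xi$ (left and right multiplications agree here by commutativity of $H_\xi$). I expect the main point requiring care to be the behaviour at the root: one has to see that the loops of $\Gamma_\xi$ at $\xi$ labelled by $H_\xi$ are ``unwound'' in $\widetilde\Gamma_\xi$ into a copy of the Cayley graph of $H_\xi$ — which is precisely what Lemma~\ref{lem:germsofxi} makes possible — while everything else reduces to routine bookkeeping with the representatives $w_v$.
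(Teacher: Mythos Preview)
Your proof is correct and follows the same route as the paper: the map $\Phi(h,v)=w_vhG_{(\xi)}$ is precisely the inverse of the paper's identification of the germ $(g'h,\xi)$ with the vertex $(h,g'(\xi))$, where $g'$ runs over the dihedral words $\{\unit,a,ba,aba,\ldots\}$ (your $w_v$ is a germ-representative of such a $g'$). The paper simply asserts that this identification is an isomorphism of labelled graphs and that the deck action matches; you have supplied the case analysis that justifies the assertion. One small slip: in the case $s(v)\ne v$ you write that $w_{s(v)}$ conjugates $ss''$ into $G_{(\xi)}$, but the conjugation you actually perform is by $w_v$ (since $w_{s(v)}^{-1}sw_v=w_v^{-1}s''sw_v$); the conclusion is unaffected.
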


\begin{proof}
We know (see the proof of Lemma~\ref{lem:germsofxi})
that every germ $(g, \xi)$
is equal to a germ of the form $(g'h, \xi)$, where $g'\in\{a, ba, aba,
baba, \ldots\}$ and $h\in H_\xi$. Identify the germ $(g'h, \xi)$ with the vertex $(h,
v)\in \Xi$, where $v=g'(\xi)=g(\xi)$. It is easy to check that
this identification is an isomorphism of graphs. The statement
about the action by deck transformations also follows directly from
the description of the germs $(g, \xi)$.
\end{proof}

Let $P_1, P_2, \ldots, P_n\in\mathcal{P}_A$ be all pieces of the fragmentation $A$ that accumulate on $\xi$. Then the map $(\pi_{P_i})_{i=1}^n:H_\xi\arr (\Z/2\Z)^n$ is an isomorphic embedding.

Denote by $\Lambda_i$ the quotient of
$\widetilde\Gamma_\xi$ by the action of $\ker\pi_{P_i}$. It follows from Proposition~\ref{pr:graphgerms} that
$\Lambda_i$ is the graph obtained by
taking two copies $\{0\}\times\Gamma_\xi$ and $\{1\}\times\Gamma_\xi$ of
$\Gamma_\xi$ and connecting the endpoints $(0, \xi)$ and $(1, \xi)$ by $\edge{P_i}$.

Denote by $\lambda_i:\widetilde\Gamma_\xi\arr\Lambda_i$
the natural covering map. In terms of $\Xi$ and $\Lambda_i$, it is given by the rule
$\lambda_i(h, v)=(\pi_{P_i}(h), v)$.
See Figure~\ref{fig:lambda} where a covering $\lambda_i$ for the
Grigorchuk group is shown.

\begin{figure}
\centering
\includegraphics[width=5in]{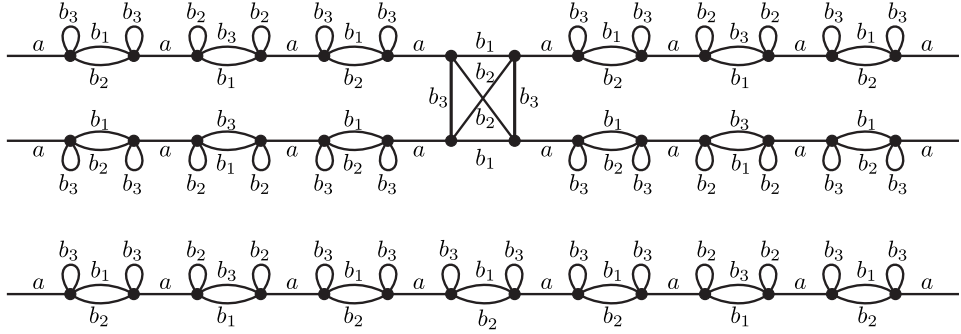}
\caption{A covering $\lambda_i$}
\label{fig:lambda}
\end{figure}

\begin{proposition}
\label{pr:Lambdai}
If $\zeta_n\in P_i$, $n\ge 1$, is a
sequence of regular points converging to $\xi$, then the rooted orbital
graphs $\Gamma_{\zeta_n}$ converge to $\Lambda_i$ in the space
of rooted labeled graphs.
\end{proposition}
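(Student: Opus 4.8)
The plan is to show convergence by checking that, for every $r$, the rooted ball $B_{\zeta_n}(r)$ in $\Gamma_{\zeta_n}$ is eventually isomorphic to the rooted ball $B_{(\unit,\xi)}(r)$ in $\Lambda_i$, where the root of $\Lambda_i$ is the vertex $(0,\xi)$ sitting at the junction of the two copies of $\Gamma_\xi$. (Here I abuse notation: the natural root of $\Lambda_i$ is one of the two copies of $\xi$; since $\zeta_n\to\xi$ and $\zeta_n\in P_i$, the germ data forces the root to land on the "$0$" copy.) Equivalently, using the metric on the space $\mathcal G$ of rooted labeled graphs, I must show $d\bigl((\Gamma_{\zeta_n},\zeta_n),(\Lambda_i,(0,\xi))\bigr)\to 0$.

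First I would recall the key picture from the preceding subsection: an orbital graph of $G$ is a "decorated" version of the bi-infinite chain which is the orbital graph of $D_\infty=\langle a,b\rangle$, with the $a$-edge out of a vertex $\zeta'$ replaced by the multiple edge $\edge{P}$ for the unique piece $P\in\mathcal P_A$ with $\zeta'\in P$ (and similarly for $b$), plus the reconstructible loops. So the rooted ball $B_{\zeta_n}(r)$ is completely determined by: (a) the sequence of pieces of $\mathcal P_A$ and $\mathcal P_B$ encountered as one walks out from $\zeta_n$ in the chain, up to combinatorial distance $r$; and (b) which of those vertices are fixed points of $a$ or $b$ in the $D_\infty$-orbit (this governs where the chain "turns around" and which loops appear). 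The same is true for $\Lambda_i$: by Proposition~\ref{pr:graphgerms} and the description of $\Lambda_i$ as two copies of $\Gamma_\xi$ glued at $\xi$ by $\edge{P_i}$, walking out from the root $(0,\xi)$ one first traverses the copy of $\Gamma_\xi$ rooted at $\xi$ — whose piece-sequence is precisely that of $\Gamma_\xi$ — until one reaches $\xi$ itself, at which point (because $\xi$ is a purely non-Hausdorff singularity, so $\xi$ is not a turning point for the full group but IS the fixed point of $a$) one crosses via $\edge{P_i}$ into the second copy.

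The heart of the argument is then a local-continuity statement: since $\xi$ is a fixed point of $a$ and $\zeta_n\in P_i$ converges to $\xi$, the itinerary of $\zeta_n$ inside the $D_\infty$-orbital graph agrees with that of $\xi$ out to larger and larger radius. More precisely: the pieces $P\in\mathcal P_A\cup\mathcal P_B$ are $a$- (resp. $b$-) invariant and open, so membership in a given piece is locally constant at regular points; the equalities and inequalities $g_1(\zeta)=g_2(\zeta)$ defining a ball of radius $r$ (as in the proof of Proposition~\ref{prop:regularcontainment}) hold on a neighborhood of any regular point. Applying this at $\xi$: there is a neighborhood $N_r$ of $\xi$ such that any regular point in $N_r$ on the "$P_i$ side" has its radius-$r$ ball isomorphic to the radius-$r$ ball of $\Lambda_i$ at $(0,\xi)$ — the crucial point being that $\xi$ lies in the closure of $P_i$ (it accumulates on $\xi$), that the germ of $a$ at $\xi$ acts via the copy-swap labeled by $\edge{P_i}$, and that no other singularity of $G$ lies in the $D_\infty$-orbit near $\xi$ on that side, so the chain does not "turn around" within distance $r$ on the far side. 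Hence for $n$ large enough that $\zeta_n\in N_r$, we get $B_{\zeta_n}(r)\cong B_{(0,\xi)}(r)$, which is exactly convergence.

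The main obstacle — and the step deserving the most care — is handling the behavior of the orbital graph at the vertices of the ball that are fixed points of $a$ or $b$ in the $D_\infty$-orbit, other than $\xi$ itself. Near such a vertex $\eta$ the chain folds back, and the local structure of $\Gamma_{\zeta_n}$ there need not be locally constant unless $\eta$ is regular; but this is controlled by Corollary~\ref{cor:stabaa} (each stabilizer is conjugate to $\langle a\rangle$ or $\langle b\rangle$, so such turning vertices are "the same" $\xi'$ up to the orbit) together with Lemma~\ref{lem:germsofxi} and the purely-non-Hausdorff hypothesis, which guarantees that the ONLY singularity reachable on the relevant side within bounded distance is the copy of $\xi$ at which we glue, while the finitely many other configurations all stabilize once $\zeta_n$ is close enough. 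I would also need to verify, as a small bookkeeping lemma, that the covering map $\lambda_i(h,v)=(\pi_{P_i}(h),v)$ indeed identifies the root correctly, i.e.\ that approaching $\xi$ from inside $P_i$ (as opposed to from another piece) selects the gluing edge $\edge{P_i}$ and not some other $\edge{P_j}$ — this is immediate from the definition of $\pi_{P_i}$ and the fact that $\zeta_n\in P_i$, but it is what makes the limit $\Lambda_i$ rather than $\Lambda_j$.
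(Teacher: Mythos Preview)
Your sketch is in the right direction but has two problems. First, you invoke ``the purely non-Hausdorff hypothesis'' on $\xi$, which is \emph{not} assumed in this proposition (it enters only later, in Theorem~\ref{th:main}). The statement holds for any fragmentation with a fixed point $\xi$ of $a$ and any piece $P_i$ accumulating on $\xi$; nothing about the structure of $H_\xi$ is needed. Second, your worry about ``turning points'' of the chain inside $B_{\zeta_n}(r)$ is a non-issue: since $\zeta_n$ is regular and regularity is orbit-invariant, every vertex of $\Gamma_{\zeta_n}$ is regular, whereas any fixed point of $a$ (or $b$) is singular (for a piece $P$ accumulating on it, any $h\in A$ with $\pi_P(h)=1$ has nontrivial germ there, because the fixed-point set of $a$ has empty interior). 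So $\Gamma_{\zeta_n}$ is automatically a bi-infinite chain with no folding, and the paragraph invoking Corollary~\ref{cor:stabaa} and the non-Hausdorff condition should simply be deleted.

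The paper's proof is organized differently and sidesteps the piece-sequence bookkeeping. It works with the ball $B_\xi(r)$ in the \emph{graph of germs} $\widetilde\Gamma_\xi$, which is encoded by finitely many equalities and inequalities of germs $(g_1,\xi)=(g_2,\xi)$ or $(g_1,\xi)\ne(g_2,\xi)$ for $|g_i|\le r$. Equalities of germs persist on a neighborhood of $\xi$; inequalities with $g_1(\xi)\ne g_2(\xi)$ also persist; the only delicate case is $(g_1,\xi)\ne(g_2,\xi)$ but $g_1(\xi)=g_2(\xi)$, where (via Lemma~\ref{lem:germsofxi}) one writes $(g_j,\xi)=(gh_j,\xi)$ with $h_j\in H_\xi$ and observes that whether $g_1(\zeta)=g_2(\zeta)$ for regular $\zeta\in P_i$ near $\xi$ is governed exactly by $\pi_{P_i}(h_1h_2)$. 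This yields directly that $B_\zeta(r)$ is the quotient of $B_\xi(r)\subset\widetilde\Gamma_\xi$ by $\ker\pi_{P_i}$, which is by construction the ball of radius $r$ in $\Lambda_i$. Your decorated-chain picture can be completed along similar lines once the extraneous hypothesis and the turning-point discussion are removed, but the germ argument is shorter and handles all configurations uniformly.
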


For the definition of the space of rooted graphs, see Subsection~\ref{ss:graphsofactions}.

\begin{proof}
For a given positive integer $r$ consider the ball $B_\xi(r)$ of
radius $r$ in the graph of germs $\widetilde\Gamma_\xi$. It is given by a
set of equalities and inequalities of germs of the form $(g_1,
\xi)=(g_2, \xi)$ or $(g_1, \xi)\ne (g_2, \xi)$ for elements $g_1,
g_2\in G$ of length at most $r$. If $(g_1, \xi)=(g_2, \xi)$, then
$g_1(\zeta)=g_2(\zeta)$ for all $\zeta$ belonging to a
neighborhood of $\xi$. If $g_1(\xi)\ne g_2(\xi)$, then we also have
$g_1(\zeta)\ne g_2(\zeta)$ for all $\zeta$ in a neighborhood of
$\xi$. Suppose that $(g_1, \xi)\ne (g_2, \xi)$ but
$g_1(\xi)=g_2(\xi)$. Then $(g_1, \xi)=(gh_1, \xi)$ and $(g_2,
\xi)=(gh_2, \xi)$ for some $g\in\langle a, b\rangle$ and $h_1,
h_2\in H_\xi$. If $\pi_{P_i}(h_1h_2)=0$, then $h_1|_{P_i}=h_2|_{P_i}$,
hence $g_1(\zeta)=g_2(\zeta)$ for all $\zeta\in N\cap P_i$ for some neighborhood $N$ of $\xi$. If
$\pi_{P_i}(h_1h_2)=1$, then $g_1(\zeta)\ne g_2(\zeta)$ for all regular
points $\zeta\in N\cap P_i$ for some neighborhood $N$ of $\xi$,
since $h_1h_2|_{P_i}=a|_{P_i}$ and the set of fixed points of $a$
is nowhere dense.

We see that for all regular points $\zeta\in N\cap P_i$, where $N$ is a
sufficiently small neighborhood of $\xi$, the ball $B_\zeta(m)$ of the
orbital graph $\Gamma_\zeta$ is equal to the quotient of the ball
$B_\xi(m)\subset\widetilde\Gamma_\xi$ by the action of the kernel of the
projection $\pi_{P_i}$.
\end{proof}

\begin{corollary}
Every segment of an orbital graph of $G$ is isomorphic to a segment of the orbital graph of a regular point. In particular, for every segment $\Sigma$ of an orbital graph of $G$ an isomorphic copy of $\Sigma$ is contained in every orbital graph of $G$ on some bounded distance $R(\Sigma)$ from every vertex of the orbital graph.
\end{corollary}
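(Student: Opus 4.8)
The plan is to use the structure theory just developed: the only orbital graphs of $G$ that are not orbital graphs of regular points are, up to interchanging $a$ and $b$, the graph $\Gamma_\xi$ of a fixed point $\xi$ of $a$ with nontrivial germ group, and this $\Gamma_\xi$ sits inside each of the graphs $\Lambda_i$ as "half'' of them, while the $\Lambda_i$ are limits of orbital graphs of regular points by Proposition~\ref{pr:Lambdai}. So the strategy is: reduce to $\Gamma_\xi$; embed an arbitrary segment of $\Gamma_\xi$ as a segment of a suitable $\Lambda_i$; transfer it to a regular orbital graph via Proposition~\ref{pr:Lambdai}; and then obtain the ``in particular'' part from Proposition~\ref{prop:regularcontainment}.

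First I would reduce to the relevant case. By Corollary~\ref{cor:stabaa} and the remark following Lemma~\ref{lem:germsofxi}, every singular point of $G$ lies in a non-free $D_\infty$-orbit, and such an orbit contains a unique point whose $D_\infty$-stabilizer equals $\langle a\rangle$ or $\langle b\rangle$. Since $G$-regularity is $G$-invariant, an orbital graph of $G$ is either entirely made of regular points --- in which case the first assertion is immediate --- or is isomorphic, after possibly interchanging the roles of $A$ and $B$, to $\Gamma_\xi$ for a point $\xi$ with $a(\xi)=\xi$ and $H_\xi\ne\{\unit\}$, in the notation of Lemma~\ref{lem:germsofxi}. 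In that case some $h\in H_\xi$ acts as $a$ on points arbitrarily close to $\xi$; such points, not being fixed points of $a$, lie in interiors of pieces contained in $E_{h,a}$, so at least one piece $P_i\in\mathcal P_A$ accumulates on $\xi$.

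Next I would embed a segment into $\Lambda_i$. Because $\xi$ is a fixed point of $a$ (and the orbit is infinite by minimality), $\Gamma_\xi$ is a one-ended chain with $\xi$ as its only endpoint, so $\xi$ is an endpoint of every segment containing it, and the convention that a segment omits the loops at its endpoints is exactly what is needed. By the description of $\Lambda_i$ preceding Proposition~\ref{pr:Lambdai}, $\Lambda_i$ is obtained from two copies of $\Gamma_\xi$ by joining their endpoints with $\edge{P_i}$; including one of these copies sends a segment $\Sigma$ of $\Gamma_\xi$ to a segment of $\Lambda_i$, since the only vertex whose incident edges change under the inclusion is the image of $\xi$, and the extra $\edge{P_i}$ there is not part of the segment. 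This segment of $\Lambda_i$ lies in some ball $B(r)$ about the root of $\Lambda_i$. Since $G$-regular points are co-meager, hence dense, and $P_i$ accumulates on $\xi$, I can choose regular points $\zeta_n\in P_i$ with $\zeta_n\to\xi$; by Proposition~\ref{pr:Lambdai} the rooted graphs $\Gamma_{\zeta_n}$ converge to $\Lambda_i$, so for large $n$ the ball of radius $r$ about $\zeta_n$ in $\Gamma_{\zeta_n}$ is isomorphic to $B(r)$. Hence $\Sigma$ is isomorphic to a segment of $\Gamma_{\zeta_n}$, an orbital graph of a regular point, proving the first assertion. For the ``in particular'' part, fix a regular point $w$ that is an endpoint of a copy of $\Sigma$ inside some $\Gamma_w$; then that copy lies in $B_w(|\Sigma|)$, and applying Proposition~\ref{prop:regularcontainment} to the regular point $w$ with radius $r=|\Sigma|$ gives a uniform bound $R(r)$ such that every $\Gamma_\eta$ contains within distance $R(r)$ of its root a copy of $B_w(r)$, hence a copy of $\Sigma$ within distance $R(r)+r$ of the root; letting $\eta$ range over $\X$ (so that every vertex of every orbital graph occurs as a root) yields the required bound $R(\Sigma)=R(|\Sigma|)+|\Sigma|$.

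The only point requiring care is the bookkeeping in the third paragraph: verifying that a segment of $\Gamma_\xi$ really does become a segment of $\Lambda_i$ despite the modification of the graph at $\xi$ (this is where the deliberate exclusion of endpoint loops from a segment is used), and that it is contained in a ball about the \emph{root} of $\Lambda_i$ so that Proposition~\ref{pr:Lambdai} can be invoked. The reduction step and the final deduction from Proposition~\ref{prop:regularcontainment} are routine once the structural results above are available.
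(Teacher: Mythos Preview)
Your proof is correct and follows essentially the same approach as the paper's, which simply states that every segment of $\Gamma_\xi$ for a non-regular $\xi$ is a segment of some $\Lambda_i$, hence of a regular orbital graph. You supply the details the paper omits --- the reduction to a fixed point of $a$, the existence of a piece $P_i$ accumulating on $\xi$, the reason the endpoint-loop convention makes the inclusion $\Gamma_\xi\hookrightarrow\Lambda_i$ send segments to segments, and the derivation of the uniform bound from Proposition~\ref{prop:regularcontainment} --- but the underlying argument is the same.
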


\begin{proof}
Let $\xi$ be a non-regular point. Then every segment of $\Gamma_\xi$ is a segment of the limit $\Lambda_i$ of orbital graphs of regular points, hence it is a segment of the orbital graph of a regular point.
\end{proof}

\begin{corollary}
For every oriented segment $\Sigma$ of an orbital graph of $G$ there exist isomorphic copies of $\Sigma$ and $\Sigma^{-1}$ in every oriented orbital graph.
\end{corollary}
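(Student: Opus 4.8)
The preceding corollary already propagates the \emph{unoriented} isomorphism type of a segment to every orbital graph, so the only new content is that, alongside an oriented segment $\Sigma$, its reverse $\Sigma^{-1}$ also occurs, \emph{with the prescribed orientation}, in every orbital graph of $G$. The plan is: (i) build a single segment $\Theta$ of some orbital graph of $G$ that contains oriented copies of \emph{both} $\Sigma$ and $\Sigma^{-1}$; (ii) apply the preceding corollary to $\Theta$.

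For (i) I would work near a singular point. As in the proof of the preceding corollary, fix a non-regular point $\xi$; replacing $\xi$ by a suitable point of its orbit and possibly interchanging the roles of $a$ and $b$ (Corollary~\ref{cor:stabaa}), we may assume $\xi$ is a fixed point of $a$. Since the dihedral action is minimal, the fixed-point set of $a$ is nowhere dense, so $\Gamma_\xi$ is a \emph{one-ended} chain with $\xi$ at its end. Pick a piece $P_i\in\mathcal{P}_A$ accumulating on $\xi$; by Proposition~\ref{pr:Lambdai} the orbital graphs $\Gamma_{\zeta_n}$ of regular points $\zeta_n\in P_i$ with $\zeta_n\to\xi$ converge, in the space of rooted graphs, to $\Lambda_i$ --- the two-ended chain obtained by gluing two copies $\{0\}\times\Gamma_\xi$ and $\{1\}\times\Gamma_\xi$ at their endpoints $(0,\xi)$, $(1,\xi)$ by the multiple edge $\edge{P_i}$. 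The map $\sigma\colon(j,v)\mapsto(1-j,v)$ is a label-preserving involutive automorphism of $\Lambda_i$ fixing the central edge setwise and interchanging the two ends; in particular it reverses the linear order of $\Lambda_i$.

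Now let $\Sigma$ be an oriented segment of an orbital graph of $G$. By the preceding corollary an isomorphic copy of $\Sigma$ occurs within distance $R(\Sigma)$ --- a bound depending on $\Sigma$ only --- of every vertex of $\Gamma_{\zeta_n}$; choosing a base vertex deep inside the half $\{0\}\times\Gamma_\xi$ and taking $n$ large (so that a large ball about that vertex already agrees with the corresponding ball of $\Lambda_i$), we obtain a copy of $\Sigma$ lying entirely in $\{0\}\times\Gamma_\xi$. Its image under $\sigma$ then lies in $\{1\}\times\Gamma_\xi$, and because $\sigma$ reverses the order of $\Lambda_i$ while preserving labels, read in the fixed orientation of $\Lambda_i$ this image is a copy of $\Sigma^{-1}$. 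Let $\Theta$ be a finite segment of $\Lambda_i$ containing both occurrences; for $n$ large $\Theta$ is a segment of $\Gamma_{\zeta_n}$, hence of an orbital graph of $G$, and it --- and likewise $\Theta^{-1}$, which merely swaps the two occurrences --- contains oriented copies of both $\Sigma$ and $\Sigma^{-1}$.

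Finally, for (ii): given any orbital graph $\Gamma'$ of $G$ with any orientation, the preceding corollary gives an isomorphic copy of $\Theta$ inside $\Gamma'$; relative to the orientation of $\Gamma'$ this copy is $\Theta$ or $\Theta^{-1}$, and in either case it contains oriented copies of $\Sigma$ and of $\Sigma^{-1}$, as required. The one point demanding care is the orientation bookkeeping --- above all the verification that $\sigma$ sends a forward copy of $\Sigma$ to a forward copy of $\Sigma^{-1}$ and not to another forward copy of $\Sigma$ --- which is exactly where one uses that $\Lambda_i$ consists of two copies of the one-ended chain $\Gamma_\xi$ glued along their closed ends, so that $\sigma$ acts as a genuine reflection.
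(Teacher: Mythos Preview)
Your proof is correct and follows essentially the same route as the paper: embed $\Sigma$ in one ray of $\Lambda_i$, reflect via the involution $(j,v)\mapsto(1-j,v)$ to get $\Sigma^{-1}$ in the other ray, enclose both in a segment $\Theta$ (the paper calls it $\Sigma'$), and invoke the preceding corollary. The only difference is cosmetic: the paper places $\Sigma$ in $\Gamma_\xi$ directly via the preceding corollary and then reads off the two copies $\{0\}\times\phi(\Sigma)$ and $\{1\}\times\phi(\Sigma)$ in $\Lambda_i$, whereas you detour through the approximating graphs $\Gamma_{\zeta_n}$ before landing in $\{0\}\times\Gamma_\xi$; this extra step is unnecessary but harmless.
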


\begin{proof}
A copy $\phi(\Sigma)$ of the segment $\Sigma$ is contained in $\Gamma_\xi$. It follows that $\Lambda_i$ contains the copies $\{0\}\times\phi(\Sigma)$ and $\{1\}\times\phi(\Sigma)$ of $\Sigma$. They have opposite orientation, and are contained in a segment $\Sigma'$ of $\Lambda_i$. A copy of the segment $\Sigma'$ is contained in every orbital graph, and inside it we have two copies of $\Sigma$ in opposite orientations.
\end{proof}

\section{Periodicity}
\label{s:periodicity}

\begin{theorem}
\label{th:main}
Let $G$ be a fragmentation of a minimal dihedral group action on a Cantor set $\X$. If there exists a purely non-Hausdorff singularity $\xi\in\X$, then $G$ is periodic.
\end{theorem}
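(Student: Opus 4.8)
The plan is to track the orbit of a generic point $\zeta\in\X$ under the powers of a fixed element $g\in G=\langle A\cup B\rangle$, using the one-dimensional structure of the orbital graph $\Gamma_\zeta$, which by the Lemma in Subsection~2.3 is a bi-infinite (or one-ended) chain. Write $g=s_k\cdots s_1$ as a product of generators $s_i\in A\cup B$. Each $s_i$ acts on each vertex of $\Gamma_\zeta$ either trivially or as $a$ or $b$, i.e.\ by moving one step ``left'' or ``right'' along the chain (or staying put), where ``left/right'' is determined by the alternating $a,b$-labeling of the underlying $D_\infty$-chain. So a single application of $g$ displaces $\zeta$ along $\Gamma_\zeta$ by a bounded amount, and one passes through every vertex strictly between $\zeta$ and $g(\zeta)$. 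Since $g$ is fixed, it suffices to show the sequence $\zeta, g(\zeta), g^2(\zeta),\ldots$ is eventually periodic — because the action on an orbit has only finitely many ``local behaviors'' of $g$, eventual periodicity of the point sequence forces $g^m(\zeta)=\zeta$ for some $m$, and then, since $\zeta$ is generic (regular), $g^m$ fixes a neighborhood of $\zeta$; by minimality (and the argument that regular points have dense orbit, together with the fact that $\ker$ of the action on this orbit is controlled) this forces a uniform bound, giving $g^N=\unit$ for a fixed $N$. The reduction of periodicity to the dynamics of a single $g$ on a single orbital graph is the routine part.

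The heart of the argument is to show that the ``reflector'' behaviour at $\xi$ keeps the trajectory $g^n(\zeta)$ bounded. First I would set up coordinates: identify the vertices of $\Gamma_\zeta$ with $\Z$ so that the point $\xi$, if it lies in this orbit, sits at (or near) the origin; by Corollary~2.19 the stabilizer of any singular point is conjugate to $\langle a\rangle$ or $\langle b\rangle$, so there is essentially one ``special'' vertex in the orbit where reflection can happen. The key structural input is the \emph{purely non-Hausdorff} hypothesis, unpacked via Proposition~3.13 and Proposition~3.15: near $\xi$ every orbital graph of a regular point looks like one of the graphs $\Lambda_i$, which is two copies of $\Gamma_\xi$ glued at the roots by a multi-edge $\edge{P_i}$. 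The crucial consequence is that for \emph{every} $h\in H_\xi$ the interior of its fixed-point set accumulates on $\xi$; translated into the orbital graph $\Gamma_\zeta$ of a nearby regular point, this says: whenever the trajectory enters a sufficiently small neighborhood of (the position corresponding to) $\xi$, the local picture forces it to ``bounce''. Concretely, I would argue that for each generator $s$, on each small enough neighborhood of $\xi$ there is a sign $\epsilon_s\in\{+,-,0\}$ governing the direction $s$ moves points, and the non-Hausdorff condition guarantees these signs conspire so that $g$, applied near $\xi$, either fixes a neighborhood or reverses orientation — it cannot ``push through'' $\xi$ in a consistent direction on both sides.

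Then I would run the following potential/descent argument. Suppose for contradiction that $g^n(\zeta)$ is unbounded in $\Gamma_\zeta$. Since each step $g^n(\zeta)\mapsto g^{n+1}(\zeta)$ moves a bounded distance and the chain is one-dimensional, to become unbounded the trajectory must, after some point, travel monotonically (say rightward) for arbitrarily long stretches without recrossing. Pick a regular $\zeta$ very close to $\xi$ so that, by Proposition~3.15, a large ball around $\xi$'s position in $\Gamma_\zeta$ is isomorphic to the corresponding ball in some $\Lambda_i$; better, by linear repetitivity/repetitiveness of the orbital graphs (Corollary~3.16, Corollary~3.17), copies of this ``reflector'' segment recur at bounded gaps throughout $\Gamma_\zeta$ \emph{in both orientations}. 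A rightward-moving trajectory must enter such a reflector segment; inside it, the analysis of the previous paragraph shows $g$ acts either as a partial identity (so the trajectory stalls, bounded) or reverses the orientation of a whole block, so the trajectory's rightward progress is undone. Iterating, I get that the displacement $|g^n(\zeta)|$ cannot grow without bound — any attempted escape to $+\infty$ is reflected back, and likewise to $-\infty$ — so $\{g^n(\zeta):n\ge1\}$ is finite, hence $g^m(\zeta)=\zeta$ for some $m$, as desired.

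The main obstacle I expect is making the ``reflection'' step rigorous: one must show that \emph{every} way the element $g$ can act on points of a small neighborhood of $\xi$ results in orientation-reversal or stalling, never net translation, and this has to be extracted carefully from the description of $H_\xi\hookrightarrow(\Z/2\Z)^n$ and the structure of the $\Lambda_i$ in Proposition~3.13 — in particular, one needs that a product $g=s_k\cdots s_1$ whose germ at $\xi$ is nontrivial has its germ in $H_\xi$ (Lemma~3.11), and that nontriviality of this germ, because of purely non-Hausdorffness, still forces the fixed-point interior to accumulate on $\xi$, which is exactly what makes the ``bounce'' rather than ``pass-through'' happen. A secondary technical point is handling the case where $\xi$ does not lie in the orbit of the chosen $\zeta$ (then one uses that $\Gamma_\zeta$ is approximated by graphs $\Lambda_i$ only locally, and must globalize via repetitivity), and ensuring the bound on the period is uniform enough to conclude genuine periodicity of $G$ rather than merely of each individual group element — though for mere periodicity pointwise-finiteness of orbits of each $g$ already suffices.
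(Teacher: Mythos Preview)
Your high-level architecture is right --- reduce to showing that the $g$-orbit of every vertex in every orbital graph is finite, then use repetitivity to get a uniform bound --- and this matches the paper. But the ``reflection'' step, which is the whole content of the theorem, has a genuine gap.

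You claim that ``for each generator $s$, on each small enough neighborhood of $\xi$ there is a sign $\epsilon_s\in\{+,-,0\}$,'' and that the product $g$ therefore ``either fixes a neighborhood or reverses orientation'' near $\xi$. This is false: the action of $s\in A$ on a regular point $\zeta$ near $\xi$ depends on which piece $P_i\in\mathcal P_A$ contains $\zeta$, and by hypothesis \emph{several} pieces accumulate on $\xi$. So there is no single sign on any neighborhood, and $g$ certainly \emph{can} push through: in each limit graph $\Lambda_i=\Gamma_\xi^{-1}\edge{P_i}\Gamma_\xi$ a trajectory may cross the central edge and escape to the far end. There is no fixed ``reflector segment'' that turns back all trajectories.

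What the paper actually does is subtler, and your proposal does not contain it. One argues by contradiction that some segment $\Sigma\supset\Delta$ has the property that for \emph{every} embedding of $\Sigma$ into a regular orbital graph, the sequence $g^k(\varphi(v))$ escapes without returning to $\varphi(\Delta)$; in particular the direction of escape is determined by the isomorphism type of $\Sigma$ (since the first step $g(\varphi(v))$ is). Now embed $\Sigma$ in $\Gamma_\xi$ with $\xi$ on the escape side, lift to the graph of germs $\widetilde\Gamma_\xi\cong H_\xi\times\Gamma_\xi$, and project via $\lambda_i$ to the various $\Lambda_i$. In one $\Lambda_i$ the trajectory crosses to the other ray, so in $\widetilde\Gamma_\xi$ it ends in a branch $\{h\}\times\Gamma_\xi$ with $h\ne\unit$. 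The purely non-Hausdorff hypothesis says precisely that there is some $j$ with $\pi_{P_j}(h)=0$, so under $\lambda_j$ both the start and end branches map to the \emph{same} ray of $\Lambda_j$; hence in $\Lambda_j$ the trajectory must recross $\varphi(\Delta)$, contradicting the universal-escape assumption. The covering $\widetilde\Gamma_\xi\to\Lambda_j$ and the comparison across different $j$ is the mechanism you are missing; the non-Hausdorff condition is used not to force reflection directly, but to guarantee that no single $h\in H_\xi$ survives all projections $\pi_{P_j}$.

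A minor point: you invoke linear repetitivity, but only ordinary repetitivity (Proposition~\ref{prop:regularcontainment}, from minimality) is needed here; linear repetitivity enters only for the growth estimate.
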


\begin{proof}
We may assume that $\xi$ is a fixed point of $a$.
Let $g\in G$. Let $m$ be the length of $g$ as a product of elements of $A\cup B$. Then for every $\zeta\in\X$ the image $g(\zeta)$
belongs to the ball $B_\zeta(m)$ in the orbital graph $\Gamma_\zeta$,
and is uniquely determined by the labels of the edges of $B_\zeta(m+1)$.

\begin{lemma}
\label{lem:returning}
For every segment $\Sigma$ of an orbital graph of $G$,
a subsegment $\Delta\subset\Sigma$ of edge-length $m$, and a vertex
$v$ of $\Delta$ there exists an embedding $\varphi$ of $\Sigma$
into an orbital graph of a regular point and an integer $k\ge 1$ such
that $g^k(\varphi(v))\in\varphi(\Delta)$.
\end{lemma}

\begin{proof}
Suppose that it is not true for some $\Sigma$, $\Delta$,
$v\in\Delta$, i.e., that for every orbital graph $\Gamma$ of a regular point and an
embedding $\varphi:\Sigma\arr\Gamma$ the sequence $g^k(\varphi(v))$,
$k\ge 1$,
does not come back to $\varphi(\Delta)$. Since for every vertex $u$
the distance from $u$ to $g(u)$ is not more than $m$, the sequence
$g^k(\varphi(v))$, $k\ge 1$, always stays in one of the two connected
components of $\Gamma\setminus\varphi(\Delta)$. It follows that
$g^k(\varphi(v))$ converges to one of the two ends of the graph $\Gamma$. This end is on the same side of $\varphi(\Delta)$ as $g(\varphi(v))$.

There exist embeddings $\Sigma\arr\Gamma_\xi$, in both orientations, where $\xi$ is the purely non-Hausdorff singularity. In particular, there exists an embedding $\varphi:\Sigma\arr\Gamma_\xi$ such that $\varphi(g(v))$ is on the same side of $\varphi(\Delta)$ as $\xi$.

Consider the
corresponding copy $\varphi_0:\Sigma\arr\{\unit\}\times
\Gamma_\xi$ of $\Sigma$ in the ray $\{\unit\}\times
\Gamma_\xi$ of the graph of germs $\widetilde\Gamma_\xi=\Xi$.

Consider the image $\lambda_i\circ\varphi_0(\Sigma)$ of
$\varphi_0(\Sigma)$ in any $\Lambda_i$ under the natural covering $\lambda_i$. It belongs to the ray $\{0\}\times
\Gamma_\xi$ of $\Lambda_i$. Since $\varphi_0(g)$ is closer to $(\unit, \xi)$ than $\varphi_0(\Delta)$, the sequence
$g^k(\lambda_i\circ\varphi_0(v))$ will converge to the infinite end of the ray $\{1\}\times
\Gamma_\xi$ of $\Lambda_i$.

It follows that the sequence $g^k(\varphi_0(v))$ will
converge in $\widetilde\Gamma_\xi$ to an end $\{h\}\times\Gamma_\xi$ different from
$\{\unit\}\times\Gamma_\xi$.

Since $\xi$ is a purely non-Hausdorff singularity, there exists a projection
$\lambda_j:\widetilde\Gamma_\xi\arr\Lambda_j$ such that $\lambda_j(\{h\}\times
\Gamma_\xi)=\lambda_j(\{\unit\}\times\Gamma_\xi)=\{0\}\times\Gamma_\xi$. Then the sequence $\lambda_j(g^k(\varphi_0(w)))$ will
move from one connected component of $\Lambda_j\setminus\lambda_j(\varphi_0(\Delta)$ to another, which is a
contradiction. See Figure~\ref{fig:prooftorsion}, where projections of
$\widetilde\Gamma_\xi$ onto $\Lambda_i$ and $\Lambda_j$ are shown.
\end{proof}

\begin{figure}
\centering
\includegraphics{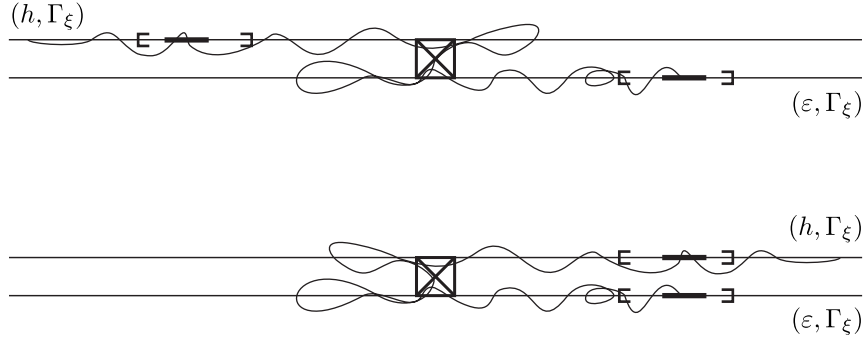}
\caption{Coming back}
\label{fig:prooftorsion}
\end{figure}

Let $\Delta$ be as the Lemma~\ref{lem:returning}, and let $v_0, v_1, \ldots, v_m$ be
the list of its vertices. According to the lemma, there exists a
copy of $\Delta$ in an orbital graph $\Gamma$ of a regular point such
that $g^{k_0}(v_0)\in\Delta$ for some $k_0\ge 1$. Let $\Sigma_0$ be
a sufficiently big segment of $\Gamma$ containing $\Delta$ such that
the $(m+1)$-neigborhood of the sequence $g^{k}(v_0)$ for $k=0, 1, \ldots, k_0$ belongs to $\Sigma_0$. Then $g^{k_0}(v_0)\in\Delta$
in every copy of $\Sigma_0$ in every orbital graph.

Apply now Lemma~\ref{lem:returning} for
$\Sigma=\Sigma_0$ and for the vertex $v_1$ of $\Delta$. We will find
an orbital graph with a copy of $\Sigma_0$ in which both sequences
$g^k(v_0)$ and $g^k(v_1)$ return back to $\Delta$. Therefore there
exists a segment $\Sigma_1$ containing $\Delta$ such that
$g^k(v_0)$ and $g^k(v_1)$ return to $\Delta$ in every orbital graph
containing $\Sigma_1$. Continuing this way we will find a segment
$\Sigma_m$ such that every vertex of $\Delta$ returns inside
$\Sigma_m$ back to $\Delta$ under some positive power of $g$. It
follows that orbit of every vertex of $\Delta\subset\Sigma_m$ is finite
and contained in $\Sigma_m$.

Let $\Gamma$ be an orbital graph of a regular point. By
Proposition~\ref{prop:regularcontainment}, there exists
$R>0$ such that for every vertex $u$ of $\Gamma$ there exists a copy
of $\Sigma_m$ on both sides of $u$ on distances at most $R$. Let
$M$ be the number of vertices of $\Sigma_m$. Then for every vertex $u$ of
$\Gamma$ either the sequence $g^k(u)$ includes a point of one of the
neighboring copies of $\Delta$, or always stays between them. In the
first case the length of the orbit is not more than $M$, in the second
case it is less than $2R+2M$. It follows that the lengths of all
$g$-orbits of vertices of $\Gamma$ are uniformly bounded, hence there
exists $n$ such that $g^n$ acts trivially on the vertices of
$\Gamma$. But the set of vertices of $\Gamma$ is dense in $\X$, so $g^n=\unit$, which finishes the proof of the theorem.
\end{proof}

\begin{proposition}
\label{prop:embeddingfull}
Let $G$ be a fragmentation of a  minimal action of the dihedral group. Then $G$ can be embedded into
the topological full group of a minimal action of $\Z$ on a Cantor set, and hence it is amenable.
\end{proposition}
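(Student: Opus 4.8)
The plan is to follow the construction of N.~Matte~Bon \cite{mattebon:grigochuk}: realize $G$ as a subgroup of the topological full group of the minimal subshift built from the orbital graphs of $G$, and then invoke the theorem of Juschenko and Monod \cite{juschenkomonod}. First I would set up the subshift. For a regular point $\zeta$, the orbital graph $\Gamma_\zeta$ of $G$ is a decorated bi-infinite chain; after fixing a direction on it, encode it by the bi-infinite word $w_\zeta=\ldots x_{-1}x_0x_1\ldots$ over a finite alphabet in which $x_n$ records the isomorphism type of the ball of radius $1$ about the $n$-th vertex — which pieces of $\mathcal{P}_A$ and $\mathcal{P}_B$ carry the two incident edges, and whether that vertex is fixed by $a$ or by $b$ — exactly as in the discussion preceding Theorem~\ref{th:two}. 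Let $\mathcal{Y}$ be the closure, inside the full shift on this alphabet, of the set of all such words $w_\zeta$ with $\zeta$ regular, taken over \emph{both} orientations of each $\Gamma_\zeta$, and let $T$ be the shift; concretely, $T$ re-roots an oriented orbital graph at the next vertex.

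Next I would check that $(\mathcal{Y},T)$ is a minimal Cantor $\Z$-system. Minimality is repetitivity of the orbital graphs: by Proposition~\ref{prop:regularcontainment} every ball $B_\zeta(r)$ of a regular point occurs within bounded distance in every orbital graph of $G$, and in both orientations (see the corollaries to Proposition~\ref{pr:Lambdai}); hence every finite subword of every element of $\mathcal{Y}$ occurs in every element of $\mathcal{Y}$, so every $T$-orbit is dense. The space $\mathcal{Y}$ is compact, metrizable and totally disconnected, and it is perfect whenever it is infinite, i.e.\ whenever the orbital graphs of $G$ are not periodic as decorated chains. The exceptional situations in which $\mathcal{Y}$ is finite (e.g.\ $G\cong D_\infty$, or a fragmentation carried by a dynamically periodic clopen partition) have to be dealt with separately: there $G$ is readily seen to be elementary amenable and to embed into the topological full group of an odometer — or, uniformly, one replaces this step by the fact that the orbit equivalence relation of a minimal $D_\infty$-action on a Cantor set is the orbit relation of a minimal $\Z$-action (affability, in the spirit of Giordano, Putnam and Skau).

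Then I would embed $G$ into $\full(\langle T\rangle,\mathcal{Y})$. Send $g\in G$ to the homeomorphism $\widehat g$ of $\mathcal{Y}$ that moves the root of a word $w$ from its $0$-th vertex $v$ to the vertex $g(v)$, keeping the orientation. As observed at the beginning of the proof of Theorem~\ref{th:main}, if $m$ is the length of $g$ in $A\cup B$, then $g(v)$ lies in $B_v(m)$ and is computed from the labelled ball $B_v(m+1)$ by a rule independent of the orbit; hence $\widehat g$ acts by $w\mapsto T^{k_g(w)}(w)$ with $k_g\colon\mathcal{Y}\to\Z$ locally constant and $|k_g|\le m$, so $\widehat g\in\full(\langle T\rangle,\mathcal{Y})$. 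The map $g\mapsto\widehat g$ is a homomorphism, and it is injective: if $\widehat g=\mathrm{id}$ then, $T$ being aperiodic, $k_g\equiv 0$, so $g$ fixes every regular point of $\X$; since the regular points are dense and $G$ acts faithfully by homeomorphisms of $\X$, we get $g=\unit$. Finally $\full(\langle T\rangle,\mathcal{Y})$ is amenable by \cite{juschenkomonod}, hence so is its subgroup $G$.

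The main obstacle is the passage to a genuine $\Z$-action. The orbital graphs of $G$ come from a dihedral, not a cyclic, action, so they carry no canonical orientation and the naive ``shift by one vertex'' is not a $\Z$-action; passing to the space of \emph{oriented} pointed orbital graphs repairs this, and the key point that makes it work is that every $g\in G$ displaces each vertex of each orbit by at most its word-length, so it cannot interchange the two ends of the chain and therefore preserves orientations. In particular the reflections $a$ and $b$ are harmless even when they occur as generators: on each orbital graph they act not as unbounded reflections but, vertex by vertex, as displacement-one edge swaps, hence again lie in the topological full group. The only remaining delicate point is to ensure that $(\mathcal{Y},T)$ is honestly minimal on a Cantor set, i.e.\ to exclude (or separately treat) the degenerate cases with periodic orbital graphs; everything else is a routine verification.
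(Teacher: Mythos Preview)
Your proposal is correct and follows essentially the same route as the paper's proof: build the subshift of decorated orbital graphs \`a la Matte~Bon, realize each generator as a locally constant power of the shift, and invoke Juschenko--Monod. The paper's version is terser---it encodes edges by pieces in $\mathcal{P}_A\cup\mathcal{P}_B$ rather than vertex-balls, fixes a single orientation from one regular point, and writes the action of each $s\in A\cup B$ by an explicit three-case formula in terms of $\pi_{w(0)}(s)$ and $\pi_{w(-1)}(s)$---thereby sidestepping the orientation discussion and the degenerate periodic cases that you take care to treat separately.
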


\begin{proof}
We repeat the argument of~\cite{mattebon:grigochuk}.
Let $\Gamma_\zeta$ be the orbital graph of a regular point
$\zeta\in\X$. It is a bi-infinite chain. Identify the edges of
the chain with integers, so that adjacent edges are identified with integers $n, m$ such that $|n-m|=1$. Let $w_\zeta=(a_n)_{n\in\Z}$
be the corresponding sequence of elements of $\mathcal{P}_A\cup\mathcal{P}_B$ describing the connections between the adjacent vertices, see~\ref{ss:orbitalgraphs}.

Let $\mathcal{W}$ be the set of all sequences $w=\ldots
y_{-1}y_0y_1\ldots$ such that every finite subword of $w$ is a subword
of $w_\zeta$. The set $\mathcal{W}$ is obviously a closed shift-invariant set. Note that for every finite subword $u$ of $w_\zeta$
there exists $R>0$ such that for every $i\in\Z$ there exists $j\in\Z$
such that $|i-j|\le R$ and $x_jx_{j+1}\ldots x_{j+|u|-1}=u$,
see Proposition~\ref{prop:regularcontainment}. This in turn implies that the action of the shift on
$\mathcal{W}$ is minimal. Denote by $\sigma:\mathcal{S}\arr\mathcal{S}$ the shift, which is given by $\sigma(w)(n)=w(n+1)$.

The action of every element $s\in A\cup B$ on a vertex $\eta$ of
$\Gamma_\zeta$ is uniquely determined by the labels of the two edges adjacent to $\eta$. This defines a natural action of $s$ on $\mathcal{W}$ given by the rule
\[
s(w)=\left\{\begin{array}{rl} \sigma(w) & \text{if $\pi_{w(0)}(s)=1$;}\\ \sigma^{-1}(w) & \text{if $\pi_{w(-1)}(s)=1$;}\\ w & \text{otherwise.}\end{array}\right.
\]
If $w$ describes the orbital graph $\Gamma_\zeta$, then $s(w)$ represents the orbital graph of $\Gamma_{s(\zeta)}$.

It is easy to see that the action of $s$ on $\mathcal{S}$ is by an element of the full group $\full(\langle\sigma\rangle, \mathcal{S})$, so that we get an isomorphic embedding of $G$ into $\full(\langle\sigma\rangle, \mathcal{S})$. The result of K.~Juschenko and N.~Monod from~\cite{juschenkomonod} implies now amenability of $G$.
\end{proof}

\begin{proposition}
Suppose that the action of $\langle a, b\rangle$ on $\X$ is
expansive. Let $G$ be a fragmentation of the dihedral group. Then the action of $G$ on $\X$ is also
expansive, and the
group $\alt(G, \X)$ is simple and finitely generated.
\end{proposition}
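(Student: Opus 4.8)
The plan is to prove the three assertions in turn, relying on results already established.

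\medskip

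\textbf{Expansiveness of the $G$-action.} First I would show that if $\langle a,b\rangle$ acts expansively, then so does $G=\langle A\cup B\rangle$. Since every element of $A\cup B$ agrees pointwise on $\X$ with either the identity, with $a$, or with $b$, the orbit $G\zeta$ is contained in the orbit $\langle a,b\rangle\zeta$. If $d(g(\zeta_1),g(\zeta_2))<\delta$ for all $g\in G$, I want to conclude $d(h(\zeta_1),h(\zeta_2))<\delta$ for all $h\in\langle a,b\rangle$, so that the expansiveness constant for $\langle a,b\rangle$ forces $\zeta_1=\zeta_2$. The point is that $A$ and $B$ \emph{generate} $a$ and $b$ in the following weak sense: for each $\zeta$ there is $a'\in A$ with $a'(\zeta)=a(\zeta)$ and $b'\in B$ with $b'(\zeta)=b(\zeta)$. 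So any word in $a,b$ evaluated at a point $\zeta_1$ (or $\zeta_2$) can be realized, \emph{letter by letter along that specific trajectory}, by a word in $A\cup B$; the subtlety is that the same $G$-word must work simultaneously at $\zeta_1$ and $\zeta_2$. The clean way is to invoke the characterization stated in the excerpt: expansiveness of $(\langle a,b\rangle,\X)$ means there is a $\langle a,b\rangle$-equivariant embedding of $\X$ into $A_0^{\langle a,b\rangle}$ for a finite alphabet $A_0$; restricting the coordinate functions to $G\le\langle a,b\rangle$-equivariance (or using Proposition~\ref{pr:Zsubshift}) directly exhibits a $G$-equivariant embedding of $\X$ into a subshift over a finite alphabet, hence $G$ acts expansively. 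Concretely, the itinerary partition $\mathcal U$ from the proof of Proposition~\ref{pr:Zsubshift} already separates points and is refined by the pieces $\mathcal P_A,\mathcal P_B$, and $G$ moves these itineraries by bounded shifts, so the map $\zeta\mapsto w_\zeta$ of Proposition~\ref{prop:embeddingfull} (suitably enriched to record the full itinerary) is the required $G$-equivariant homeomorphism onto a subshift.

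\medskip

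\textbf{Minimality of the $G$-action.} Next, minimality: every $G$-orbit must be dense. Since $\langle a,b\rangle$ acts minimally and each $G$-orbit sits inside a $\langle a,b\rangle$-orbit, it suffices to show $G\zeta$ is dense in $\overline{\langle a,b\rangle\zeta}=\X$. This follows from Proposition~\ref{prop:regularcontainment} applied to $G$, or more directly from the orbital-graph picture: the orbital graph $\Gamma_\zeta$ of $G$ is a bi-infinite chain (a decorated version of the chain for $\langle a,b\rangle$), so $G\zeta$ is \emph{all} of $\langle a,b\rangle\zeta$ up to at most the one or two points where a generator of $D_\infty$ has a fixed point. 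A cleaner argument: take any nonempty open $U\subset\X$; by minimality of $\langle a,b\rangle$ there is a word $w$ in $a,b$ with $w(\zeta)\in U$; walking along the trajectory of $\zeta$ under $w$ and replacing each letter $a$ (resp.\ $b$) by an element of $A$ (resp.\ $B$) that acts as $a$ (resp.\ $b$) at the relevant point, we obtain $g\in G$ with $g(\zeta)=w(\zeta)\in U$. Hence $G\zeta$ meets every nonempty open set, i.e.\ is dense, and $G$ acts minimally.

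\medskip

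\textbf{Simplicity and finite generation of $\alt(G,\X)$.} With $G$ acting minimally and expansively on the Cantor set $\X$, and with infinite orbits (the orbital graphs are infinite chains), Theorem~\ref{th:fullgrnek} applies \emph{verbatim}: $\alt(G,\X)$ is simple and is finitely generated. This is the step that does essentially all the work, but it is precisely the cited result, so nothing new is needed here beyond checking its hypotheses, which we have just verified.

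\medskip

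\textbf{Main obstacle.} The only genuinely non-routine point is the expansiveness step, specifically the worry that a word in $a,b$ admits different realizations as a $G$-word at two different points $\zeta_1,\zeta_2$, so that ``small $G$-displacement'' might not imply ``small $\langle a,b\rangle$-displacement''. The resolution is to argue at the level of itineraries rather than trying to match words: one shows $\zeta\mapsto$ (enriched itinerary in a finite alphabet) is a $G$-equivariant homeomorphism onto its image, which is automatically a subshift, so that the abstract characterization of expansive Cantor actions yields expansiveness of $G$ directly. Once that is in hand, minimality is routine and simplicity/finite generation is a direct citation of Theorem~\ref{th:fullgrnek}.
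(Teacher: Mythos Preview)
Your overall structure is right, and the minimality check and the final appeal to Theorem~\ref{th:fullgrnek} are fine. The substantive difference is in the expansiveness step, and here you identify the real obstacle correctly (one needs a single $G$-word that simulates a given $\langle a,b\rangle$-word \emph{simultaneously} at two points) but you do not actually resolve it. Your first suggestion, ``restricting the coordinate functions to $G\le\langle a,b\rangle$-equivariance'', is based on a false premise: $G$ is \emph{not} a subgroup of $\langle a,b\rangle$ in general (elements of $A$ need not lie in $\langle a,b\rangle$), so there is nothing to restrict. Your second suggestion, the enriched itinerary map $\zeta\mapsto \tilde w_\zeta$, can be made to work, but the sentence ``is the required $G$-equivariant homeomorphism onto a subshift'' hides exactly the point at issue: to show injectivity (equivalently, the metric expansiveness condition) you still must argue that if $g(\zeta_1)$ and $g(\zeta_2)$ lie in the same piece for every $g\in G$, then $h(\zeta_1)$ and $h(\zeta_2)$ lie in the same $\mathcal U$-piece for every $h\in\langle a,b\rangle$. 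Unwinding that inductively along a word in $a,b$ is precisely the two-point simulation you flagged as the obstacle; you have relocated it, not removed it.

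The paper meets this obstacle head-on with a short combinatorial observation you did not find: given any two pieces $P_1,P_2\in\mathcal P_A$ and elements $a_1,a_2\in A$ with $a_i|_{P_i}=a|_{P_i}$, one of $a_1,\ a_2,\ a_1a_2$ acts as $a$ on $P_1\cup P_2$ (just compare the values of $\pi_{P_1},\pi_{P_2}\in\Z/2\Z$). Hence for \emph{any} pair $\zeta,\eta\in\X$ there is $a'\in A$ with $a'(\zeta)=a(\zeta)$ and $a'(\eta)=a(\eta)$, and similarly for $B$. Iterating letter by letter, every $g\in\langle a,b\rangle$ is matched by some $g'\in G$ with $g'(\zeta)=g(\zeta)$ and $g'(\eta)=g(\eta)$; so $d(g(\zeta),g(\eta))<\delta$ for all $g\in G$ forces the same for all $g\in\langle a,b\rangle$, and expansiveness follows. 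This is both shorter and strictly stronger than what the itinerary route gives (it works for arbitrary pairs, not only $\delta$-close ones), and it makes the passage to Theorem~\ref{th:fullgrnek} immediate.
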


Note that if $\alt(G, \X)$ is finitely generated, then it is a
subgroup of a fragmentation of the dihedral group with the same groups of germs of points. In particular, if there is a purely non-Hausdorff singularity $\xi\in\X$, then $\alt(G, \X)$ is periodic by Theorem~\ref{th:main} and amenable by Proposition~\ref{prop:embeddingfull}.

\begin{proof}
Let $\delta>0$ be such that $d(g(\zeta), g(\eta))<\delta$ for all
$g\in\langle a, b\rangle$ implies $\zeta=\eta$.

Consider an arbitrary pair $P_1, P_2$ of pieces of the fragmentation $A$. There exist $a_i\in A$ such that $a_i|_{P_i}=a|_{P_i}$. Either $a_1|_{P_2}=a|_{P_2}$, or $a_1|_{P_2}=\unit|_{P_2}$. We also have that either $a_2|_{P_1}=a|_{P_1}$ or $a_2|_{P_1}=\unit|_{P_1}$. It follows that for some $a'\in\{a_1, a_2, a_1a_2\}$ we have $a'|_{P_1\cup P_2}=a|_{P_1\cup P_2}$. We have then $a'|_{\overline{P_1}\cup\overline{P_2}}=a|_{\overline{P_1}\cup\overline{P_2}}$.

It follows that for every two points $\zeta, \eta\in\X$ there exists
$a'\in A$ such that $a'(\zeta)=a(\zeta)$ and
$a'(\eta)=a(\eta)$. Similarly, there exists $b'\in B$ such that $b'(\zeta)=b(\zeta)$ and $b'(\eta)=b(\eta)$.

Consequently, for every $g\in\langle a, b\rangle$
there exists $g'\in G$ such that $g'(\zeta)=g(\zeta)$ and
$g'(\eta)=g(\eta)$.

Suppose that $d(g(\zeta), g(\eta))<\delta$ for all $g\in G$. Then, by
the above, we have $d(g(\zeta), g(\eta))<\delta$ for all $g\in\langle
a, b\rangle$, which implies, by expansivity of $(\langle a, b\rangle,
\X)$, that $\zeta=\eta$. Thus, $(G, \X)$ is also expansive.
Properties of $\alt(G, \X)$ follow now from Theorem~\ref{th:fullgrnek}.
\end{proof}

\section{Intermediate growth}
\label{s:growth}

\subsection{Inverted orbits}
Let $S$ be a finite symmetric generating set of a group $G$ acting on a set $\X$. Choose a point $\xi\in\X$.

Let $g=g_1g_2\ldots g_n$, $g_i\in S$, be a word over $S$ (i.e., an element of the free monoid $S^*$). Following~\cite{BE:permextensions}, we define the \emph{inverted orbit} $\mathcal{O}_\xi(g)$ as the set
\[\mathcal{O}_\xi(g)=\{g_1(\xi), g_1g_2(\xi), g_1g_2g_3(\xi), \ldots, g_1g_2\cdots g_n(\xi)\},\]
where the corresponding products of $g_i$ are considered to be elements of $F$.

\begin{defi}
Let $g=g_1g_2\cdots g_n$ be an element of $S^*$. We say that a pair $(i, j)$ of indices $1\le i<j\le n$ is a \emph{first return} of $\xi$ in the word $g$ if $g_{i+1}g_{i+2}\cdots g_j(\xi)=\xi$ and $g_{k+1}\cdots g_j(\xi)\ne\xi$ for all $i<k<j$. The number $j-i$ is called the \emph{length} of the first return.
\end{defi}

For example, if $g_i(\xi)=\xi$, then $(i-1, i)$ is a first return of length 1. Note that we do not include the cases $g_1g_2\ldots g_j(\xi)=\xi$ as first returns.

\begin{lemma}
\label{lem:firstreturns}
The number of first returns of $\xi$ in $g=g_1g_2\cdots g_n$ is equal to $n-|\mathcal{O}_\xi(g)|$.
\end{lemma}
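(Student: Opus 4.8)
The plan is to match first returns of $\xi$ in $g$ with the repetitions occurring in the sequence obtained by applying the partial products of $g$ to $\xi$, and then to count those repetitions in two ways.

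First I would introduce, for $0\le j\le n$, the partial product $p_j=g_1g_2\cdots g_j$, regarded as an element of $G$ (only its action on $\xi$ matters, so it is harmless to pass from $F$ to $G$); in particular $p_0=\unit$ and $\mathcal{O}_\xi(g)=\{p_1(\xi),p_2(\xi),\ldots,p_n(\xi)\}$. The single computation needed is the cancellation identity $p_i^{-1}p_j=g_{i+1}g_{i+2}\cdots g_j$ for $i\le j$, which gives
\[
g_{i+1}g_{i+2}\cdots g_j(\xi)=\xi\quad\Longleftrightarrow\quad p_i(\xi)=p_j(\xi).
\]
With this, the definition of a first return translates verbatim: a pair $(i,j)$ with $1\le i<j\le n$ is a first return of $\xi$ in $g$ exactly when $p_i(\xi)=p_j(\xi)$ and $p_k(\xi)\ne p_j(\xi)$ for all $k$ with $i<k<j$, i.e.\ exactly when $i$ is the largest index in $\{1,2,\ldots,j-1\}$ with $p_i(\xi)=p_j(\xi)$.

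Next I would call an index $j\in\{1,\ldots,n\}$ \emph{repeated} if $p_j(\xi)$ occurs among $p_1(\xi),\ldots,p_{j-1}(\xi)$. By the translated condition, a repeated index $j$ is the second coordinate of exactly one first return (take $i$ maximal with $p_i(\xi)=p_j(\xi)$; it lies in $\{1,\ldots,j-1\}$, so the constraint $i\ge1$ is met automatically), and a non-repeated index is the second coordinate of none; since the second coordinate of a first return determines the first, $(i,j)\mapsto j$ is a bijection from the set of first returns onto the set of repeated indices. On the other hand, the non-repeated indices are precisely those at which a value appears for the first time in $p_1(\xi),\ldots,p_n(\xi)$, and there is exactly one of these for each element of $\mathcal{O}_\xi(g)$; hence there are $|\mathcal{O}_\xi(g)|$ non-repeated indices among the $n$ indices, so $n-|\mathcal{O}_\xi(g)|$ repeated ones. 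Comparing the two descriptions of the number of repeated indices yields the lemma.

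The whole argument is elementary index bookkeeping, so I do not expect a real obstacle; the only delicate point is keeping the convention $i\ge1$ straight — a ``direct'' return $p_j(\xi)=\xi=p_0(\xi)$ that is not preceded by an earlier return is deliberately excluded, which is exactly why ``repeated'' is defined using only the indices $1,\ldots,j-1$. One just has to check that this bookkeeping is consistent with the translated first-return condition, after which the two counts line up immediately.
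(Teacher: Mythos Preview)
Your argument is correct and is essentially the paper's own proof: the paper writes $\xi_j=g_1\cdots g_j(\xi)$ (your $p_j(\xi)$), observes that $(i,j)$ is a first return iff $\xi_i=\xi_j$ and $\xi_k\ne\xi_j$ for $i<k<j$, and then counts first returns by grouping the indices $1,\ldots,n$ according to the value $\xi_j$, getting $m-1$ first returns from each value with $m$ occurrences. Your bijection between first returns and ``repeated'' indices is just a rephrasing of the same count.
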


\begin{proof}
Denote \[\xi_1=g_1(\xi),\quad \xi_2=g_1g_2(\xi),\quad\ldots,\quad \xi_n=g_1g_2\ldots g_n(\xi).\]
A pair $(i, j)$ is a first return if and only if $\xi_i=\xi_j$
and $\xi_k\ne\xi_j$ for $i<k<j$.
It follows that if  $\xi_{i_1}=\xi_{i_2}=\ldots=\xi_{i_m}$ is the
list of all instances of a given element of $\mathcal{O}_\xi(g)$,
and $i_1<i_2<\ldots i_m$, then $(i_1, i_2)$, $(i_2, i_3), \ldots,
(i_{m-1}, i_m)$ are first returns, and that every first return
appears this way exactly once. Note that the number of first returns in this list
is equal to $m-1$. It follows that the total number of first returns
is equal to $n-|\mathcal{O}_\xi(g)|$.
\end{proof}

Denote \[\nu_\xi(n)=\max_{g=g_1g_2\cdots g_n\in S^*}|\mathcal{O}_\xi(g)|.\]
The function $\nu_\xi(n)$ is obviously non-decreasing.

\begin{lemma}
\label{lem:subadditivenu}
For all $m, n\ge 0$ we have
\[\nu_\xi(m+n)\le\nu_\xi(m)+\nu_\xi(n).\]
\end{lemma}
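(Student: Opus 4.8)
The plan is to prove subadditivity of $\nu_\xi$ by concatenating two near-optimal words and tracking how the inverted orbit of the product relates to the inverted orbits of the factors. Fix $m,n\ge 0$. Choose a word $g=g_1g_2\cdots g_{m+n}\in S^*$ realizing the maximum, so that $|\mathcal{O}_\xi(g)|=\nu_\xi(m+n)$. Split it as $g=uv$ where $u=g_1\cdots g_m$ and $v=g_{m+1}\cdots g_{m+n}$. By definition,
\[
\mathcal{O}_\xi(g)=\{g_1\cdots g_k(\xi) : 1\le k\le m+n\}
 =\{u_1\cdots u_k(\xi):1\le k\le m\}\ \cup\ \{u\cdot(v_1\cdots v_\ell)(\xi):1\le\ell\le n\},
\]
where $u_i=g_i$ and $v_j=g_{m+j}$. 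The first set is exactly $\mathcal{O}_\xi(u)$, which has at most $\nu_\xi(m)$ elements. The second set is $u(\mathcal{O}_\eta(v))$ where $\eta=u(\xi)$ — no, more precisely it is $u\bigl(\{v_1\cdots v_\ell\,u^{-1}u(\xi)\}\bigr)$; the cleanest way is to note that $u\cdot v_1\cdots v_\ell(\xi)$ ranges over a set of the same cardinality as $\{v_1\cdots v_\ell(\xi):1\le\ell\le n\}=\mathcal{O}_\xi(v)$, since left translation by the fixed element $u\in G$ is a bijection of $\X$. Hence the second set has at most $|\mathcal{O}_\xi(v)|\le\nu_\xi(n)$ elements.

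Therefore $\nu_\xi(m+n)=|\mathcal{O}_\xi(g)|\le|\mathcal{O}_\xi(u)|+|u(\mathcal{O}_\xi(v))|\le\nu_\xi(m)+\nu_\xi(n)$, which is the claimed inequality. I would present it in essentially this order: first recall the definition and split the maximizing word, then identify the two pieces of $\mathcal{O}_\xi(g)$, then invoke injectivity of $u$ acting on $\X$ to bound the second piece by $\nu_\xi(n)$.

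The only subtlety — and it is minor — is the bookkeeping that $u\cdot v_1\cdots v_\ell(\xi)$, as $\ell$ runs from $1$ to $n$, really is the image under the homeomorphism $u$ of the set $\mathcal{O}_\xi(v)=\{v_1\cdots v_\ell(\xi):1\le\ell\le n\}$, so that these two sets have equal cardinality; there is nothing deep here since $u\in G$ acts by a homeomorphism (in particular a bijection) of $\X$. No obstacle is expected: the statement is a routine "split the word, translate by a group element" argument, and the earlier material (the definition of $\mathcal{O}_\xi$ and the fact that $G$ acts by homeomorphisms) is all that is needed.
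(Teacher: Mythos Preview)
Your proof is correct and follows essentially the same approach as the paper: split the word into a prefix of length $m$ and a suffix of length $n$, observe that $\mathcal{O}_\xi(g)=\mathcal{O}_\xi(u)\cup u\bigl(\mathcal{O}_\xi(v)\bigr)$, and bound the two pieces by $\nu_\xi(m)$ and $\nu_\xi(n)$. The only cosmetic differences are that you choose a maximizing word at the outset (harmless, since $S$ is finite) whereas the paper works with an arbitrary word, and your mid-sentence self-correction (``no, more precisely\ldots'') should be cleaned up in a final write-up.
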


\begin{proof}
Consider a word $g_1g_2\cdots g_{n+m}\in S^*$ of length $n+m$. Then
\begin{multline*}\mathcal{O}_\xi(g_1g_2\cdots g_{n+m})=\{g_1(\xi),
  g_1g_2(\xi), \ldots, g_1g_2\cdots g_m(\xi)\}\cup \\
g_1g_2\cdots g_m\left(\left\{g_{m+1}(\xi),
g_{m+2}g_{m+1}(\xi), \ldots,
g_{m+1}g_{m+2}\cdots g_{m+n}(\xi)\right\}\right)=\\
\mathcal{O}_\xi(g_1g_2\cdots g_m)\cup g_1g_2\cdots
g_m(\mathcal{O}_\xi(g_{m+1}g_{m+2}\cdots g_{m+n})).
\end{multline*}
It follows that
\[|\mathcal{O}_\xi(g_1g_2\cdots g_{n+m})|\le
|\mathcal{O}_\xi(g_1g_2\cdots g_m)|+|\mathcal{O}_\xi(g_{m+1}g_{m+2}\cdots g_{m+n})|\le\nu_\xi(n)+\nu_\xi(m)\]
for every word $g_1g_2\cdots g_{m+n}$, hence $\nu_\xi(m+n)\le\nu_\xi(m)+\nu_\xi(n)$.
\end{proof}

We will also need the following general fact.

\begin{lemma}
\label{lem:doubling}
Suppose that a function $f:\mathbb{N}\arr\mathbb{N}$ is non-decreasing and satisfies $f(n+m)\le f(n)+f(m)$ for all $m, n\in\mathbb{N}$.

Then for all
$n\ge m$, we have $\frac{f(n)}{n}\le \frac{2f(m)}{m}$.
\end{lemma}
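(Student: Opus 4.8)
The plan is the standard division-with-remainder argument underlying Fekete's subadditive lemma. Fix $n\ge m$ and put $q=\lceil n/m\rceil$, so that $q\ge 1$ and $(q-1)m<n\le qm$; in particular the integer $r:=n-(q-1)m$ satisfies $1\le r\le m$.

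First I would decompose $n$ as a sum of $q$ positive integers, $q-1$ of them equal to $m$ and the last equal to $r$, and apply the subadditivity hypothesis $q-1$ times:
\[
f(n)\le (q-1)f(m)+f(r).
\]
Since $f$ is non-decreasing and $r\le m$, we have $f(r)\le f(m)$, hence $f(n)\le qf(m)$.

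Next I would bound $q$ in terms of $n/m$. Because $n\ge m$ we have $n/m\ge 1$, so $q=\lceil n/m\rceil\le n/m+1\le 2n/m$. Substituting into the previous inequality gives $f(n)\le qf(m)\le \tfrac{2n}{m}f(m)$, and dividing through by $n$ yields $\tfrac{f(n)}{n}\le\tfrac{2f(m)}{m}$, which is the claim.

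There is essentially no obstacle here; the one point worth getting right is to take the remainder in the range $1\le r\le m$ (rather than the usual $0\le r<m$), so that the bound $f(r)\le f(m)$ is available and the case $m\mid n$ needs no separate treatment. The constant $2$ comes solely from the crude estimate $\lceil n/m\rceil\le 2n/m$, valid exactly when $n\ge m$.
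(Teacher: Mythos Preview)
Your proof is correct and follows essentially the same division-with-remainder argument as the paper's own proof; the only cosmetic difference is that you take $q=\lceil n/m\rceil$ with remainder $1\le r\le m$, whereas the paper takes $q=\lfloor n/m\rfloor$ with $0\le r<m$ and bounds $qf(m)+f(r)\le (n/m)f(m)+f(m)$. Both reach $f(n)\le (2n/m)f(m)$ by the same mechanism, and your choice of remainder range is arguably cleaner since it avoids any worry about $f(0)$.
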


\begin{proof}
Then there exist $q\in[0, n/m]\cap\mathbb{N}$ and $r\in 0, 1, \ldots, m-1$ such that $n=qm+r$.
Then
\[f(n)=f(qm+r)\le q f(m)+f(r),\]
hence
\begin{multline*}\frac{f(n)}{n}\le \frac{q f(m)+f(r)}n\le\\
\frac{n f(m)/m+f(m)}{n}=
\frac{f(m)}{m}+\frac{f(m)}{n}=\\
\frac{f(m)}{m}\left(1+\frac{m}{n}\right)\le \frac{2f(m)}{m}\end{multline*}
\end{proof}

\subsection{Inverted orbits of linearly repetitive actions}

Let $G=\langle A\cup B\rangle$ be a fragmentation of a minimal action of the dihedral group $\langle a, b\rangle$.

\begin{proposition}
\label{pr:uppernu}
If the action of $G$ is linearly repetitive and there exists a purely non-Hausdorff singularity $\xi\in\X$, then there exist positive constants $C_1, C_2$ such that
\[\nu_{\zeta}(n)\le C_1n e^{-C_2\sqrt{\log n}}\]
for all $\zeta\in\X$ and $n\ge 1$.
\end{proposition}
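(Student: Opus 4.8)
The plan is to bound the size of the inverted orbit $\mathcal{O}_\zeta(g)$ for a word $g=g_1\cdots g_n$ by controlling how often the trajectory $\zeta_k=g_1\cdots g_k(\zeta)$ must \emph{turn around} inside the one-dimensional orbital graph $\Gamma_\zeta$. By Lemma~\ref{lem:firstreturns}, it suffices to produce a lower bound of the form $n-|\mathcal{O}_\zeta(g)|\ge C_1 n e^{-C_2\sqrt{\log n}}$ on the number of first returns of $\zeta$. The underlying mechanism is the ``reflector'' phenomenon already exploited in Theorem~\ref{th:main}: whenever the trajectory wanders into a small neighborhood of the purely non-Hausdorff singularity $\xi$, the germ structure forces it to bounce back, and each bounce contributes a first return. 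Linear repetitivity is what turns this qualitative statement into a quantitative one: every segment of $\Gamma_\zeta$ isomorphic to a suitable neighborhood of $\xi$ reappears with gaps bounded by $K$ times its length, so reflectors are spread along $\Gamma_\zeta$ with controlled density at every scale.

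The key steps, in order. First, fix a scale parameter $r$ and let $\Sigma_r$ be a segment of $\Gamma_\xi$ long enough (depending on $r$) that, by the argument inside the proof of Lemma~\ref{lem:returning}, any trajectory of $g_{i+1}\cdots g_j$ (with $j-i\le r$) that enters the central copy of a fixed short segment $\Delta$ inside $\Sigma_r$ and then tries to exit on the ``far'' side is forced to return to $\Delta$ — the projections $\lambda_i,\lambda_j$ of the graph of germs produce the contradiction exactly as before, now applied not to powers $g^k$ but to the prefixes of a single word. One needs $|\Sigma_r|$ to grow at most like $e^{C\sqrt{\log r}}$ in $r$; this is the crucial quantitative input, and it should follow from the fact that the ``return time'' one must wait is itself governed by how deep into the $V_k$-filtration of $\xi$ one must go, which is logarithmic in the displacement, iterated over scales. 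Second, using linear repetitivity (Definition~\ref{def:linearlirecurrent}) with constant $K$, note that copies of $\Sigma_r$ occur along $\Gamma_\zeta$ with gaps at most $K|\Sigma_r|$. Third, run a multiscale/dyadic argument: cut the word $g=g_1\cdots g_n$ into blocks and, at each scale $r=2^\ell$, count the first returns of length $\le r$ contributed by bounces off $\Sigma_r$-reflectors encountered by a trajectory that has net displacement comparable to $r$. Summing the per-scale contributions and optimizing the choice of the number of scales (balancing the gain $1/|\Sigma_r|\sim e^{-C\sqrt{\log r}}$ against the range of $r$) yields the bound $n-|\mathcal{O}_\zeta(g)|\gtrsim n e^{-C_2\sqrt{\log n}}$. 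Fourth, invoke Lemma~\ref{lem:firstreturns} to convert this into $|\mathcal{O}_\zeta(g)|\le C_1 n e^{-C_2\sqrt{\log n}}$, and take the max over $g$ to get $\nu_\zeta(n)$; uniformity in $\zeta$ comes from the fact that all orbital graphs of $G$ share the same finite set of local patterns (the corollaries at the end of Section~\ref{s:fragmentations}), so $\Sigma_r$ embeds near every vertex of every $\Gamma_\zeta$.

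The main obstacle I expect is the second step: proving that the required segment length $|\Sigma_r|$ — equivalently, the maximal detour time before a trajectory bouncing off a reflector is guaranteed to have returned — grows only like $e^{O(\sqrt{\log r})}$ rather than, say, polynomially in $r$ (which would only give a $\log$ saving, not the sub-$n^{1-\epsilon}$ bound). This is genuinely a recursive estimate: a reflector effective against displacements up to $r$ is built from the structure of $\Gamma_\xi$ near $\xi$ at depth roughly $\log r$ in the $V_k$-filtration, but establishing that the ``cost'' of each level compounds to only $e^{O(\sqrt{\log r})}$ presumably requires the linear-repetitivity constant to interact with the filtration in a geometric-series fashion across $O(\sqrt{\log r})$ scales — the classic source of the $\exp(\sqrt{\log n})$ bound in Grigorchuk-type growth estimates. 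Packaging this cleanly, and making sure all constants are uniform over $\zeta$ and over the choice of word $g$, is where the real work lies; the conversion via first returns (Lemmas~\ref{lem:firstreturns}, \ref{lem:subadditivenu}, \ref{lem:doubling}) and the final optimization are routine by comparison.
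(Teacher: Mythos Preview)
Your very first reduction is backwards, and the rest of the outline inherits the error. You write that by Lemma~\ref{lem:firstreturns} it suffices to show $n-|\mathcal{O}_\zeta(g)|\ge C_1 n e^{-C_2\sqrt{\log n}}$, and at the end you ``convert'' this into $|\mathcal{O}_\zeta(g)|\le C_1 n e^{-C_2\sqrt{\log n}}$. But $n-|\mathcal{O}|\ge n e^{-C\sqrt{\log n}}$ only gives $|\mathcal{O}|\le n(1-e^{-C\sqrt{\log n}})$, which is essentially the trivial bound $|\mathcal{O}|\le n$. The target estimate says that \emph{almost every} step is a first return: you need $n-|\mathcal{O}_\zeta(g)|\ge n(1-C_1e^{-C_2\sqrt{\log n}})$. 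A scheme that merely exhibits one first return per reflector encountered, summed over scales, can never reach that; you would need to account for nearly all $n$ steps.

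The paper avoids this by arguing in the opposite direction and \emph{recursively}. Rather than bounding $|\Sigma_r|$ a priori, it shows that every element of $\mathcal{O}_\xi(g_1\cdots g_{Mn})$ lying outside a $K_1n$-neighborhood of $\xi$ forces, via the germ covering, a first return of length $>n$ of $\xi$ \emph{or of an auxiliary point $\xi'$} (the limit of the right ends of the nested segments $Z_k$), with bounded multiplicity. Feeding this into Lemma~\ref{lem:firstreturns} and the trivial splitting of $g$ into $M$ blocks of length $n$ yields a genuine contraction for $\nu(n)=\nu_\xi(n)+\nu_{\xi'}(n)$: in terms of $\delta(n)=\nu(n)/n$ one gets $\delta(Mn)\le \frac{3d}{3d+1}\delta(n)+K_2/M$. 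Choosing $M\sim 1/\delta(n)$ makes this $\delta(n_{k+1})\le\rho\,\delta(n_k)$ with $\rho<1$ along a sequence $n_{k+1}\lesssim \delta(n_k)^{-1}n_k$; unwinding gives $n_k\lesssim \rho^{-k^2/2}$, hence $k\sim\sqrt{\log n_k}$ and $\delta(n)\lesssim e^{-C\sqrt{\log n}}$. The $e^{-C\sqrt{\log n}}$ thus emerges from the recursion, not from an independent estimate on reflector sizes. Your hoped-for bound $|\Sigma_r|\lesssim e^{C\sqrt{\log r}}$ is essentially the conclusion restated, and you have no mechanism to establish it; the missing ingredients are the second basepoint $\xi'$, the ``far inverted-orbit point $\Rightarrow$ long first return'' map with bounded fibers, and the self-improving inequality for $\delta$.
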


\begin{proof}
We may assume that the purely non-Hausdorff singularity $\xi$ is a fixed point of one of the generators $a, b$ (see Corollary~\ref{cor:stabaa}). Let it be $a$. We orient $\Gamma_\xi$ so that the vertex $\xi$ is on the left, and $\Gamma_\xi$ is infinite to the right. If $\Sigma$ is a segment or a ray infinite to the right, then a \emph{left sub-segment} of $\Sigma$ is a sub-segment $\Sigma'$ such that the left end of $\Sigma'$ coincides with the left end of $\Sigma$. In a similar way the notion of a \emph{right sub-segment} is defined.

Let $\{P_0, P_1, \ldots, P_{d-1}\}$ the set of all pieces of the fragmentation $A$ that accumulate on $\xi$. Let $\Lambda_i$ be the limit of the orbital graphs $\Gamma_{\zeta_n}$ for regular points $\zeta_n$ converging to $\xi$ inside $P_i$. Then $\Lambda_i$ is isomorphic to $\Gamma_\xi^{-1}\edge{P_i}\Gamma_\xi$, see Proposition~\ref{pr:Lambdai}. For a natural number $n$, we denote by $P_n$ the label $P_i$ for $i\equiv n\pmod{d}$.

Take an arbitrary left sub-segment $Z_0$ of $\Gamma_\xi$, and define inductively segments $Z_n$ in the following way.

Suppose that we have defined $Z_n$, and let $N$ be the length of $Z_n$. Then there exists a copy $L_n$ of $Z_n^{-1}\edge{P_n}Z_n$ in $\Gamma_\xi$ such that the right end of $L_n$ is at distance at most $KN$ from $\xi$ for some fixed constant $K$ (coming from the estimate of linear repetitivity of orbital graphs). Define $Z_{n+1}$ to be the smallest segment of $\Gamma_\xi$ containing $\xi$ and $L_n$, see Figure~\ref{fig:Zn}.

\begin{figure}
\centering
\includegraphics{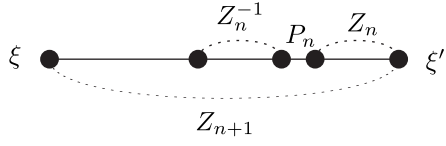}
\caption{Definition of segments $Z_n$}
\label{fig:Zn}
\end{figure}

The length of $Z_{n+1}$ is between $2N$ and $KN$. It follows that length of $Z_n$ is between $2^n|Z_0|$ and $K^n|Z_0|$.

For every $n$ there exists a regular point $\zeta_n$ such that $Z_n$ is isomorphic to a segment $I$ of $\Gamma_{\zeta_n}$ such that the right end of $I$ is equal to $\zeta_n$.
Passing to a convergent subsequence as $n\to\infty$, and using the fact that
$Z_n$ is a right sub-segment of $Z_{n+1}$, we will find a point $\xi'\in\X$ such that for every $n$ there is an isomorphic copy of $Z_n^{-1}\edge{P_n}Z_n$ in $\Gamma_{\xi'}$ with the right end equal to $\xi'$.

Take an arbitrary $n>1$, and let $k_0>0$ be such that $|Z_{k_0-1}|\le n<|Z_{k_0}|$.
Then $2^{k_0-1}|Z_0|<n\le K^{k_0}|Z_0|$, hence there exists positive constants $c_1, c_2$ such that $c_1\log n\le k_0\le c_2\log n$.

Denote by $S$ the generating set $A\cup B$, and let $S^*$ be the free monoid generated by $S$.

Let $M>1$, and consider an arbitrary word $g_1g_2\cdots g_{Mn}\in S^*$.
Let $r_\xi$ and $r_{\xi'}$ be the numbers of first returns of length at most $n$ of $\xi$ and $\xi'$, respectively, in the word $g_1g_2\ldots g_{Mn}$, and let $R_\xi$, $R_{\xi'}$ be the respecitve numbers of first returns of length more than $n$. We have \[|\mathcal{O}_\xi(g_1g_2\cdots g_{Mn})|=Mn-r_\xi-R_\xi,\quad |\mathcal{O}_{\xi'}(g_1g_2\cdots g_{Mn})|=Mn-r_{\xi'}-R_{\xi'}\]
by Lemma~\ref{lem:firstreturns}.

We also have, for every $s=0, 1, \ldots, M-1$,
that the number $|\mathcal{O}_\xi(g_{sn+1}\cdots g_{(s+1)n})|$ is
equal to $n$ minus the number of first returns of $\xi$ in the word $g_{sn+1}g_{sn+2}\cdots g_{(s+1)n}$. Every such a first return is a first return of
$\xi$ in the word $g_1g_2\cdots g_{Mn}$ and its length is not more than $n$. Since the words $g_{sn+1}g_{sn+2}\cdots g_{(s+1)n}$ do not overlap, we get
\begin{multline*}Mn-r_\xi\le \\ |\mathcal{O}_\xi(g_1\cdots
g_n)|+|\mathcal{O}_\xi(g_{n+1}\cdots
g_{2n})|+\cdots+|\mathcal{O}_\xi(g_{(M-1)n+1}\cdots g_{Mn})|\le\\
M\nu_\xi(n),
\end{multline*}
hence
\[|\mathcal{O}_\xi(g_1g_2\cdots g_{Mn})|\le M\nu_\xi(n)-R_\xi,\]
and the same inequality holds for $\xi'$. It follows that
\begin{equation}
\label{eq:ORxi}
|\mathcal{O}_\xi(g_1g_2\cdots g_{Mn})|+|\mathcal{O}_{\xi'}(g_1g_2\cdots g_{Mn})|\le
M(\nu_\xi(n)+\nu_{\xi'}(n))-(R_\xi+R_{\xi'}).
\end{equation}

There exist isomorphic copies of $Z_{k_0+d}^{-1}\edge{P_{k_0}}Z_{k_0+d}$ in $\Gamma_\xi$ (resp., in $\Gamma_{\xi'}$) that are contained in the $K_1n$-neighborhood of $\xi$ (resp., $\xi'$), where $K_1$ is a fixed constant.

Note that if $\Gamma_{\xi'}$ is bi-infinite, we can find two copies of $Z_{k_0+d}^{-1}\edge{P_{k_0}}Z_{k_0+d}$ on both sides of $\xi'$ and both inside the $K_1n$-neighborhood of $\xi'$.
Let us denote the copy of $Z_{k_0+d}^{-1}\edge{P_{k_0}}Z_{k_0+d}$ in $\Gamma_\xi$ by $\Delta$.

The inverted orbit $\mathcal{O}_\xi(g_1g_2\cdots g_{Mn})$ contains at least
\[|\mathcal{O}_\xi(g_1g_2\cdots g_{Mn})|-K_1n\]
elements outside the $K_1n$-neighborhood of
$\xi$.

Suppose that $\zeta=g_1g_2\cdots g_t(\xi)$ is one of them. Consider the path
\[\gamma=(\xi, g_t(\xi), g_{t-1}g_t(\xi), \ldots, g_1g_2\cdots
g_t(\xi))\] in $\Gamma_\xi$. It starts in $\xi$ and traverses
$\Delta$. Let $s_1$ be the smallest index (i.e., the last moment) such that
$g_{s_1}g_{s_1+1}\cdots g_t(\xi)$ is the left end of $\Delta$. Let $s_2$ be the largest index (i.e., the first moment) such that $s_2<s_1$ and $g_{s_2}g_{s_2+1}\cdots g_t(\xi)$ is the right end of $\Delta$. Then
\[\gamma_1=(g_{s_1}\cdots g_t(\xi), g_{s_1-1}g_{s_1}\cdots g_t(\xi), \ldots, g_{s_2}g_{s_2+1}\cdots g_t(\xi))\]
is a path starting in the left end of $\Delta$, ending in the right end of $\Delta$, staying all the time inside $\Delta$, and touching its endpoints only at the first and the last moments.

Recall that the graph of germs $\widetilde\Gamma_\xi$ is isomorphic to
the graph $\Xi$ with the set of vertices
$H_\xi\times\Gamma_\xi$, as it is described in Proposition~\ref{pr:graphgerms}.

Consider the covering
$\lambda_i:\Xi\arr\Lambda_i$, where $i\in\{0, 1, \ldots, d-1\}$ is the residue of $k_0$ modulo $d$, and
let $\widetilde\Delta$ be the lift the central part $Z_{k_0+d}^{-1}\edge{P_{k_0}}Z_{k_0+d}\cong\Delta$  of $\Lambda_i$ to $\Xi$ (recall that $Z_{k_0+d}$ is a left
sub-segment of $\Gamma_\xi$). Let $\widetilde\gamma$
be the lift of $\gamma_1$ to $\widetilde\Delta$ starting in the branch $\{\unit\}\times\Gamma_\xi$ of $\Xi$.

The end of $\widetilde\gamma$ belongs to a branch
$\{h\}\times\Gamma_\xi$ of $\Xi$ for some $h\in H_\xi\setminus\{\unit\}$. There exists $i'\in\{0, 1, \ldots, d-1\}$ such that $\pi_{P_{i'}}(h)=0$, since $\xi$ is a purely non-Hausdorff singularity. Let $k'\in\{k_0+1, k_0+2, \ldots, k_0+d\}$ be such that $k'\equiv i'\pmod{d}$.

Denote by $\Delta'$ the central part of $\Delta$ isomorphic to $Z_{k'}^{-1}\edge{P_{k_0}}Z_{k'}$ (it exists, since $Z_{k'}$ is a left sub-segment of $Z_{k_0+d}$). Denote the full preimage of $\Delta'$ in $\Xi$
by $\widetilde \Delta'$.

The path $\widetilde\gamma$ must enter and exit $\widetilde\Delta'$. It enters
$\widetilde\Delta'$ in the branch $\{\unit\}\times\Gamma_\xi$. Consider the segment
$\widetilde\gamma'$ of $\widetilde\gamma$ from the last entering of $\widetilde\Delta'$ in the branch $\{\unit\}\times\Gamma_\xi$ to the first touching the exit from $\widetilde\Delta'$ after that. The exit
must be in a different branch, since otherwise $\widetilde\gamma$ must
touch the entrance of $\widetilde\Delta'$ one more time (again inside the branch $\{\unit\}\times\Gamma_\xi$). The path $\widetilde\gamma'$ always stays inside $\widetilde\Delta'$ and touches the entrance and the exit of $\widetilde\Delta'$ precisely once each. The image of $\widetilde\gamma'$ in $\Delta$ is of the form
\[(g_{l_1}\cdots g_t(\xi),\quad g_{l_1-1}g_{l_1}\cdots g_t(\xi),\quad\ldots,\quad g_{l_2}\cdots g_t(\xi)),\]
for some $s_2\le l_2<l_1\le s_1$,
where the only point in the path equal to the left end of $\Delta'$ is $g_{l_1}\cdots g_t(\xi_i)$ and the only point equal to the right end of $\Delta'$ is $g_{l_2}\cdots g_t(\xi_i)$.

We have a covering map $\widetilde\Delta'\arr (Z_{k'}^{-1}\edge{P_{k'}}Z_{k'})$ equal to the restriction of $\lambda_{i'}:\Xi\arr\Lambda_{i'}$. Note that the segment of $\Gamma_{\xi'}$ of the form $Z_{k'}^{-1}\edge{P_{k'}}Z_{k'}$ with the right end equal to $\xi'$ is isomorphic as a labeled graph to the corresponding central part of $\Lambda_{i'}$. Let $\psi$ be this isomorphism (from the central part of $\Lambda_{i'}$ to the segment of $\Gamma_{\xi'}$). There are two such isomorphisms, and we choose the one mapping the segment $\{0\}\times Z_{k'}$ of $\Lambda_{i'}$ to the right half (the one containing $\xi'$) of the subsegment $Z_{k'}^{-1}\edge{P_{k'}} Z_{k'}$ of $\Gamma_{\xi'}$.

\begin{figure}
\centering
\includegraphics{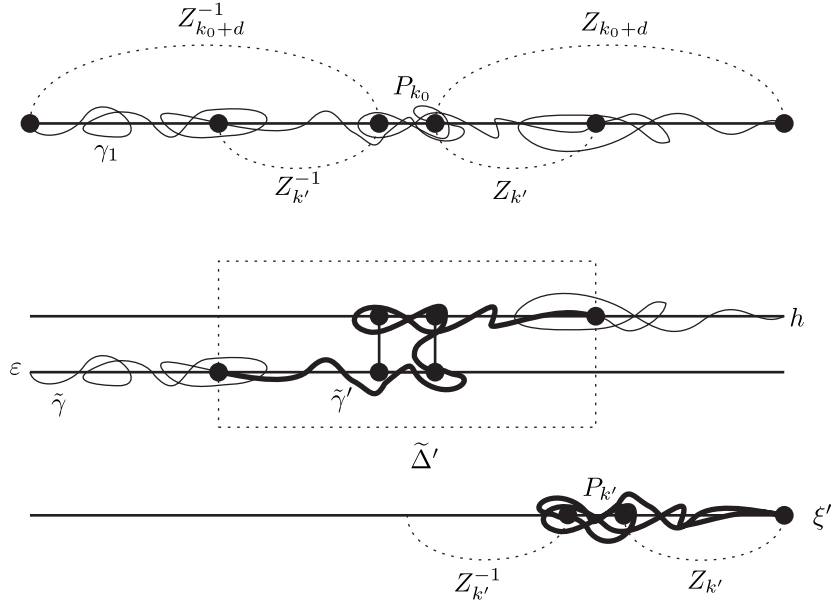}
\caption{Finding a first return of $\xi'$}
\label{fig:growth1}
\end{figure}

If the path $\widetilde\gamma'$ exits $\widetilde\Delta'$ in the branch $\{h\}\times\Gamma_\xi$, then its image under $\psi\circ\lambda_{i'}$ is a path starting in $\xi'$, touching a
vertex of the middle edge $P_{k'}$, and then coming back to
$\xi'$ always staying inside $Z_{k'}^{-1}\edge{P_{k'}}Z_{k'}$. The path $\psi\circ\lambda_{i'}(\widetilde\gamma')$ is equal to
\[(\xi',\quad g_{l_1-1}(\xi'),\quad\ldots,\quad g_{l_2}\cdots g_{l_1-1}(\xi')=\xi'),\]
where $\xi'$ is equal only to the first and to the last vertex in the path, see Figure~\ref{fig:growth1}.

We get a first return $(l_2-1, l_1-1)$ of length at least $2|Z_{k'}|>n$. We know that $g_{l_2}\cdots g_t(\xi)$ is equal to the right end of $\Delta'$, hence the right end of $\Delta'$ and the value of $l_2$ uniquely determines $\zeta$. We see that one such return is produced by at most  $d$ points of the inverted orbit $\mathcal{O}(g_1g_2\cdots g_{Mn})$.

If the path $\widetilde\gamma'$ exits $\widetilde\Delta'$ for the first time in a branch $\{h'\}\times\Gamma_\xi$ labeled by $h'\ne h$, then it has to traverse $\widetilde\Delta'$ at least one more time. It follows that $\widetilde\gamma$ has a subpath $\widetilde\gamma'$ starting at $(h', \xi)\in\Xi$, reaching a preimage of the right end of $\Delta'$, and some time after that coming back to $(h', \xi)$. Take the shortest subpath of this form. Its length is at least $2|Z_{k'}|>n$. Then the image of this subpath under $\lambda_i$ in $\Gamma_\xi$ produces a first return $(l_2-1, l_1-1)$ of $\xi$ of length at least $n$, see Figure~\ref{fig:growth2}. In the same way as in the first case, the image under $g_1g_2\cdots g_{l_2-1}$ of one of the endpoints of the central edge of $\Delta'=(Z_{k'}^{-1}\edge{P_{k_0}} Z_{k'})$ is equal to $\zeta$.
It follows that at most two points of $\mathcal{O}_{\xi_i}(g_1g_2\cdots g_{Mn})$ can produce the same first return this way.

\begin{figure}
\centering
\includegraphics{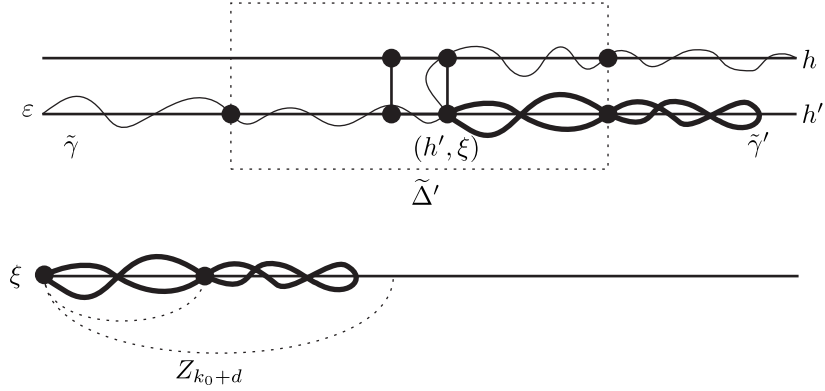}
\caption{Finding a first return of $\xi$}
\label{fig:growth2}
\end{figure}

We see that each $\zeta\in\mathcal{O}_\xi(g_1g_2\cdots g_{Mn})$ outside the $K_1n$-neighborhood of $\xi$ produces either a long first return of $\xi'$ or a long first return of $\xi$, and each such return is produced by at most $d$ points $\zeta$.

The same argument shows that each point of $\mathcal{O}_{\xi'}(g_1g_2\cdots g_{Mn})$ outside of the $K_1n$ neighborhood of $\xi'$ produces a long first return of $\xi'$ or $\xi$, and each such first return is produced by at most $2d$ points of the inverted orbit (we have to multiply by 2, since we have to consider both sides of $\xi'$).

It follows that the number $R_\xi+R_{\xi'}$ of long first returns satisfies
\[R_\xi+R_{\xi'}\ge \frac{1}{3d}\Bigl(|\mathcal{O}_{\xi}(g_1g_2\cdots g_{Mn})|-K_1n+|\mathcal{O}_{\xi'}(g_1g_2\cdots g_{Mn})|-2K_1n \Bigr).\]
Consequently, by~\eqref{eq:ORxi},
\begin{multline*}|\mathcal{O}_{\xi}(g_1g_2\cdots g_{Mn})|+|\mathcal{O}_{\xi'}(g_1g_2\cdots g_{Mn})|\le M(\nu_{\xi}(n)+\nu_{\xi'}(n))-(R_{\xi}+R_{\xi'})\le\\
M(\nu_{\xi}(n)+\nu_{\xi'}(n))-\frac{1}{3d} \Bigl(|\mathcal{O}_{\xi}(g_1g_2\cdots g_{Mn})|+|\mathcal{O}_{\xi'}(g_1g_2\cdots g_{Mn})|-3K_1n\Bigr),
\end{multline*}
hence
\[\frac{3d+1}{3d}\Bigl(|\mathcal{O}_{\xi}(g_1g_2\cdots g_{Mn})|+|\mathcal{O}_{\xi'}(g_1g_2\cdots g_{Mn})|\Bigr)\le M(\nu_\xi(n)+\nu_{\xi'}(n))+\frac{K_1}{d}n.\]
Denote $\nu(n)=\nu_\xi(n)+\nu_{\xi'}(n)$.
Since $g_1g_2\cdots g_{Mn}$ was arbitrary,
we have
\[\frac{3d+1}{3d}\nu(Mn)\le M\nu(n)+\frac{K_1}{d}n.\]

Let us denote $\delta(n)=\frac{\nu(n)}{n}$. Multiplying the last inequality by $\frac{3d}{(3d+1)Mn}$, we get
\[\delta(Mn)\le\frac{3d}{3d+1}\delta(n)+\frac{K_2}{M}\]
for $K_2=3K_1/(3d+1)$.

Let $M=\left\lceil\frac{2(3d+1)K_2}{\delta(n)}\right\rceil$. Then
\[\delta(Mn)\le\frac{3d}{3d+1}\delta(n)+\frac{1}{2(3d+1)}\delta(n)=\frac{6d+1}{6d+2}\delta(n)\]
Denote $\rho=\frac{6d+1}{6d+2}$. It is only important that $0<\rho<1$.

Fix $n_0$, and define inductively a sequence $n_k$ by the rule $n_{k+1}=\left\lceil K_3\delta(n_k)^{-1}\right\rceil n_k$ for $K_3=2(3d+1)K_2$.
Then $\delta(n_{k+1})\le\rho\delta(n_k)$ for every $k$. Taking a bigger $K_3$, if necessary, we may assume that $n_k$ is strictly increasing.

We may assume that $K_3>1$, then \[n_{k+1}=\left\lceil K_3\delta(n_k)^{-1}\right\rceil n_k\le K_4\delta(n_k)^{-1}\cdot n_k,\] for $K_4=K_3+1$, since $\delta(n)\le 1$ for all $n$. Then
\begin{multline*}
n_k\le n_0K_4^k\delta(n_0)^{-1}\delta(n_1)^{-1}\cdots\delta(n_{k-1})^{-1}\le\\
K_4^k\rho^k\delta(n_k)^{-1}\rho^{k-1}\delta(n_k)^{-1}\cdots \rho\delta(n_k)^{-1}=K_4^k\rho^{k(k+1)/2}\delta(n_k)^{-k}
\end{multline*}
rasing the inequality to the power $1/k$, we get
\[n_k^{1/k}\le K_4\rho^{\frac{k+1}{2}}\delta(n_k)^{-1},\]
hence
\[\delta(n_k)\le K_4\rho^{\frac{k+1}{2}}n_k^{-1/k}\le K_5\rho_1^k n_k^{-1/k},\]
for $\rho_1=\sqrt{\rho}$ and $K_5=K_4\rho_1$.

Take an arbitrary $n$. Let $k$ be such that $n_k\le n<n_{k+1}$.
We have
\[\delta(n_k)\le K_5\rho_1^k n_k^{-1/k}\le K_5n_k^{-1/k}\le K_5\left(K_4^{-1}\delta(n_k)n_{k+1}\right)^{-1/k}\le K_5K_4\left(\delta(n_k)n_{k+1}\right)^{-1/k},\]
hence
\[\delta(n_k)^{1+1/k}\le K_6n_{k+1}^{-1/k}<K_6n^{-1/k},\] for $K_6=K_4K_5$,
so
\[\delta(n_k)\le (K_6n^{-1/k})^{\frac{k}{k+1}}\le K_6n^{-1/(k+1)}.\]

Therefore, using Lemma~\ref{lem:doubling}, we get
\[\delta(n)\le 2\delta(n_k)\le 2K_6 n^{-1/(k+1)}\]
and
\[\delta(n)\le 2\delta(n_k)\le 2\delta(n_0)\rho^k.\]

Suppose that $k\le\sqrt{\log n}$. Then
\[\delta(n)\le 2K_6 n^{-1/(k+1)}\le 2K_6n^{-1/(\sqrt{\log n}+1)}=2K_6 e^{-\frac{\log n}{1+\sqrt{\log n}}}\le K_7e^{-K_8\sqrt{\log n}}\]
for some positive constants $K_7$ and $K_8$.

Suppose that $k>\sqrt{\log n}$. Then
\[\delta(n)\le 2\delta(n_0)\rho^k\le 2\delta(n_0)\rho^{\sqrt{\log n}}=2\delta(n_0)e^{\log\rho\sqrt{\log n}}.\]
Taking $C_1=\max\{K_7, 2\delta(n_0)\}$ and $C_2=\min\{K_8, -\log\rho\}$, we get
\[\frac{\nu_\xi(n)}{n}\le\delta(n)\le C_1e^{-C_2\sqrt{\log n}}\]
for all $n$.

Let now $\zeta_0\in\X$ be arbitrary.
Let $n\ge 1$ be a natural number, and let $k$ be such that $|Z_{k-1}|\le n<|Z_k|$.
Then for every $k$ the vertex $\zeta_0$ of $\Gamma_{\zeta_0}$ is contained in a segment $\Sigma$ isomorphic to a segment of the form $Z_k^{-1}IZ_k$, where $|I|\le K_1n$ for some fixed $K_1$. (Recall that $|Z_k|/|Z_{k-1}|$ is bounded.)

We may assume that distance from $\zeta_0$ to the copies of $Z_k$ and $Z_k^{-1}$ in $\Sigma$ is more than $n$. Consider a word $g=g_1g_2\cdots g_{Mn}$.  Split $g$ into subwords $h_1=g_1g_2\cdots g_n$, $h_2=g_{n+1}g_{n+2}\cdots g_{2n}$, \ldots $h_M=g_{(M-1)n+1}g_{(M-1)n+2}\cdots g_{Mn}$.

Suppose that $\zeta_1\in\mathcal{O}_{\zeta_0}(g_1g_2\cdots g_{Mn})\setminus\Sigma$. Then for some $t$ we have $\zeta_1=g_1g_2\cdots g_t(\zeta_0)$. Without loss of generality, let us assume that $\zeta_1$ is to the right of $\zeta_0$.

Represent $t=qn+r$ where $r\in\{0, 1, \ldots, n-1\}$. Then
\[\zeta_1=h_1h_2\cdots h_qg_{qn+1}\cdots g_{qn+r}(\zeta_0).\]
Denote $\zeta_0'=g_{qn+1}\cdots g_{qn+r}(\zeta_0)$. We have the sequence
\[(\zeta_0,\quad\zeta_0',\quad h_q(\zeta_0'),\quad h_{q-1}h_q(\zeta_0'),\quad\ldots, \quad h_1h_2\cdots h_q(\zeta_0')=\zeta_1)\]
such that the distance between consecutive terms in the sequence is not more than $n$. It follows that one of the elements of the sequence belongs to the right subinterval $Z_k$ of $\Sigma$, see Figure~\ref{fig:uniform}. Let $\zeta_2=h_{l}h_{l+1}\cdots h_q(\zeta_0')=g_{(l-1)n+1}g_{(l-1)n+2}\cdots g_t(\zeta_0)$ be the first such point. Then $\zeta_2'=h_{l+1}\cdots h_q(\zeta_0')$ is to the left of $Z_k$. Consider the path
\[(\zeta_2',\quad g_{ln}(\zeta_2'),\quad g_{ln-1}g_{ln}(\zeta_2'), \quad\ldots,\quad g_{(l+1)n+1}\cdots g_{ln}(\zeta_2')=\zeta_2)\]
It passes through the left end $\eta$ of the subinterval $Z_k$ of $\Sigma$. Let $g_sg_{s+1}\cdots g_{(l+1)n}(\zeta_2')$ be the last entry of the sequence equal to $\eta$. Note that $(l-1)n+1\le s\le ln$.

\begin{figure}
\centering
\includegraphics{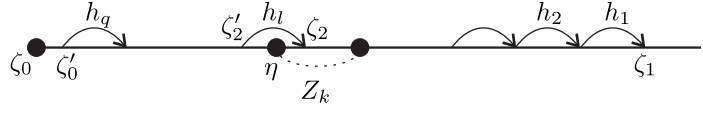}
\caption{A uniform estimate on $\nu(n)$}
\label{fig:uniform}
\end{figure}

Then the path
\[(\xi,\quad g_{s-1}(\xi),\quad \ldots,\quad g_{(l-1)n+1}g_{(l-1)n+2}\cdots g_{s-1}(\xi)),\]
stays inside $Z_k$. It follows that if we map the copy of $Z_k\subset\Sigma$ to the original place of $Z_k$ (the left end of $\Gamma_\xi$), then $\zeta_2$ will be moved to a point belonging to $\mathcal{O}_\xi(g_{(l-1)n+1}g_{(l-1)n+2}\cdots g_{ln})=\mathcal{O}_\xi(h_l)$. It follows that for every value of $l$ there are not more than $\nu_\xi(n)$ possible values of $\zeta_2$.

We have $\zeta_1=h_1h_2\cdots h_{l-1}(\zeta_2)$. It follows that for each $l$ there are not more than $\nu_\xi(n)$ possible values of $\zeta_1$. It follows that the total number of possible values of $\zeta_1$ is not more than $M\nu_\xi(n)$.

We proved that
\[\nu_{\zeta_0}(Mn)\le\left(K_1n+2|Z_k|\right)+2M\nu_\xi(n)\le K_2n+2M\nu_\xi(n),\]
for arbitrary $n$ and $M$, where $K_2>0$ is fixed. Take $n=M$. Then
\[\nu_{\zeta_0}(n^2)\le K_2n+2C_1n^2e^{-C_2\sqrt{\log n}}\le C_1'n^2e^{-C_2'\sqrt{\log n^2}}\]
for some $C_1', C_2'>0$.

For every $n\ge 1$ there exists $k$ such that $k^2/4\le n\le k^2$. Then
\[\nu_{\zeta_0}(n)\le\nu_{\zeta_0}(k^2)\le C_1'k^2e^{-C_2'\sqrt{\log(k^2)}}\le C_1''ne^{-C_2''\sqrt{\log n}}\]
for some positive constants $C_1''$ and $C_2''$, which finishes the proof.
\end{proof}

\subsection{Subexponential growth}

\begin{theorem}
\label{th:growth}
Let $G$ be a fragmentation of a minimal action of a dihedral group on a Cantor set. Suppose that there exists a purely non-Hausdorff singularity $\xi\in\X$, and that the orbital graphs of $G$ are linearly repetitive. Then the growth of $G$ is bounded above by $\exp\left(C_1n e^{-C_2\sqrt{\log n}}\right)$ for some positive constants $C_1$ and $C_2$.
\end{theorem}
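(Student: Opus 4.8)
The plan is to deduce the growth bound from the estimate on inverted orbits in Proposition~\ref{pr:uppernu}, using the fact (Proposition~\ref{prop:embeddingfull}, or rather its proof) that $G$ embeds into the topological full group of a minimal $\Z$-subshift $\mathcal{S}$, so that the action of $G$ on the one-dimensional orbital graph $\Gamma_\zeta$ of a regular point $\zeta$ is faithful. Fix such a $\zeta$ and identify the vertex set of $\Gamma_\zeta$ with $\Z$ (after choosing an orientation), with $\zeta$ at the origin. An element $g\in G$ of length $n$ in the generating set $S=A\cup B$ is then completely determined by the map it induces on $\Z$, and since every generator moves each vertex by at most $1$, this map moves each vertex by at most $n$; moreover it is ``piecewise a translation'' with pieces controlled by the subshift.

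The key observation is that $g=g_n\cdots g_1\in G$ is determined by (i) the inverted orbit $\mathcal{O}_\zeta(g_1g_2\cdots g_n)$ of the word, together with (ii) a bounded amount of extra local data. More precisely, I would argue: the word $g_1g_2\cdots g_n$ traces a path $\zeta, g_1(\zeta), g_1g_2(\zeta),\dots$ in $\Gamma_\zeta$ whose set of visited vertices is $\mathcal{O}_\zeta(g)\cup\{\zeta\}$; this set is an interval-like subset of $\Z$ of size at most $\nu_\zeta(n)+1$, contained in the ball of radius $n$ about $\zeta$. The crucial point is that on the complement of this visited interval $g$ acts as the identity, and on the visited interval $g$ acts as a homeomorphism that is a product of at most $n$ generators and hence locally a translation determined by a window of the subshift $\mathcal{S}$ of bounded size. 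Counting: the number of possible visited intervals of length $\le \nu_\zeta(n)$ inside $[-n,n]$ is at most $(2n+1)\cdot$ (number of possible lengths) $\le (2n+1)^2$; and for a fixed visited interval $I$, the element $g$ restricted to $I$ is an element of the topological full group supported on $I$, which — because the full group of a minimal $\Z$-action restricted to a clopen piece is again such a full group, and because $g$ has word length $\le n$ — can be specified by a bounded amount of data at each of the at most $|\mathcal{O}_\zeta(g)|$ points where it ``changes translation length''. Thus the number of such $g$ is at most $(2n+1)^2 \cdot C^{\nu_\zeta(n)}$ for a constant $C$ depending only on $|S|$ and the subshift.

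Feeding in Proposition~\ref{pr:uppernu}, which gives $\nu_\zeta(n)\le C_1 n e^{-C_2\sqrt{\log n}}$ uniformly in $\zeta$, we obtain
\[
\gamma_G(n)\le (2n+1)^2 C^{C_1 n e^{-C_2\sqrt{\log n}}} = \exp\!\left(C_1' n e^{-C_2\sqrt{\log n}} + 2\log(2n+1)\right),
\]
and since the polynomial term $2\log(2n+1)$ is absorbed into $C_1' n e^{-C_2\sqrt{\log n}}$ (after adjusting $C_2$ slightly downward), this is bounded by $\exp\!\left(C_1'' n e^{-C_2''\sqrt{\log n}}\right)$, as claimed. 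Combined with the obvious fact that $G$ is infinite (it is an infinite fragmentation, e.g.\ it surjects onto the dihedral group or already $A$ is nontrivial enough — more to the point, it acts minimally on the infinite Cantor set, so it is infinite, and an infinite finitely generated group has at least linear growth), this shows $G$ has intermediate growth.

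The main obstacle I anticipate is making precise step (ii): showing that an element $g\in G$ of word length $n$, once its visited interval $\mathcal{O}_\zeta(g)$ is fixed, is specified by at most $\mathrm{const}\cdot|\mathcal{O}_\zeta(g)|$ bits of information. The cheap way is to note that $g$ is determined by its values $g(\eta)$ for $\eta$ ranging over the visited interval together with a bounded neighborhood of its endpoints, and each such value lies within distance $n$ of $\eta$ — but that only gives $n^{\nu_\zeta(n)}$, whose logarithm is $\nu_\zeta(n)\log n$, which is \emph{not} $o(n)$ in general and would wreck the estimate. So one genuinely needs the sharper count: $g$, as a homeomorphism locally equal to generators, is ``piecewise translation'' and the number of break-points of this piecewise structure inside the visited interval is bounded (by $|\mathcal{O}_\zeta(g)|$ up to a constant, since each step of the word changes the local translation only near its current position), and between break-points the translation length is bounded by $1$ in absolute value after one more local reduction — here is where the detailed combinatorics of how generators of a fragmented dihedral group act on the chain (Subsection~\ref{ss:orbitalgraphs}) must be invoked, exactly as in the proof of Proposition~\ref{prop:embeddingfull}, to see that $g$ lies in $\full(\langle\sigma\rangle,\mathcal{S})$ and such full-group elements supported on an interval of size $\ell$ number at most $C^\ell$. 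Getting this counting lemma clean, uniformly over all regular $\zeta$ (hence, by density, over all of $\X$), is the technical heart of the argument; everything else is bookkeeping and the already-established Proposition~\ref{pr:uppernu}.
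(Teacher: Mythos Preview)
Your approach has a genuine gap at the ``crucial point'': the claim that \emph{on the complement of the visited interval $g$ acts as the identity} is false. The inverted orbit $\mathcal{O}_\zeta(g_1g_2\cdots g_n)$ only records the trajectory of the single basepoint $\zeta$ under the prefixes of the word; it says nothing about where $g$ sends other vertices of $\Gamma_\zeta$. For a concrete counterexample, take $g=(ab)^k$ written as a word of length $2k$: its inverted orbit from $\zeta$ is an interval of length about $2k$, but as a group element $g$ shifts \emph{every} vertex of $\Gamma_\zeta$ by $k$ and is certainly not supported on any finite interval. Likewise, $g$ is not ``an element of the full group supported on $I$'', so the count $C^{|I|}$ for such elements is irrelevant. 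Your later attempt to control ``break-points of the piecewise-translation structure'' by $|\mathcal{O}_\zeta(g)|$ has the same flaw: the break-points of $g$ as an element of $\full(\langle\sigma\rangle,\mathcal{S})$ are scattered throughout $\Gamma_\zeta$ and have no a priori relation to the trajectory of the single point $\zeta$.

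The paper's argument supplies exactly the missing idea. One does not fix a single $\zeta$; instead one records, for \emph{every} regular $\zeta\in\X$, the triple consisting of the labeled footprint $W_\zeta'$ (the range of the path $\zeta, g_n(\zeta), g_{n-1}g_n(\zeta),\ldots,g(\zeta)$ together with its two boundary edges), the position of $\zeta$ in it, and the position of $g(\zeta)$ in it. The collection of all such triples determines $g$. To bound how many distinct triples occur, one uses linear repetitivity to embed each $W_\zeta'$ (of length $l_\zeta\le n$) into a fixed orbital graph $\Gamma_{\xi_0}$ at distance $\le K l_\zeta$ from $\xi_0$, and then places a geometric sequence of reference points $\zeta_k$ at distances $\lfloor\alpha^k\rfloor$ from $\xi_0$ with $1<\alpha<(K+1)/K$. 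This spacing guarantees that the embedded copy of each $W_\zeta$ contains some $\zeta_k$, whence the image of $\zeta$ lies in $\mathcal{O}_{\zeta_k}(g_n\cdots g_1)$. Summing over the $O(\log n)$ relevant values of $k$ and applying Proposition~\ref{pr:uppernu} bounds the total number of distinct triples by $C_1 n(\log n)e^{-C_2\sqrt{\log n}}$; since each triple has at most $Cn^4$ possible values, the number of possible lists---hence the number of $g$---is at most $(Cn^4)^{C_1 n(\log n)e^{-C_2\sqrt{\log n}}}$, which after absorbing the polylogarithmic factors gives the stated bound.
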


\begin{proof}
Choose some point $\xi_0\in\X$. Denote by $I_k$ the segment of $\Gamma_{\xi_0}$ of length $k$ such that $\xi_0$ is its left end. Let $K$ be an arbitrary number larger than the coefficient in the definition of linear repetitiveness.

Choose $\alpha$ such that $1<\alpha<\frac{K+1}{K}$. Define, for $k\ge 1$, $\Sigma_k=I_{\left\lfloor\alpha^k\right\rfloor}$, and denote by $\zeta_k$ the right end of $\Sigma_k$. Then
\[\frac{\alpha^{k+1}-1}{\alpha^k}=\alpha-\alpha^{-k}<
\frac{|\Sigma_{k+1}|}{|\Sigma_k|}<
\frac{\alpha^{k+1}}{\alpha^k-1}=\frac{\alpha}{1-\alpha^{-k}}.\]

Let $g=g_1g_2\cdots g_n\in S^*$ be a word of length $n$ in generators $S=A\cup B$. For an arbitrary regular point $\zeta\in\X$ consider the set $W_\zeta=\{\zeta, g_n(\zeta), g_{n-1}g_n(\zeta), \ldots, g_1g_2\cdots g_n(\zeta)\}$. It is a segment of $\Gamma_\zeta$, since it is the range of a path in $\Gamma_\zeta$. Denote by $l_\zeta$ its length. Let $W_\zeta'$ be the subsegment of $\Gamma_\zeta$ consisting of $W_\zeta$ and the two adjacent edges.
There exists an isomorphic copy $\phi_\zeta(W_\zeta')$ of $W_\zeta'$ in the right half of $\Gamma_{\xi_0}$ such that its left end is on distance at most $Kl_\zeta$ from $\xi_0$ (if $K$ is big enough).

Let $k$ be the smallest positive integer such that $\lfloor\alpha^k\rfloor$ is larger than the distance from $\xi_0$ to the left end of $\phi_\zeta(W_\zeta)$.  Then $\zeta_k$ is to the right of the left end of $\phi_\zeta(W_\zeta)$.
Suppose that $\zeta_k$ is to the right of the right end of $\phi_\zeta(W_\zeta)$. Let $m$ be the distance from $\xi_0$ to the left end of $\phi_\zeta(W_\zeta)$. Then $\lfloor\alpha^k\rfloor\ge m+l$, $\lfloor\alpha^{k-1}\rfloor\le m$, and $m\le Kl$. It follows that
\[\frac{\alpha}{1-\alpha^{-k+1}}>\frac{\lfloor\alpha^k\rfloor}{\lfloor\alpha^{k-1}\rfloor}
\ge\frac{m+l}{m}=1+\frac lm\ge 1+\frac 1K=\frac{K+1}K,\]
which is a contradiction for all $k$ bigger than some fixed $k_0$.

It follows that if $k$ is big enough, then the right end $\zeta_k$ of $\Sigma_k$ belongs to $\phi_\zeta(W_\zeta)$. It follows that there exists $s$ such that $g_s\cdots g_n(\phi_\zeta(\zeta))=\zeta_k$, or
\[\phi_\zeta(\zeta)=g_ng_{n-1}\cdots g_s(\zeta_k).\] It follows that $\phi_\zeta(\zeta)\in\mathcal{O}_{\zeta_k}(g_ng_{n-1}\cdots g_1)$.

Consider the set of all triples $(\phi_\zeta(\zeta), \phi_\zeta(W_\zeta'), \phi_\zeta(g(\zeta)))$ for all regular $\zeta\in\X$. If we know all such triples, then we know $g$, since for every $\zeta\in\X$ an isomorphic copy $\phi_\zeta(W_\zeta')$ of $W_\zeta'$ will appear in the list, and then the pair $\phi_\zeta(\zeta), \phi_\zeta(g(\zeta))$ will determine $g(\zeta)$.

Note that since the length of $W_\zeta$ is not more than $n$, the left end of $\phi(W_\zeta)$ is on distance at most $Kn$ from $\xi_0$, hence the values of $k$ are bounded by $\log(Kn)/\log\alpha\asymp\log n$.

It follows that the number of triples in the list (the length of the list) is not more than $C_1n\log ne^{-C_2\sqrt{\log n}}$ for some positive constants $C_1$ and $C_2$, by Proposition~\ref{pr:uppernu}.

Let us estimate now the number of possible such lists for all words $g\in S^*$ of length $n$. Each entry in the list consists of a point of $I_{(K+1)n}$, a segment of length at most $n$ containing this point, and a point in this segment. It follows that the number of possibilities for each entry of the list is bounded above by $Cn^4$ for some constant $C$. Consequently, the number of possible lists is less than
\begin{multline*}(Cn^4)^{C_1n\log ne^{-C_2\sqrt{\log n}}}=\\
\exp\left(C_1n\log ne^{-C_2\sqrt{\log n}}(4\log n+\log C)\right)\le\\ \exp\left(C_1'n(\log n)^2e^{-C_2\sqrt{\log n}}\right)=\exp\left(C_1'n(e^{2\log \log n-C_2\sqrt{\log n}})\right)\\
\le\exp\left(C_1'ne^{-C_2'\sqrt{\log n}}\right)
\end{multline*}
for some $C_1', C_2'>0$ and all $n$ big enough, since $\frac{\log\log n}{\sqrt{\log n}}\to 0$ as $n\to\infty$. Since the element $g$ is uniquely determined by the corresponding list, this gives the necessary subexponential estimate of the growth of $G$. The group $G$ can not be of polynomial growth, since it is finitely generated, infinite, and periodic, which excludes the possibility of a polynomial growth, by M.~Gromov's Theorem~\cite{gro:gr}.
\end{proof}

\section{Examples}
\subsection{Substitutional systems}
Let $\alb$ be a finite alphabet, and let $\tau:\alb^*\arr\alb^*$ be an endomorphism of the free monoid $\alb^*$. It is uniquely determined by the restriction $\tau:\alb\arr\alb^*$, which is usually called a \emph{substitution}. The associated subshift $\X_\tau\subset\alb^{\Z}$ is the set of all bi-infinite sequences $w$ such that for every finite subword $v$ of $w$ there exists $n\ge 0$ and $x\in\alb$ such that $v$ is a subword of $\tau^n(x)$. It is non-empty if and only if there exists $x\in\alb$ such that the length of $\tau^n(x)$ goes to infinity as $n\to\infty$.

D.~Damanik and D.~Lenz in~\cite{damaniklenz} proved that a substitutional shift is linearly repetitive if and only if it is minimal and gave a criterion of minimality in terms of the substitution.

Let us illustrate how substitutional dynamical systems can be used to construct periodic simple groups of intermediate growth on the example of the Thue-Morse substitution.

Consider the action of the dihedral group from Example~\ref{ex:ThueMorse}. It acts on the shift $\mathcal{S}$ generated by the substitution \[\tau(\zero)=\zero\one,\quad\tau(\one)=\one\zero.\]

Let us fragment the substitution $\tau^2$ by introducing new symbols $t, B, C, D$ and modifying the substitution:
\begin{gather*}
\tau'(\zero)=\zero t\one D\one t\zero ,\quad
\tau'(\one)=\one t\zero D\zero t\one,\\
\tau'(D)=C,\quad
\tau'(C)=B,\quad
\tau'(B)=D,\quad \tau'(t)=t.
\end{gather*} Let $\mathcal{S}'$ be the set of sequences in the shift generated by $\tau'$ that have the letters $\one, \two$ on the even positions, and letters $B, C, D, t$ on the odd positions. We have a natural map $\kappa:\mathcal{S}'\arr\mathcal{S}$ erasing the letters $B, C, D$. One can show that for every $w\in\mathcal{S}$ the set $\kappa^{-1}(w)$ consists of a single element, except for $w$ equal to a shift of one of the two infinite palindromes
\[\ldots \one\zero\zero\one\zero\one\one\zero\;.\;\zero\one\one\zero\one\zero\zero\one\ldots\]
and
\[\ldots\zero\one\one\zero\one\zero\zero\one\;.\;\one\zero\zero\one\zero\one\one\zero\ldots,\]
when $\kappa^{-1}(w)$ has three elements that differ from each other only by the central letter $B, C$, or $D$.

Let $a$ be the transformation of $\mathcal{S}'$ flipping a sequence around the letter on the zeroth position, and let $b$, $c$, $d$, and $t$, respectively, be the transformations flipping a sequence around the letter on the first position if it is $C$ or $D$, $B$ or $D$, $B$ or $C$, and $t$, respectively.
Then the action of $a, b, c, d, t$ on $\mathcal{S}'$ lifts by $\kappa$ to an action on $\mathcal{S}$ in a unique way. The sequences from $\mathcal{S}'$ are naturally interpreted as the orbital graphs of regular points, and the limits $\Lambda_i$ of regular orbital graphs in the case of a singular point of the action of $G=\langle a, b, c, d, t\rangle$ on $\mathcal{S}$. The group $\alt(G, \mathcal{S})$ is a finitely generated simple periodic group of intermediate growth.

\subsection{Groups of polygon rearrangements}

A nice class of examples illustrating Theorem~\ref{th:main} was suggested to the author by Yves de Cornulier. Consider the torus $\R^2/\Z^2$ and two central symmetries $a:x\mapsto -x+v$ and $b:x\mapsto -x$ for some $v\in\R^2/\Z^2$. Suppose that $v$ is represented by $(x, y)\in\R^2$, such that $1, x, y$ are linearly independent over $\Q$. Then, by the classical Kronecker's theorem, the action of $\Z$ generated by the composition $x\mapsto x+v$ of the two symmetries acts minimally on the torus.

Let us split the torus into three $b$-invariant parts $P_1, P_2, P_3$ (e.g., each equal to a union of some polygons) such that the fixed point $0$ of $b$ belongs to the boundary of each of the parts. Consider then the transformations $b_1, b_2, b_3$ (defined up to a set of measure zero) of the torus acting trivially on $P_1$, $P_2$, $P_3$, respectively, and acting as $b$ on their complements. We may also cut the torus open, and represent it as a polygon, so that then $a$, $b_1$, $b_2$, $b_3$ act on the polygon by piecewise isometries. We can lift this action to an action by homeomorphisms of the Cantor set satisfying the conditions of Theorem~\ref{th:main}: one has to double all points lying on the sides of the polygons (except for $0$, which has to remain common to all three parts $P_i$), and then propagate this doubling by the action of the group.

For example, we can consider the group generated by piecewise isometries of the regular hexagon, shown on Figure~\ref{fig:hexagon}. The first transformation $a$ rotates each of the four shown polygons by 180 degrees. The remaining three transformation rotate the shaded areas by 180 degrees around the center of the hexagon and fix the white areas. Theorem~\ref{th:main} implies that this group is periodic, provided the first generator is sufficiently generic (i.e., such that its composition with the 180 degree rotation around the center of the hexagon acts minimally on the torus).

\begin{figure}
\centering
\includegraphics[width=4in]{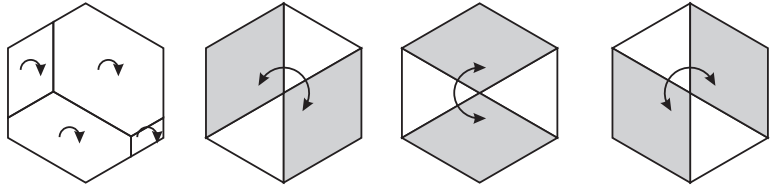}
\caption{A periodic group acting on a hexagon}
\label{fig:hexagon}
\end{figure}

\section{Fragmenting the golden mean dihedral group}
\label{s:fibonacci}

\subsection{The construction}
Let us describe one explicit example of a finitely generated simple periodic group of intermediate growth.

Denote by $\varphi$ the golden mean $\frac{1+\sqrt{5}}2$.
Let $T_\one$ and $T_\two$ be the transformations
\[T_\one(x)=\varphi^{-1} x,\qquad T_\two(x)=1-\varphi^{-2}x\]
of $[0, 1]$. The ranges of $T_\one$ and $T_\two$ are the intervals $[0, \varphi-1]$ and $[\varphi-1, 1]$, respectively. They do not overlap and cover the circle $\R/\Z$.

For every infinite sequence $w=x_1x_2\ldots\in\xo$ over the alphabet $\alb=\{\one, \two\}$, the intersection of the ranges of $T_{x_1}\circ T_{x_2}\circ\cdots\circ T_{x_n}$ is a single point $\xi_w\in \R/\Z$.

Denote by $a$ and $b$ the transformations of the circle $\R/\Z$ given by
\[a(x)=\varphi-x,\qquad b(x)=1-x.\]
Then $ba$ is the rotation $x\mapsto x+\varphi$ of the circle. We get a minimal action of the dihedral group $\langle a, b\rangle$ on the circle.

Direct computations show that
\begin{eqnarray*}
a\circ T_\one(x) &=& T_\one\circ b(x),\\
a\circ T_\two(x) &=& T_\two\circ b(x),
\end{eqnarray*}
and
\begin{eqnarray*}
b\circ T_\one\circ T_\one(x) &=& T_\two(x),\\
b\circ T_\two(x) &=& T_\one\circ T_\one(x),\\
b\circ T_\one\circ T_\two(x) &=& T_\one\circ T_\two\circ b(x),
\end{eqnarray*}
for all $x\in [0, 1]$, where $a$ acts on $[0, \varphi-1]$ by $x\mapsto\varphi-x-1$ and on $[\varphi-1, 1]$ by $x\mapsto\varphi-x$.

We get the following associated action on the sequences $x_1x_2\ldots\in\xo$:
\begin{gather*}
a(\one w)=\one b(w),\qquad a(\two w)=\two b(w),\\
b(\one\one w)=\two w,\quad b(\two w)=\one\one w,\quad b(\one\two w)=\one\two b(w).
\end{gather*}

More precisely, we have a natural map $\kappa:\xo\arr\R/\Z$ mapping a sequence $x_1x_2\ldots$ to the unique intersection point of the ranges of $T_{x_1}\circ T_{x_2}\circ\cdots\circ T_{x_n}$. Then the map $\kappa$ is a semiconjugacy of the transformations $a, b$ acting on $\xo$ with the transformations $a, b$ acting on $\R/\Z$.

As usual, we will identify $\xo$ with the boundary of the rooted tree $\xs$. However, it is more natural to change the metric on the tree in the following way.
The \emph{weight} of the letter $\one$ is
equal to $1$, the weight of the letter $\two$ is equal to $2$. The weight
of a word $v\in\xs$ is equal to the sum of the weights of its letters.

Denote by $L_n$ the set of words of weight $n$. We denote by $L_nv$
for $v\in\xs$ the set of words of the form $uv$ for $u\in
L_n$. Similarly, if $A$ is a subset of $\xs$, then we denote by $A\xo$
the set of all sequences $w\in\xo$ such that a beginning of $w$
belongs to $A$.

We obviously have
\begin{equation}
\label{eq:Ln}
L_n=L_{n-1}\one\sqcup L_{n-2}\two,
\end{equation}
and $L_0=\{\varnothing\}$, $L_1=\{\one\}$, so that $|L_n|$ for $n=0, 1, 2, \ldots$, is the Fibonacci sequence $1, 1, 2, 3, 5, \ldots$.

The transformations $b$ has one fixed point $(\one\two)^\omega$ (encoding the point $1/2$; the point $0$ has two encodings, which are interchanged by $b$). The transformation $a$ has two fixed points $\one(\one\two)^\omega$ and $\two(\one\two)^\omega$ (encoding the points $\varphi/2$ and $(\varphi+1)/2$ of the circle). Let $W_n$, for $n\ge 0$, be the set of sequences starting by $(\one\two)^n\two$ or by $(\one\two)^n\one\one$. Define $P_i=\bigcup_{k=0}^\infty W_{3k+i}$, for $i=0, 1, 2$. The sets $P_i$ form an open partition of $\xo\setminus\{(\one\two)^\omega\}$.

Define, similarly to the Grigorchuk group, homeomorphisms $b_0, c_0, d_0$ of $\xo$ acting trivially on $P_2, P_1, P_0$, respectively, and as $b$ on their complements. Let $a_0$ be the homeomorphism interchanging $\one\one\xo$ with $\two\xo$. More explicitly, the homeomorphisms $a_0, b_0, c_0, d_0$ are given by
\begin{alignat*}{3}
a_0(\one\one w)&=\two w, &\quad a_0(\two w)&=\one\one w, &\quad a_0(\one\two w)&=\one\two w,\\
b_0(\one\one w)&=\two w, &\quad b_0(\two w)&=\one\one w,  &\quad b_0(\one\two w)&=\one\two c_0(w),\\
c_0(\one\one w)&=\two w, &\quad c_0(\two w)&=\one\one w, &\quad c_0(\one \two w)&=\one\two d_0(w),\\
d_0(\one\one w)&=\one\one w, &\quad d_0(\two w)&=\two w, &\quad
d_0(\one \two w)&=\one \two b_0(w).
\end{alignat*}
Note that $a_0$ belongs to the full topological group of $\langle b_0\rangle$.

Let us also fragment the homeomorphism $a$ around its fixed points $\one(\one\two)^\omega$ and $\two(\one\two)^\omega$, in the same way as we fragmented the transformation $b$ around $(\one\two)^\omega$. Namely, define, for every letter $x\in\{a, b, c, d\}$:
\begin{alignat*}{2}
x_1(\one w)&=\one x_0(w), &\quad x_1(\two w)&=\two w, \\
x_2(\one w)&=\one w, &\quad x_2(\two w)&=\two g_0(w).
\end{alignat*}
See Figure~\ref{fig:bcd} for a description of the action of the generators on the boundary of the tree $\xs$.

\begin{figure}
\centering
\includegraphics{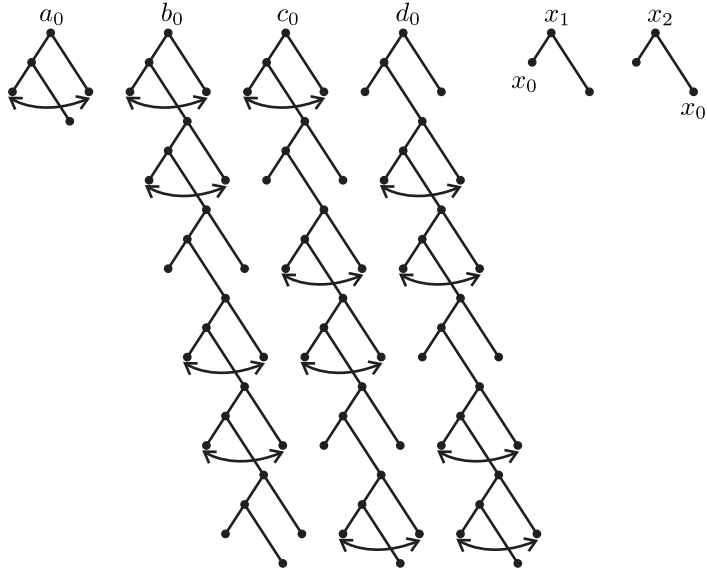}
\caption{The generators of $F$}
\label{fig:bcd}
\end{figure}

The homeomorphisms $b_i, c_i, d_i$ are examples of homeomorphisms defined by \emph{finite asynchronous automata}, see~\cite{grinek_en,grineksu_en}.

Let $F$ be the group generated by $a_i, b_i, c_i, d_i$, $i=0, 1, 2$.
The goal of this section is to prove the following.

\begin{theorem}
\label{th:F}
The group $F$ coincides with its topological full group $\full(F, \xo)$. It is periodic and of intermediate growth. Its derived subgroup $[F, F]$ is simple and has finite index in $F$.
\end{theorem}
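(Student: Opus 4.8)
The plan is to derive periodicity and intermediate growth from Theorems~\ref{th:main} and~\ref{th:growth}, to establish $F=\full(F,\xo)$ by a direct combinatorial analysis of the generators, and then to read off the properties of $[F,F]$ from $F=\full(F,\xo)$ together with Theorem~\ref{th:fullgrnek}.

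First I would verify that $F$ is a fragmentation of a minimal dihedral action possessing a purely non-Hausdorff singularity. The group $\langle a,b\rangle$ acts minimally on $\R/\Z$ by Kronecker's theorem (since $1$ and $\varphi$ are rationally independent), hence it acts minimally on $\xo$ through the finite-to-one semiconjugacy $\kappa$. The eight homeomorphisms $x_1,x_2$ with $x\in\{a,b,c,d\}$ form a fragmentation of $a$ and the four homeomorphisms $a_0,b_0,c_0,d_0$ form a fragmentation of $b$: each acts at every point as $a$, resp.\ $b$, or trivially, and collectively they realize $a$, resp.\ $b$, everywhere. The point $(\one\two)^\omega$ is then a purely non-Hausdorff singularity: by Lemma~\ref{lem:germsofxi} its group of germs is the Klein four-group generated by the germs of the Grigorchuk-type triple $b_0,c_0,d_0$ (cf.\ Example~\ref{ex:Grigorchuk}), and $b_0,c_0,d_0$ act trivially on the pieces $P_2,P_1,P_0$ respectively, each of which accumulates on $(\one\two)^\omega$, so the interior of the fixed-point set of every element of the germ group accumulates on $(\one\two)^\omega$. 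Theorem~\ref{th:main} then gives that $F$ is periodic. For the growth statement, the orbital graphs of $F$ are decorations (in the sense of~\ref{ss:orbitalgraphs}) of the Schreier graphs of the rotation $\langle ab\rangle$ of the circle by $\varphi$, which form a linearly repetitive system (a Sturmian system with bounded partial quotients, equivalently the Fibonacci substitutional shift), and the decoration is locally determined, so the orbital graphs of $F$ are linearly repetitive. Theorem~\ref{th:growth} then bounds the growth of $F$ above by $\exp\bigl(C_1n\,e^{-C_2\sqrt{\log n}}\bigr)$; and since $F$ is finitely generated, infinite (it has infinite orbits) and periodic, Gromov's theorem rules out polynomial growth, so the growth is strictly intermediate.

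Next I would prove $F=\full(F,\xo)$; this is the step I expect to be the main obstacle, and everything else reduces to it. The inclusion $F\subseteq\full(F,\xo)$ is trivial. For the converse I would first reduce to a set of ``building block'' elements: writing any element of $\full(F,\xo)$ over a sufficiently fine clopen partition and decomposing the resulting rearrangement, it suffices to show that $F$ contains the elementary transpositions $\theta_{C,s}$ (interchanging a cylinder $C$ with $s(C)$ via a single generator $s$, when $C\cap s(C)=\emptyset$) together with the cyclic rearrangements built from them — the one-dimensional structure of the orbits and the periodicity of $F$ ensuring that no ``rotation-type'' generator beyond these is needed. I would then verify $\theta_{C,s}\in F$ by induction on the weight of $C$: the transpositions of small weight are produced from $a_0$, which is exactly $\theta_{\one\one\xo,b}$, from the triple $b_0,c_0,d_0$, and from the remaining generators, while the recursive rules $x_1(\one w)=\one x_0(w)$, $x_2(\two w)=\two x_0(w)$, $b_0(\one\two w)=\one\two c_0(w)$ and its cyclic variants let one realize a transposition of a cylinder of weight $k+1$ from transpositions of weight $k$ inside the appropriate subtree, using at each step that the triple $b_0,c_0,d_0$ provides, among the generators, one acting as $b$ on any prescribed shallow cylinder. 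This is the Grigorchuk-type bookkeeping carried out on the weighted (Fibonacci) tree $\xs$.

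Finally, for the statements about $[F,F]$: since all twelve generators $a_i,b_i,c_i,d_i$ of $F$ are involutions, $F/[F,F]$ is a quotient of $(\Z/2\Z)^{12}$, hence finite, so $[F,F]$ has finite index in $F$. For simplicity I would show $[F,F]=\alt(F,\xo)$. As the dihedral action is minimal, Theorem~\ref{th:fullgrnek} gives that $\alt(F,\xo)$ is simple and is contained in every non-trivial normal subgroup of $\full(F,\xo)=F$; since $F$ is non-abelian (being infinite and periodic), $[F,F]\ne\{\unit\}$, whence $\alt(F,\xo)\subseteq[F,F]$. For the reverse inclusion I would use the general fact that, for any minimal action, $[\full(G,\X),\full(G,\X)]\subseteq\alt(G,\X)$: every element of $\full(G,\X)$ is a product of transpositions $\theta_{C,g}$, the commutator of two transpositions is an even permutation of finitely many pairwise disjoint cylinders connected by explicit group elements, hence decomposes into $3$-cycles of the form $h_\alpha$, $\alpha\in\alt_3$, and therefore lies in $\alt(G,\X)$; since $\alt(G,\X)$ is normal in $\full(G,\X)$, the normal closure of these commutators is contained in it. Applying this with $G=F$ and using $F=\full(F,\xo)$ gives $[F,F]\subseteq\alt(F,\xo)$, so $[F,F]=\alt(F,\xo)$ is simple. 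In particular $\alt(F,\xo)$, being of finite index in the finitely generated group $F$, is itself finitely generated, which recovers the relevant conclusion of Theorem~\ref{th:fullgrnek} for this example by a direct route.
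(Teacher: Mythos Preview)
Your treatment of periodicity and intermediate growth is correct and matches the paper: Theorem~\ref{th:main} applies because $(\one\two)^\omega$ is a purely non-Hausdorff singularity (the germ group there is $\{\unit,b_0,c_0,d_0\}$, and each of $b_0,c_0,d_0$ is trivial on one of the pieces $P_0,P_1,P_2$, all of which accumulate on the fixed point), and Theorem~\ref{th:growth} applies via linear repetitivity of the Fibonacci substitutional system (the paper invokes the Damanik--Lenz criterion). The finite-index statement follows as you say from the generators being involutions.

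The gap is in your argument for $F=\full(F,\xo)$. You propose to get the elementary transpositions $\theta_{C,s}$ by induction on the weight of $C$, citing the recursions $x_1(\one w)=\one x_0(w)$, $b_0(\one\two w)=\one\two c_0(w)$, etc. But these recursions do not conjugate a transposition into a subtree: applying $b_0$ on $\one\two\xo$ produces $c_0$, not a transposition, so there is no mechanism in your sketch for passing a level-$k$ transposition to level $k+1$. The paper's route is quite different. Using Lemma~\ref{lem:deepaction} one writes a suitable generator as $y_{i'}=h\,a_{n-2}\,d_{n+1}$ with $h\in\symm(L_{n-3})\oplus\symm(L_{n-4}\two)$; conjugating $\alt(L_{n-2}\two)$ by $y_{i'}$ moves one vertex across the decomposition $L_n=L_{n-1}\one\sqcup L_{n-2}\two$, and the resulting group together with (the diagonal copy of) $\alt(L_{n-1})$ generates $\alt(L_n)$. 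This inductively gives $\alt_\omega<F$; then Proposition~\ref{pr:symabel} yields $\symm_\omega<F$, conjugation by $\symm(L_n)$ gives all $x_v$, and Corollary~\ref{cor:tables} finishes.

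Your argument for $[F,F]\subseteq\alt(F,\xo)$ also needs repair. The claim that every element of $\full(G,\X)$ is a product of transpositions $\theta_{C,g}$ is false in general (the shift in a minimal $\Z$-subshift is not); it survives here only because $F$ is periodic, which you should invoke. More seriously, a commutator of two such transpositions, after refining the partition, is an even permutation of clopen pieces, but by definition elements of $\alt(G,\X)$ permute pieces that are $G$-translates of a \emph{single} clopen set. In your refinement the pieces arising from $L_n$ and from $L_{n-1}\two$ have different weights and are not $F$-translates of one another, so you still owe an argument that such $3$-cycles lie in $\alt(F,\xo)$. The paper bypasses this by checking directly that the generators commute modulo $\alt(F,\xo)$: from $b_n=a_na_{n+3}b_{n+9}$ and its analogues one gets $x_n\equiv x_{n+9}\pmod{\alt_\omega}$ for each letter $x$, and then for large enough weight any $x_{v_1}$ can be conjugated by an element of $\alt(L_n)$ to $x_{u_1}$ with $u_1$ incomparable to the support of $y_{v_2}$, whence $[x_{v_1},y_{v_2}]\in\alt(F,\xo)$.
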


Periodicity follows from Theorem~\ref{th:main}.
Note that the set $\{b_0, c_0, d_0, \unit\}$ is a group isomorphic to $(\Z/2\Z)^2$. The element $a_0$ commutes with this subgroup, hence the elements $a_0, b_0, c_0, d_0$ generate a group isomorphic to $(\Z/2\Z)^3$.
The elements $a_1, b_1, c_1, d_1, a_2, b_2, c_2, d_2$ pairwise commute and generate a subgroup isomorphic to $(\Z/2\Z)^6$.
In particular, $F$ is a quotient of the free product $(\Z/2\Z)^3*(\Z/2\Z)^6$. It follows that $F/[F, F]$ is a quotient of $(\Z/2\Z)^9$, hence is finite. In fact, one can show that the abelianization epimorphism $F\arr F/[F, F]$ is induced by the epimorphism $(\Z/2\Z)^3*(\Z/2\Z)^6\arr(\Z/2\Z)^9$, so that $F/[F, F]\cong (\Z/2\Z)^9$, but we do not need it here.

Consequently, it is enough to prove subexponential growth, the equality $F=\full(F, \xo)$, and that $[F, F]=\alt(F, \xo)$.

\subsection{Locally finite groups $\symm_\omega$ and $\alt_\omega$}

\begin{lemma}
\label{lem:splitting}
For every $n\ge 1$ we have
\[\xo=L_n\xo\sqcup L_{n-1}\two\xo.\]
\end{lemma}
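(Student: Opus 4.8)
The plan is to read an infinite sequence $w=x_1x_2\ldots\in\xo$ letter by letter and keep track of the weights $s_j$ of its prefixes $x_1\cdots x_j$, where $s_0=0$ and $s_j-s_{j-1}$ is the weight of $x_j$, so it equals $1$ when $x_j=\one$ and $2$ when $x_j=\two$. Thus $(s_j)_j$ is strictly increasing, tends to infinity, and all its jumps have size $1$ or $2$. The elementary principle I will use throughout is that if one prefix of $w$ is a prefix of another, then its weight is not larger, with equality only if the two prefixes coincide; in particular $w$ has at most one prefix of any prescribed weight. (One could alternatively argue by induction on $n$ directly from \eqref{eq:Ln}: split $L_n\xo$ as $L_{n-1}\one\xo\sqcup L_{n-2}\two\xo$, use $L_{n-1}\one\xo\sqcup L_{n-1}\two\xo=L_{n-1}\xo$ to recombine, and apply the inductive hypothesis; the base case $n=1$ is just $\xo=\one\xo\sqcup\two\xo$. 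I will give the direct argument below because it also makes disjointness transparent.)

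For the covering $\xo=L_n\xo\cup L_{n-1}\two\xo$, fix $n\ge 1$ and take the least index $j$ with $s_j\ge n$, which exists since $s_j\to\infty$. Then $s_{j-1}\le n-1$, and as $s_j-s_{j-1}\le 2$ we get either $s_j=n$, or $s_j=n+1$ and $s_{j-1}=n-1$. In the first case $x_1\cdots x_j\in L_n$, hence $w\in L_n\xo$. In the second case the last jump has size $2$, so $x_j=\two$ and $x_1\cdots x_{j-1}\in L_{n-1}$, hence $x_1\cdots x_j\in L_{n-1}\two$ and $w\in L_{n-1}\two\xo$.

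The remaining point, and the only one needing any care, is disjointness. Suppose $w$ had both a prefix $u\in L_n$ and a prefix $u'\two$ with $u'\in L_{n-1}$. One of $u$ and $u'\two$ is a prefix of the other; since their weights are $n$ and $n+1$, the uniqueness principle forces $u$ to be a proper prefix of $u'\two$, and then, since $u'$ has weight $n-1<n$, it follows that $u'$ is a proper prefix of $u$. Consider the letter of $w$ in the position right after the last letter of $u'$: on one hand it is $\two$, because $u'\two$ is a prefix of $w$; on the other hand, since $u'$ is a proper prefix of $u$, this letter lies inside $u$, so the prefix of $u$ ending with it equals $u'\two$ and has weight $(n-1)+2=n+1$, exceeding the weight $n$ of $u$ — impossible. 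Hence no $w$ lies in both sets, the union in the statement is disjoint, and the proof is complete.
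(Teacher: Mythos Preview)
Your proof is correct. The paper proves this lemma by exactly the induction you sketch in parentheses: from the inductive hypothesis $\xo=L_n\xo\sqcup L_{n-1}\two\xo$ it splits $L_n\xo=L_n\one\xo\sqcup L_n\two\xo$ and regroups using $L_{n+1}=L_n\one\sqcup L_{n-1}\two$, with base case $n=1$.

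Your main argument is a different, direct route: you track the running weight of prefixes and observe that the first time it reaches or passes $n$ it lands either exactly on $n$ (giving a prefix in $L_n$) or on $n+1$ via a jump of size $2$ (giving a prefix in $L_{n-1}\two$). The paper's induction is shorter and keeps both covering and disjointness packaged together at each step, whereas your approach makes the dichotomy explicit and independent of any recursion; it also yields the uniqueness principle (at most one prefix of each weight) as a byproduct, which is a useful standalone observation. Your disjointness argument is a bit more elaborate than necessary: once you know $u'$ is a proper prefix of $u$ and $u$ is a prefix of $u'\two$, the length inequality $|u'|<|u|\le |u'|+1$ forces $u=u'\two$, immediately contradicting the weight count.
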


\begin{proof} It is true for $n=1$: we have $L_1=\{\one\}$ and $L_0=\{\varnothing\}$ and $\xo=\one\xo\sqcup \two\xo$. Suppose it is true for $n$. Then we have \begin{multline*}\xo=L_n\xo\sqcup L_{n-1}\two\xo=\\
L_n\one\xo\sqcup L_n\two\xo\sqcup L_{n-1}\two=\\ (L_n\one\cup L_{n-1}\two)\xo\cup L_n\two\xo=L_{n+1}\xo\cup L_n\two\xo.
\end{multline*}
\end{proof}

We say that two finite words $v_1$, $v_2$ are \emph{incomparable} if
neither of them is a beginning of the other.
For a set $A$ of pairwise incomparable words, we denote by $\symm(A)$
(resp.\ $\alt(A)$) the group of all (resp.\ even) permutations of $A$
seen as homeomorphisms of $\xo$. If $\alpha$ is a permutation of $A$,
then the corresponding homeomorphism of $\xo$ acts by the rule
$\alpha(vw)=\alpha(v)w$ for $v\in A$, and $\alpha(w)=w$ for $w\notin A\xo$.

The groups $\symm(L_nt)$, $t\in\{\one, \two\}$, are naturally isomorphic to $\symm(L_n)$, where the isomorphis is induced by the bijection $v\mapsto vt$.

By Lemma~\ref{lem:splitting}, the groups $\symm(L_n)$ and
$\symm(L_{n-1}\two)$ act on disjoint subsets of $\xs$, hence they commute. Denote
$\symm(L_n)\oplus\symm(L_{n-1}\two)=\langle\symm(L_n)\cup\symm(L_{n-1}\two)\rangle$. The
group $\alt(L_n)\oplus\alt(L_{n-1}\two)$ is defined in the same way.

Note that
\[\symm(L_n)\oplus\symm(L_{n-1}\two)<\symm(L_{n+1})\oplus\symm(L_n\two),\]
where $\symm(L_n)$ is embedded diagonally into the direct sum by the homomorphism induced by the natural maps
\[v\mapsto v\one:L_n\arr L_n\one\subset L_{n+1}\]
and
\[v\mapsto v\two:L_n\arr L_n\two,\]
and $\symm(L_{n-1}\two)$ is embedded isomorphically to the factor $\symm(L_{n+1})$ by the natural inclusion $L_{n-1}\two\subset L_{n+1}$. The same is true for the embedding \[\alt(L_n)\oplus\alt(L_{n-1}\two)<\alt(L_{n+1})\oplus\alt(L_n\two).\]

Denote by $\symm_\omega$ and $\alt_\omega$ the unions of the groups
$\symm(L_n)\oplus\symm(L_{n-1}\two)$ and $\alt(L_n)\oplus\alt(L_{n-1}\two)$,
respectively, i.e., the direct limit of the described embeddings.

\begin{proposition}
\label{pr:symabel}
The quotient $\symm_\omega/\alt_\omega$ is isomorphic to $(\Z/2\Z)^2$
and is  equal to the set of images of $a_0, a_1, a_2, 1$.
\end{proposition}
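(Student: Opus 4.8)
The plan is to compare $\symm_\omega$ and $\alt_\omega$ one level at a time. Write $G_n=\symm(L_n)\oplus\symm(L_{n-1}\two)$ and $A_n=\alt(L_n)\oplus\alt(L_{n-1}\two)$, so that $\symm_\omega=\bigcup_{n\ge 3}G_n$ and $\alt_\omega=\bigcup_{n\ge 3}A_n$ are nested increasing unions, and $\alt_\omega$ is normal in $\symm_\omega$ since each $A_n$ is normal in $G_n$. Because $|L_n|$ is the Fibonacci sequence $1,1,2,3,5,\dots$, for $n\ge 3$ we have $|L_n|\ge 3$ and $|L_{n-1}|\ge 2$, so $\alt(L_n)$ and $\alt(L_{n-1}\two)$ have index $2$ in $\symm(L_n)$ and $\symm(L_{n-1}\two)$ respectively, and the pair of sign homomorphisms gives a surjection $\mathrm{sgn}_n\colon G_n\twoheadrightarrow(\Z/2\Z)^2$ with kernel exactly $A_n$.

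The first step is to check that $G_n\cap\alt_\omega=A_n$ for every $n\ge 3$; for this it suffices to show $G_n\cap A_{n+1}\subseteq A_n$, since then $G_n\cap A_m=A_n$ for all $m\ge n$ by induction, whence $G_n\cap\alt_\omega=\bigcup_{m\ge n}(G_n\cap A_m)=A_n$. So suppose $g\in G_n\setminus A_n$ and describe its action by a permutation $\alpha$ of $L_n$ together with a permutation $\beta$ of $L_{n-1}\two$, at least one of which is odd. Under the embedding $G_n\hookrightarrow G_{n+1}$ — which uses $L_{n+1}=L_n\one\sqcup L_{n-1}\two$, sends $\alpha$ diagonally (to $L_n\one\subset L_{n+1}$ and to $L_n\two$) and sends $\beta$ to the copy $L_{n-1}\two\subset L_{n+1}$ — the element $g$ acts on $L_n\two$ as a copy of $\alpha$, and on $L_{n+1}$ as $\alpha$ on $L_n\one$ and $\beta$ on $L_{n-1}\two$. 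Hence the sign of $g$ on $L_n\two$ equals that of $\alpha$, and its sign on $L_{n+1}$ is the sum of those of $\alpha$ and $\beta$; if $\alpha$ is odd the former is nonzero, and if $\alpha$ is even while $\beta$ is odd the latter is nonzero, so $g\notin A_{n+1}$ in either case. (Equivalently, $\mathrm{sgn}_{n+1}$ restricted to $G_n$ is $\mathrm{sgn}_n$ followed by the invertible $\Z/2\Z$-linear map $(x,y)\mapsto(x+y,x)$.)

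It follows that for every $n\ge 3$ the image of $G_n$ in $\symm_\omega/\alt_\omega$ equals $G_n/(G_n\cap\alt_\omega)=G_n/A_n\cong(\Z/2\Z)^2$, a group of order $4$; these images increase with $n$ and their union is $\symm_\omega/\alt_\omega$, so $\symm_\omega/\alt_\omega$ itself has order $4$. Finally I would examine level $4$. One checks from the defining formulas that $a_0,a_1,a_2$ preserve the decomposition $\xo=L_4\xo\sqcup L_3\two\xo$ and permute the five words of $L_4$ — respectively the three words of $L_3\two$ — among themselves, so $a_0,a_1,a_2\in G_4$. Computing $\mathrm{sgn}_4$: $a_0$ acts on $L_4$ as a product of two transpositions and on $L_3\two$ as a single transposition; $a_1$ acts as a single transposition on each of $L_4$ and $L_3\two$; $a_2$ acts as a single transposition on $L_4$ and trivially on $L_3\two$. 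Thus $\mathrm{sgn}_4(a_0),\mathrm{sgn}_4(a_1),\mathrm{sgn}_4(a_2)$ are the three distinct non-zero vectors of $(\Z/2\Z)^2$, so $1,a_0,a_1,a_2$ represent four distinct cosets; as there are only four, these exhaust $\symm_\omega/\alt_\omega$, and since $a_0,a_1,a_2$ are involutions the quotient is $(\Z/2\Z)^2$ rather than $\Z/4\Z$.

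The main obstacle is the middle step: tracking how the parity of a permutation on $L_n$ and on $L_{n-1}\two$ is transported to $L_{n+1}$ and $L_n\two$ through the ``diagonal plus shift'' embedding dictated by $L_{n+1}=L_n\one\sqcup L_{n-1}\two$. Everything else — the index count from the Fibonacci recursion, the colimit argument bounding $|\symm_\omega/\alt_\omega|$, and the level-$4$ sign computation — is routine bookkeeping.
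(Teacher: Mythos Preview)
Your proof is correct and follows essentially the same approach as the paper's: both arguments identify $G_n/A_n\cong(\Z/2\Z)^2$ via the pair of sign homomorphisms, compute that the embedding $G_n\hookrightarrow G_{n+1}$ induces the invertible map $(x,y)\mapsto(x+y,x)$ on these quotients (so nothing outside $A_n$ can fall into $\alt_\omega$), and then check the signs of $a_0,a_1,a_2$ explicitly. The only cosmetic differences are that the paper phrases the stability step via the sequences $\xi_g=(t_i)_{i\ge n}$ rather than your cleaner statement $G_n\cap\alt_\omega=A_n$, and that the paper computes each $a_i$ at its minimal level $i+2$ and iterates the transition map, whereas you compute all three directly at level~$4$; the substance is the same.
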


\begin{proof}
The quotient
$(\symm(L_n)\oplus\symm(L_{n-1}\two))/(\alt(L_n)\oplus\alt(L_{n-1}\two)$ is
naturally isomorphic to $\Z/2\Z\oplus\Z/2\Z$. It follows from the
description of the embedding
$\symm(L_n)\oplus\symm(L_{n-1}\two)\hookrightarrow\symm(L_{n+1})\oplus\symm(L_n\two)$
that if $(a, b)$ is the image
of an element $g\in\symm(L_n)\oplus\symm(L_{n-1}\two)$ in the quotient
$(\symm(L_n)\oplus\symm(L_{n-1}\two))/(\alt(L_n)\oplus\alt(L_{n-1}\two)$,
then the image of the same element in the quotient
$(\symm(L_{n+1})\oplus\symm(L_n\two))/(\alt(L_{n+1})\oplus\alt(L_n\two))$ is
$(a+b, a)$.

Note that the map $(a, b)\mapsto (a+b, a)$ is an
automorphism of $(\Z/2\Z)^2$, and that the orbit of any non-zero
element of $(\Z/2\Z)^2$ belongs to the cycle $(1, 0)\mapsto (1,
1)\mapsto (0, 1)\mapsto (1, 0)$.

Let $g\in\symm_\omega$, and $n$ be such that
$g\in\symm(L_n)\oplus\symm(L_{n-1}\two)$. Consider then the sequence
$\xi_g=(t_i)_{i\ge n}$ of the images of $g$ in the quotients
$(\symm(L_i)\oplus\symm(L_{i-1}\two))/(\alt(L_i)\oplus\alt(L_{i-1}\two)$. 
Note that the sequence $\xi_g$ is defined only starting from some coordinate. We identify two sequences if they are equal in all coordinates where both of them are defined.

Then for every $g\in\symm_\omega$, the sequence $\xi_g$ is either
equivalent to the constant zero sequence, or to one of the three
shifts of the sequence \[(1, 0), (1, 1), (0, 1), (1, 0), (1, 1), (0,
1), \ldots.\] The sequence $\xi_g$ is equivalent to the constant
zero sequence if and only if $g\in\alt_\omega$. It follows that
$\symm_\omega/\alt_\omega$ is isomorphic to group of equivalence
classes of the sequences
\begin{gather*}((0, 0), (0, 0), (0, 0), \ldots),\\
((1, 0), (1, 1), (0, 1), \ldots),\\
((1, 1), (0, 1), (1, 0), \ldots),\\
((0, 1), (1, 0), (1, 1), \ldots),
\end{gather*}
 which is isomorphic to $(\Z/2\Z)^2$.

The elements $a_0, a_1, a_2$ are equal to the permutations $(\one\one,
\two)\in\symm(L_2)$, $(\one\one\one, \one \two)\in\symm(L_3)$, $(\two\one\one, \two\two)\in\symm(L_4)$,
hence the corresponding sequences $\xi_{a_0}, \xi_{a_1}, \xi_{a_2}$
are
\[
\begin{array}{rrrrrrl}
*, & (1, 0), & (1, 1), & (0, 1), & (1, 0), & (1, 1), &\ldots\\
*, &  *,     & (1, 0), & (1, 1), & (0, 1), & (1, 0), &\ldots\\
*, &  *,     & *,      & (1, 0), & (1, 1), & (0, 1), & \ldots,
\end{array}\]
where asterisk marks the places where the sequence is not defined. We
see that the images of $a_0, a_1, a_2$ are all the non-trivial
elements of $\symm_\omega/\alt_\omega$.
\end{proof}

\subsection{The action of the elements of $F$}

Let $x$ be one of the letters $a, b, c, d$, and let $v\in\xs$. We denote then by $x_v$ the homeomorphism of $\xo$ defined by the rule
\[x_v(w)=\left\{\begin{array}{ll} vx_0(u) & \text{if $w=vu$,}\\ w & \text{if $w\notin v\xo$.}\end{array}\right.\]

Denote then, for every non-negative integer $k$:
\[x_{3k}=x_{(\one\two)^k},\qquad x_{3k+1}=x_{\one(\one\two)^k},\qquad x_{3k+2}=x_{\two(\one\two)^k}.\]
Note that this definition agrees with the original definitions of $x_0, x_1, x_2$.

We have $a_n\in\symm(L_{n+2})<\symm(L_{n+2})\oplus\symm(L_{n+1}\two)$, and
\[b_n=a_nc_{n+3},\qquad c_n=a_nd_{n+3},\qquad d_n=b_{n+3}.\]

It follows that we have the following equalities:
\begin{equation}
\label{eq:eq1}
b_i=a_ic_{i+3}=a_ia_{i+3}d_{i+6}=a_ia_{i+3}b_{i+9}=a_ia_{i+3}a_{i+9}c_{i+12}=\cdots,
\end{equation}
\begin{equation}
\label{eq:eq2}
c_i=a_id_{i+3}=a_ib_{i+6}=a_ia_{i+6}c_{i+9}=a_ia_{i+6}a_{i+9}d_{i+12}=\cdots,
\end{equation}
\begin{equation}
\label{eq:eq3}
d_i=b_{i+3}=a_{i+3}c_{i+6}=a_{i+3}a_{i+6}d_{i+9}=a_{i+3}a_{i+6}b_{i+12}=\cdots,
\end{equation}

We will need the following direct corollary of
equations~\eqref{eq:eq1}--\eqref{eq:eq3}.

\begin{lemma}
\label{lem:deepaction}
Let $n$ be a positive integer, and let $i\in\{0, 1, 2\}$ be such that
$n\equiv i\pmod{3}$. Let $y=b$ if $n-i\equiv 6\pmod{9}$, $y=c$ if
$n-i\equiv 3\pmod{9}$, and $y=d$ if $n-i\equiv 0\pmod{9}$.

Then $y_i=ha_{n-3}d_n$, where $h\in\symm(L_{n-4})\oplus\symm(L_{n-5}\two)$.
\end{lemma}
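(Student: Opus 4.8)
The plan is to unwind exactly one of the three chains \eqref{eq:eq1}--\eqref{eq:eq3} until its tail symbol becomes $d_n$, and then to read off $h$ and $a_{n-3}$ from the remaining expansion. The chains are driven by the elementary rewriting rules $b_m = a_m c_{m+3}$, $c_m = a_m d_{m+3}$, $d_m = b_{m+3}$ recorded just before \eqref{eq:eq1}. Starting from $y_i$ and applying these repeatedly produces, at step $k$, a symbol whose subscript is $i+3k$ and whose letter runs through the period-$3$ pattern $b\to c\to d\to b\to\cdots$ beginning at $y$; a factor $a_{i+3k}$ is split off at step $k$ precisely when the step's letter is $b$ or $c$ (the $d$-step contributes no $a$). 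First I would check that the letter at step $k$ equals $d$ exactly when $k$ is congruent to $0$, $1$, or $2$ modulo $3$ according as $y$ equals $d$, $c$, or $b$. Since the subscript at step $k$ is $i+3k$, so that $n-i=3k$, this is the same as saying that $d_n$ appears in the expansion of $d_i$, $c_i$, or $b_i$ exactly when $n-i\equiv 0$, $3$, or $6\pmod 9$ — precisely the case distinction in the statement. Hence with the prescribed letter $y$ the expansion of $y_i$ does reach $d_n$, say at step $k=(n-i)/3$.

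Next, iterating the rewriting rules up to step $k$ gives
\[y_i = a_i^{\epsilon_0}\,a_{i+3}^{\epsilon_1}\cdots a_{i+3(k-1)}^{\epsilon_{k-1}}\,d_n,\]
where $\epsilon_j\in\{0,1\}$ is $1$ exactly when the letter at step $j$ lies in $\{b,c\}$. The point I would verify next is the behavior of the last two steps: since the letter at step $k$ is $d$ and the letters cycle $b\to c\to d$, the letter at step $k-1$ is $c$ and the letter at step $k-2$ is $b$, so $\epsilon_{k-1}=\epsilon_{k-2}=1$; the last factor before $d_n$ is therefore $a_{i+3(k-1)}=a_{n-3}$, and the largest subscript occurring in the remaining product is $i+3(k-2)=n-6$. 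Setting $h=a_i^{\epsilon_0}\cdots a_{i+3(k-2)}^{\epsilon_{k-2}}$ we obtain $y_i=h\,a_{n-3}\,d_n$. (When $k\le 1$, i.e.\ $n-i\le 3$, the product $h$ is empty and the same formula holds with $h=\unit$, reading $y_i=a_{n-3}d_n$ when $k=1$.)

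Finally I would locate $h$ in $\symm_\omega$. Every factor $a_m$ of $h$ has $m\le n-6$, so by the remark preceding \eqref{eq:eq1} it lies in $\symm(L_{m+2})$ with $m+2\le n-4$; the chain of embeddings $\symm(L_j)\hookrightarrow\symm(L_j)\oplus\symm(L_{j-1}\two)\hookrightarrow\symm(L_{j+1})\oplus\symm(L_j\two)\hookrightarrow\cdots$ then places $\symm(L_{m+2})$ inside $\symm(L_{n-4})\oplus\symm(L_{n-5}\two)$, and since the latter is a subgroup it contains the product $h$. The only delicate part of the whole argument is the period-$3$ (letters) and period-$9$ (subscripts) bookkeeping, which is routine once the pattern is pinned down; no idea beyond \eqref{eq:eq1}--\eqref{eq:eq3} and the inclusion $a_n\in\symm(L_{n+2})$ is needed.
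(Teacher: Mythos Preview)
Your argument is correct and is exactly what the paper has in mind: the paper states the lemma as a ``direct corollary of equations~\eqref{eq:eq1}--\eqref{eq:eq3}'' without further proof, and you have simply spelled out that corollary by tracking the period-$3$ cycle of letters and the $a$-factors split off along the way, then invoking $a_m\in\symm(L_{m+2})$ together with the tower of embeddings. One small remark: your aside about $\epsilon_{k-2}=1$ is not actually needed (any factor of $h$ has subscript at most $n-6$ regardless), and the $k=0$ edge case is degenerate since $a_{n-3}$ and $\symm(L_{n-4})$ are not defined for the smallest values of $n$---but this is an artifact of the lemma's phrasing, not of your argument, and the lemma is only applied in the paper with $n$ large enough that the issue does not arise.
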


We say that two sequences $w_1, w_2\in\xo$ are \emph{cofinal} if there exist finite words $v_1, v_2\in\xs$ of equal weight and an infinite word $w\in\xo$ such that $w_1=v_1w$ and $w_2=v_1w$.

Note that no two sequences from the set $R=\{(\one\two)^\omega, \one(\one\two)^\omega, \two(\one\two)^\omega\}$ are cofinal, but every sequence of the form $v(\one\two)^\omega$, where $v\in\xs$, is cofinal to one of the sequences from the set $R$.

It follows directly from the definition of the generators of $F$ that elements of $F$ preserve cofinality classes of sequences. Moreover, the next description of local action of elements of $F$ on $\xo$ is easy to prove by induction on the length of $g$.

\begin{proposition}
Let $g\in F$ be an arbitrary element.

If $u\in\xo$ is not cofinal to any of the elements of $R=\{(\one\two)^\omega, \one(\one\two)^\omega, \two(\one\two)^\omega\}$, then there exists a finite beginning $v_1\in\xs$ of $u$ and a word $v_2\in\xs$ of weight equal to the weight of $v_1$ such that \[g(v_1w)=v_2w\] for all $w\in\xo$.

If $u\in\xo$ is cofinal to an element of $R$ then there exists a finite beginning $v_1\in\xs$ of $u$, a word $v_2\in\xs$ of weight equal to the weight of $v_1$, and an element $h\in\{\unit, b_0, c_0, d_0\}$ such that
\[g(v_1w)=v_2h(w)\]
for all $w\in\xo$.
\end{proposition}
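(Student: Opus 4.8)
The plan is to prove the statement by induction on the length $n$ of a word $g=g_1g_2\cdots g_n$ in the generators $a_i,b_i,c_i,d_i$ ($i\in\{0,1,2\}$) representing $g$. For $n=0$ one takes $v_1=v_2=\varnothing$ and $h=\unit$. For the inductive step I would write $g=sg'$ with $s$ a generator and $g'$ of length $n-1$, fix $u\in\xo$, and apply the induction hypothesis to $g'$ and $u$: this yields a beginning $v_1'$ of $u$, a word $v_2'$ with $\mathrm{wt}(v_2')=\mathrm{wt}(v_1')$, and $h'\in\{\unit,b_0,c_0,d_0\}$ (equal to $\unit$ unless $u$ is cofinal to a point of $R$) such that $g'(v_1'w)=v_2'h'(w)$ for all $w\in\xo$. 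Since $h'$ is a bijection of $\xo$, the element $g'$ carries the cylinder $v_1'\xo$ onto $v_2'\xo$, and the remaining task is to describe how the single generator $s$ acts on $v_2'\xo$ and then to compose.

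The key input is a completely explicit description of the action of one generator on one cylinder, obtained by reading off the defining formulas for $a_0,b_0,c_0,d_0$ together with the relations $b_i=a_ic_{i+3}$, $c_i=a_id_{i+3}$, $d_i=b_{i+3}$ of equations~\eqref{eq:eq1}--\eqref{eq:eq3}: for a generator $s$ and a word $v$, after lengthening $v$ by a bounded number of letters along its own continuation, either (a) there is a word $v^\ast$ with $\mathrm{wt}(v^\ast)=\mathrm{wt}(v)$ and $s(vw)=v^\ast w$ for all $w$, or (b) $v$ is a beginning of one of $(\one\two)^\omega$, $\one(\one\two)^\omega$, $\two(\one\two)^\omega$, and there are a word $v^\ast$ with $\mathrm{wt}(v^\ast)=\mathrm{wt}(v)$ and an element $h''\in\{\unit,b_0,c_0,d_0\}$ with $s(vw)=v^\ast h''(w)$ for all $w$. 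The reason only these three sequences occur is structural: $a_0$ is trivial on $\one\two\xo$ and is the (weight-preserving) transposition $\one\one\leftrightarrow\two$ elsewhere, so $a_0$, and hence $a_1,a_2$, have a finite ``active region'' and always fall under (a); whereas $b_0,c_0,d_0$ fold $\one\two\xo$ into itself while permuting one another, so that their recursion escapes to infinity exactly along $(\one\two)^\omega$, and since $b_1,c_1,d_1$ and $b_2,c_2,d_2$ conjugate this behaviour into the branch $\one\xo$, respectively $\two\xo$, they recurse along $\one(\one\two)^\omega$, respectively $\two(\one\two)^\omega$; thus case (b) occurs precisely when $v$ is a beginning of the relevant sequence of $R$.

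Granting this description, the inductive step closes quickly. If $u$ is not cofinal to $R$ then $h'=\unit$, so $g'(v_1'xw)=v_2'xw$ for every letter $x$ and every $w$; hence $v_1'$ may be lengthened along $u$ with $v_2'$ lengthened by the same suffix, keeping the weights equal and keeping $v_2'$ a beginning of $g'(u)$. Since $g'$ preserves cofinality classes (as observed above), $g'(u)$ is not cofinal to $R$, hence is none of the three sequences of $R$, so a long enough beginning $v_2'$ of $g'(u)$ is not a beginning of any of them; then alternative (a) applies to $s$ and $v_2'$ and $g(v_1'w)=v_2^\ast w$ with $\mathrm{wt}(v_2^\ast)=\mathrm{wt}(v_1')$, and one takes $h=\unit$. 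If $u$ is cofinal to $r\in R$, I would first lengthen $v_1'$ along $u$ by a bounded amount so that $h'$ ``resolves'' on the newly appended cylinder (applying the same description to $h'\in\{\unit,b_0,c_0,d_0\}$, which maps deep enough cylinders to cylinders of equal weight), turning the formula into $g'(v_1w)=v_2h_0(w)$ with $h_0\in\{\unit,b_0,c_0,d_0\}$ and $v_2$ a beginning of $g'(u)$; lengthening a little further I can arrange that $v_2$ is either not a beginning of any sequence of $R$ or is a ``good'' beginning (of the form $(\one\two)^k$, $\one(\one\two)^k$, $\two(\one\two)^k$), since the only obstruction is a beginning of the form $(\one\two)^k\one$, $\one(\one\two)^k\one$, $\two(\one\two)^k\one$, which becomes good or non-special after one more letter of $u$. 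Applying $s$ then gives either $g(v_1w)=v_2^\ast h_0(w)$ (alternative (a), take $h=h_0$) or $g(v_1w)=v_2^\ast h''(h_0(w))$ (alternative (b)), and $h:=h''h_0$ again lies in $\{\unit,b_0,c_0,d_0\}$ because, as noted in the text, this set is a subgroup isomorphic to $(\Z/2\Z)^2$. This completes the induction.

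The hard part is not conceptual but the bookkeeping in the last step: one must be careful that composing generators never creates an infinite recursion path outside the three sequences of $R$, and that every time a generator meets a beginning of one of those sequences it either terminates in alternative (a) or passes it along in alternative (b) with the tail correction confined to $\{\unit,b_0,c_0,d_0\}$---and that all the ``lengthen the cylinder by a bounded amount'' moves can be carried out along $u$ without ever looping. All of this is forced by the finite-automaton description of the generators and by the closure of $\{\unit,b_0,c_0,d_0\}$ under composition, so that no genuinely new argument and no quantitative estimate is needed, only patience.
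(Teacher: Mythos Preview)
Your proposal is correct and follows exactly the approach the paper indicates: the paper itself offers no detailed proof, only the remark that the statement ``is easy to prove by induction on the length of $g$,'' and your argument carries out precisely this induction, using the explicit recursion of the generators and the fact (noted in the paper) that $\{\unit,b_0,c_0,d_0\}$ is a group. The only comment is that your phrase ``lengthening by a bounded number of letters'' is slightly imprecise (the bound is at most two letters, to resolve whether the next block is $\one\one$, $\one\two$, or $\two$), but this does not affect the argument.
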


\begin{corollary}
\label{cor:tables}
Let $g\in F$. Then there exist finite sequences \[v_1, v_2, \ldots, v_n, u_1, u_2, \ldots, u_n\in\xs,\] \[h_1, h_2, \ldots, h_n\in\{\unit, b_0, c_0, d_0\},\] such that $\{v_1, v_2, \ldots, v_n\}$ and $\{u_1, u_2, \ldots, u_n\}$ are maximal sets of pairwise incomparable words, weight of $v_i$ is equal to the weight of $u_i$, and for every $w\in\xo$ we have
\[g(v_iw)=u_ih_i(w).\]
\end{corollary}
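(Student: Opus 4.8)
The statement is a compactness consequence of the Proposition, plus one extra subdivision needed because the residual germs $b_0,c_0,d_0$ act self‑similarly along the $\one\two$‑spine and so cannot be eliminated by passing to a deeper cylinder; they have to be allowed as the final residuals $h_i$, which is exactly what the Corollary permits.

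First I would apply the Proposition to $g$. For every $u\in\xo$ it produces a finite beginning $v_1(u)\in\xs$ of $u$, a word $v_2(u)\in\xs$ of the same weight, and an element $h_u\in\{\unit,b_0,c_0,d_0\}$ (with $h_u=\unit$ precisely when $u$ is not cofinal to $R$) such that $g(v_1(u)w)=v_2(u)\,h_u(w)$ for \emph{all} $w\in\xo$, in particular on the whole cylinder $v_1(u)\xo$. These cylinders form an open cover of the compact space $\xo$, so finitely many of them already cover it; let $N$ be the maximum length of the corresponding words $v_1(u)$. Then the set $\{p_1,\dots,p_m\}$ of \emph{all} words of length $N$ is a maximal set of pairwise incomparable words, and each $p_j$ extends one of the chosen $v_1(u)$ (pick a point of $p_j\xo$; it lies in some $v_1(u)\xo$, and $|v_1(u)|\le N=|p_j|$ forces $v_1(u)$ to be a prefix of $p_j$). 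Writing $p_j=v_1(u_j)r_j$ we get $g(p_jy)=v_2(u_j)\,h_{u_j}(r_jy)$ for all $y\in\xo$.

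For the indices $j$ with $h_{u_j}=\unit$ this is already of the desired form: take $u_j':=v_2(u_j)r_j$ and residual $\unit$. For the remaining indices I would resolve $y\mapsto h_{u_j}(r_jy)$ by the following sub-claim, which I would prove by induction on the length of $r$ straight from the defining formulas for $b_0,c_0,d_0$: for every $h\in\{\unit,b_0,c_0,d_0\}$ and every $r\in\xs$ there are finitely many words $e_1,\dots,e_k$ with $\bigsqcup_l e_l\xo=\xo$, words $q_l\in\xs$ with weight equal to that of $re_l$, and elements $h_l\in\{\unit,b_0,c_0,d_0\}$, such that $h(re_lw)=q_lh_l(w)$ for all $w\in\xo$. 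In the induction step one uses that on $\two\xo$ and on $\one\one\xo$ every $h$ acts by a length‑one (resp. length‑two) prefix replacement with trivial residual, while on $\one\two\xo$ one has $h(\one\two w)=\one\two h'(w)$ with $h'\in\{\unit,b_0,c_0,d_0\}$, so that the case $r=\one\two r''$ reduces to $h'$ and the shorter word $r''$. Substituting the sub-claim applied to $h_{u_j}$ and $r_j$ into $g(p_jy)=v_2(u_j)h_{u_j}(r_jy)$ gives $g(p_je_lw)=v_2(u_j)q_l\,h_l(w)$ for all $w$; the words $v_{j,l}:=p_je_l$, $u_{j,l}:=v_2(u_j)q_l$ and the residuals $h_{j,l}:=h_l$ then satisfy $g(v_{j,l}w)=u_{j,l}h_{j,l}(w)$, and the weight of $v_{j,l}$ equals weight$(v_1(u_j))+$weight$(r_je_l)=$weight$(v_2(u_j))+$weight$(q_l)=$weight of $u_{j,l}$.

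It remains to verify the antichain conditions for the families $\{v_{j,l}\}$ and $\{u_{j,l}\}$. For fixed $j$ the cylinders $p_je_l\xo$ partition $p_j\xo$ (because the $e_l\xo$ partition $\xo$), and the $p_j\xo$ partition $\xo$, so the $v_{j,l}\xo$ partition $\xo$; hence $\{v_{j,l}\}$ is a maximal set of pairwise incomparable words. Finally $g$ is a homeomorphism of $\xo$, so the sets $g(v_{j,l}\xo)=u_{j,l}h_{j,l}(\xo)=u_{j,l}\xo$ also partition $\xo$, giving the same property for $\{u_{j,l}\}$. The only genuine work here is the sub-claim (the self-similarity obstruction); the rest is routine bookkeeping with weights and cylinders.
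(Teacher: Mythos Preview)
Your argument is correct. The paper gives no proof at all for this corollary---it is simply stated as following directly from the preceding Proposition---so there is nothing substantive to compare your approach against; what you have written is precisely the natural way to fill in the omitted details, namely a compactness argument to pass from a pointwise local form to a finite clopen partition, followed by the observation that the residuals $b_0,c_0,d_0$ can be pushed past any finite prefix by their self-similar recursion (your sub-claim). One cosmetic point: your parenthetical ``with $h_u=\unit$ precisely when $u$ is not cofinal to $R$'' overstates the Proposition slightly (cofinality to $R$ allows $h_u\in\{\unit,b_0,c_0,d_0\}$, so $h_u$ may still be trivial there), but this plays no role in the argument.
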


Note that the element $g$ is uniquely described by the sequences $v_1, v_2, \ldots, v_n$, $u_1, u_2, \ldots, u_n$, and $h_1, h_2, \ldots, h_n$.
\subsection{The recursive structure of the orbital graphs of $F$}
The edges of the orbital graphs of $F$ belong to one of the
following types.
\begin{eqnarray*}
(\one\two)^k\one\one w &\edge{e_{3k}}& (\one\two)^k\two w,\\
\one(\one\two)^k\one\one w &\edge{e_{3k+1}}& \one(\one\two)^k\two w,\\
\two(\one\two)^k\one\one w &\edge{e_{3k+2}}& \two(\one\two)^k\two w,
\end{eqnarray*}
where $w\in\xo$.

The labels $e_k$ stand for the following labelings by the generators
\begin{equation}
\label{eq:edgelabels}
e_{3k+i}=\left\{\begin{array}{rl} a_i, b_i, c_i & \text{if $k=0$,}\\
b_i, c_i & \text{if $k\equiv 0\pmod{3}$ and $k>0$,}\\
b_i, d_i & \text{if $k\equiv 1\pmod{3}$,}\\
c_i, d_i & \text{if $k\equiv 2\pmod{3}$,}\end{array}\right.
\end{equation}
where $i=0, 1, 2$.
Thus, for $k\ge 3$, the label $e_k$ is determined by the residue of $k$ modulo 9.

We will define now graphs $I_k$ with the vertex set
$L_k$. Each of the graphs $I_k$ will be a chain with a fixed choice of the
left/right direction.

For a finite or infinite word $v$, we denote by $I_kv$ the graph
obtained from $I_k$ by appending $v$ to the name of each vertex of
$I_k$.

The graphs $I_0$ and $I_1$ are single vertices
$\varnothing$ and $\one$, respectively.
Define inductively $I_k$ by the rule:
\[I_k=I_{k-2}^{-1}\two\edge{e_{k-2}}I_{k-1}^{-1}\one.\]

\begin{proposition}
\label{pr:endsofIn}
For every infinite word $w$ an orbital graph of $F$ contains $I_nw$. Denote by $P_n$ and $Q_n$ the left and the right endpoints of the chain $I_n$, respectively. Then
\[P_n=(\one\two)^{n/3},\quad \one(\one\two)^{(n-1)/3},\quad\text{or\ }\two(\one\two)^{(n-2)/3},\]
and
\[Q_n=(\two\one)^{n/3},\quad \one(\two\one)^{(n-1)/3},\quad\text{or\ }\one\one(\two\one)^{(n-2)/3},\]
depending on the residue $i=0, 1, 2$ of $n$ modulo $3$.
\end{proposition}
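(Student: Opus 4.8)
The plan is to prove both assertions simultaneously, by induction on $n$, in the following strengthened form: $I_n$ is a finite chain with vertex set $L_n$, its left and right endpoints are the words $P_n$ and $Q_n$ described in the statement, and for every $w\in\xo$ the graph $I_nw$ is a subgraph of the orbital graph $\Gamma_{P_nw}$. (Since $I_n$ is connected by edges labelled by generators, all vertices of $L_nw$ lie in a single $F$-orbit, so the choice of $P_nw$ here is inessential.) The base cases $n=0,1$ are immediate: $I_0=\{\varnothing\}$ and $I_1=\{\one\}$ are single vertices, $P_0=Q_0=\varnothing$ and $P_1=Q_1=\one$ agree with the claimed formulas, and $I_0w=\{w\}$, $I_1w=\{\one w\}$ are single vertices of $\Gamma_w$ and $\Gamma_{\one w}$.

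For the inductive step fix $n\ge 2$ and assume the claim for all smaller indices. The defining relation $I_n=I_{n-2}^{-1}\two\edge{e_{n-2}}I_{n-1}^{-1}\one$, together with $L_n=L_{n-1}\one\sqcup L_{n-2}\two$ from~\eqref{eq:Ln}, shows at once that $I_n$ is a chain (two chains joined along a single edge at one endpoint of each) with vertex set $L_{n-2}\two\sqcup L_{n-1}\one=L_n$. Reading the recursion from left to right, and recalling that reversal exchanges the two endpoints, the left endpoint of $I_n$ is $Q_{n-2}\two$, its right endpoint is $P_{n-1}\one$, and the joining edge $e_{n-2}$ runs between $P_{n-2}\two$ and $Q_{n-1}\one$.

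Two bookkeeping verifications remain, each splitting into the cases $n\equiv 0,1,2\pmod 3$. First, substituting the inductive closed forms for $P_{n-1}$ and $Q_{n-2}$ and using the elementary re-bracketings $v(uv)^m=(vu)^mv$ for $\{u,v\}=\{\one,\two\}$ (so that, for instance, $\one(\two\one)^m\two=(\one\two)^{m+1}$ and $(\two\one)^m\two=\two(\one\two)^m$), one checks that $Q_{n-2}\two$ and $P_{n-1}\one$ coincide with the formulas asserted for $P_n$ and $Q_n$. Second, writing $n-2=3j+i$ with $i\in\{0,1,2\}$, the same closed forms identify the pair $\{P_{n-2}\two,\,Q_{n-1}\one\}$ as $\{(\one\two)^j\two,\,(\one\two)^j\one\one\}$, $\{\one(\one\two)^j\two,\,\one(\one\two)^j\one\one\}$, or $\{\two(\one\two)^j\two,\,\two(\one\two)^j\one\one\}$ according to $i=0,1,2$; in each case the list of edge types shows that, for every $w\in\xo$, the vertices $P_{n-2}\two w$ and $Q_{n-1}\one w$ are joined in an orbital graph of $F$ by an edge labelled $e_{n-2}=e_{3j+i}$.

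To finish, apply the inductive hypothesis to the infinite words $\two w$ and $\one w$: the graph $I_{n-2}(\two w)$ is a subgraph of $\Gamma_{P_{n-2}\two w}$ with vertex set $(L_{n-2}\two)w$, and $I_{n-1}(\one w)$ is a subgraph of $\Gamma_{Q_{n-1}\one w}$ with vertex set $(L_{n-1}\one)w$. The joining edge just produced connects a vertex of the first graph to a vertex of the second by a generator of $F$, so these two points lie in a common $F$-orbit, the two orbital graphs coincide, and $I_nw=I_{n-2}^{-1}(\two w)\edge{e_{n-2}}I_{n-1}^{-1}(\one w)$ is a subgraph of it. The only real work is the two case analyses modulo $3$ — tracking which of $P_\bullet,Q_\bullet$ occurs at which level, and the word identities between powers of $\one\two$ and $\two\one$ — and no genuine difficulty arises there.
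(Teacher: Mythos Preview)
Your proof is correct and follows exactly the approach the paper intends: the paper's proof consists solely of the phrase ``Induction on $n$'', and what you have written is precisely that induction carried out in full, including the verification that the recursion $I_n=I_{n-2}^{-1}\two\edge{e_{n-2}}I_{n-1}^{-1}\one$ yields $P_n=Q_{n-2}\two$, $Q_n=P_{n-1}\one$, and that the joining edge between $P_{n-2}\two w$ and $Q_{n-1}\one w$ really is an edge of type $e_{n-2}$ in the orbital graph.
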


\begin{proof}
Induction on $n$.
\end{proof}

Since minimality of substitutional shifts is equivalent to linear repetitivity (see~\cite{damaniklenz}), the description of the graphs $I_k$ and Theorem~\ref{th:growth} imply the following.

\begin{corollary}
The orbital graphs of $F$ are linearly repetitive, hence the group $F$ has subexponential growth.
\end{corollary}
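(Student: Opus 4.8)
The plan is to check that $F$ satisfies all the hypotheses of Theorem~\ref{th:growth} and then quote that theorem. Two of the three hypotheses are already available. First, $F$ is a fragmentation of a minimal dihedral group action: it is generated by the fragmentation $A=\langle a_1,b_1,c_1,d_1,a_2,b_2,c_2,d_2\rangle$ of $a$ and the fragmentation $B=\langle a_0,b_0,c_0,d_0\rangle$ of $b$, where $\langle a,b\rangle$ acts minimally on $\xo$ (equivalently, via the semiconjugacy $\kappa$, on $\R/\Z$, where $ba$ is the golden mean rotation $x\mapsto x+\varphi$). Second, $\xi=(\one\two)^\omega$ is a purely non-Hausdorff singularity of $F$: the group of germs of $F$ at $\xi$ is isomorphic to $(\Z/2\Z)^2$ and contains no element acting as $b$ on a whole neighbourhood of $\xi$, which is exactly what the partition into $P_0,P_1,P_2$ and the choice of the subgroup $\{\unit,b_0,c_0,d_0\}$ achieve, in the manner of the discussion following Lemma~\ref{lem:partition} — it is also the fact already invoked when noting that periodicity of $F$ follows from Theorem~\ref{th:main}. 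So the whole problem reduces to showing that the orbital graphs of $F$ are linearly repetitive.

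Here I would use the recursive description of the segments $I_k$. By Proposition~\ref{pr:endsofIn} every orbital graph of $F$ contains $I_nw$ for each $n$, and — as recorded in the corollaries following Proposition~\ref{pr:Lambdai} — the finite labelled subgraphs occurring in orbital graphs of $F$ are precisely those occurring in the $I_n$. Encoding a bi-infinite chain by its bi-infinite word of piece-labels, the orbital graphs of $F$ are thus described, up to shift and reversal, by a single subshift $\mathcal S_F$ over the finite alphabet of edge-types. The recursion $I_k=I_{k-2}^{-1}\,\two\,\edge{e_{k-2}}\,I_{k-1}^{-1}\,\one$, together with $L_k=L_{k-1}\one\sqcup L_{k-2}\two$ and the fact that $e_k$ depends only on $k\bmod 9$ for $k\ge 3$, exhibits $\mathcal S_F$ as a substitutional subshift: after enlarging the alphabet so that a letter records, besides the edge-type, an orientation bit and the residue modulo $9$ needed to recover the labels, the two-term Fibonacci-type recursion becomes an ordinary one-term substitution $\sigma$ on the enlarged (``telescoped'') alphabet, with $\mathcal S_F=\X_\sigma$; the reversals $I_{k-2}^{-1}$ cause no trouble because the dihedral symmetry makes $\mathcal S_F$ invariant under reversal. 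The substitution $\sigma$ is primitive — this follows from primitivity of the underlying Fibonacci substitution together with the fact that the decorations cycle through all their values — so $\mathcal S_F$ is minimal and aperiodic; alternatively, minimality of $\mathcal S_F$ may be read straight off the minimality of the golden mean dihedral action, exactly as in the proof of Proposition~\ref{prop:embeddingfull}. By the theorem of D.~Damanik and D.~Lenz~\cite{damaniklenz}, a minimal substitutional subshift is linearly repetitive; and since the numbers of vertices of the $I_k$ are consecutive Fibonacci numbers and hence grow by a bounded ratio, linear recurrence of $\mathcal S_F$ yields the bound $R(r)<Kr$ of Definition~\ref{def:linearlirecurrent} for the orbital graphs of $F$.

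With the three hypotheses in hand, Theorem~\ref{th:growth} bounds the growth function of $F$ above by $\exp\!\bigl(C_1n\,e^{-C_2\sqrt{\log n}}\bigr)$, which is subexponential; and since $F$ is finitely generated, infinite and periodic (Theorem~\ref{th:main}), Gromov's theorem on groups of polynomial growth~\cite{gro:gr} shows the growth is not polynomial, so $F$ is in fact of intermediate growth. The one point that needs genuine work is the middle step: writing down explicitly the finite alphabet and the primitive substitution $\sigma$, and verifying that the orientation reversals $I_{k-2}^{-1}$ and the appended letters $\one,\two$ in the recursion for $I_k$ are bookkept correctly, so that $\X_\sigma$ really is the set of labelled bi-infinite chains that arise as orbital graphs of $F$. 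This is a finite, if slightly fiddly, computation — a decorated Fibonacci substitution — after which linear repetitivity is immediate from the theorem of Damanik and Lenz and the conclusion from Theorem~\ref{th:growth}.
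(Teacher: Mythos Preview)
Your proposal is correct and follows essentially the same route as the paper: the recursive description of the segments $I_k$ exhibits the orbital graphs as coming from a substitutional subshift, the Damanik--Lenz theorem \cite{damaniklenz} (minimality of a substitutional shift is equivalent to linear repetitivity) supplies the linear repetitivity, and Theorem~\ref{th:growth} then gives subexponential growth. The paper's own proof is a single sentence to this effect; you have simply filled in the surrounding verifications (that $F$ is a fragmentation of a minimal dihedral action with a purely non-Hausdorff singularity) and been more explicit about the encoding step, correctly flagging the bookkeeping of reversals and the mod-$9$ decoration as the only place where one has to be careful.
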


\subsection{The proof of Theorem~\ref{th:F}}

It follows from Corollary~\ref{cor:tables} that the topological full group $\full(F, \xo)$ is the group of all transformations $g$ that are defined by the rules of the form $g(v_iw)=u_ih_i(w)$, where $v_1, \ldots, v_n, u_1, \ldots, u_n\in\xs$, and $h_1, h_2, \ldots, h_n\in\{b_0, c_0, d_0, \unit\}$, as in Corollary~\ref{cor:tables}.

\begin{proposition}
\label{pr:fullgroup}
The group $F$ coincides with its topological full group.
\end{proposition}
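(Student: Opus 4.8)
The plan is to prove the only non-obvious inclusion, $\full(F,\xo)\subseteq F$ (the reverse holds for any action). By Corollary~\ref{cor:tables} and the description of $\full(F,\xo)$ recalled just above, an arbitrary $g\in\full(F,\xo)$ is given by a table $g(v_iw)=u_ih_i(w)$ with $\{v_i\}$, $\{u_i\}$ maximal antichains in $\xs$, $\mathrm{wt}(v_i)=\mathrm{wt}(u_i)$, and $h_i\in\{b_0,c_0,d_0,\unit\}$. I would first normalize $g$. A short mutual induction shows each of $b_0,c_0,d_0$ is finite-state and sends every cylinder $w\xo$ to a cylinder $w'\xo$ of the same weight, with restriction again in $\{b_0,c_0,d_0,\unit\}$ after deleting $w'$. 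Using this and Lemma~\ref{lem:splitting} applied below each $v_i$, refine the domain partition so that, for $n$ large, the domain antichain becomes $L_n\sqcup L_{n-1}\two$ (a cover of $\xo$, by Lemma~\ref{lem:splitting}); a separate elementary lemma — an antichain covering $\xo$ all of whose words have weight in $\{n,n+1\}$ must equal $L_n\sqcup L_{n-1}\two$ — identifies the correspondingly refined range antichain with the same set. Hence $g(zw)=\sigma(z)\,\tilde h_z(w)$ for $z\in L_n\sqcup L_{n-1}\two$, where $\sigma$ is a weight-preserving self-bijection of $L_n\sqcup L_{n-1}\two$, hence $\sigma\in\symm(L_n)\oplus\symm(L_{n-1}\two)\le\symm_\omega$, and $\tilde h_z\in\{b_0,c_0,d_0,\unit\}$. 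With $h:=\sigma^{-1}g$ we get $g=\sigma h$ where $h$ is a \emph{pure decoration}: $h(zw)=z\,\tilde h_z(w)$. So it suffices to prove (A) $\symm_\omega\le F$, and (B) every pure decoration lies in $F$.

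I would deduce (B) from (A). Write $x_v$ ($x\in\{a,b,c,d\}$, $v\in\xs$) for the homeomorphism acting as $x_0$ below $v$ and trivially elsewhere, consistently with the paper's spine indexing $x_{3k}=x_{(\one\two)^k}$, $x_{3k+1}=x_{\one(\one\two)^k}$, $x_{3k+2}=x_{\two(\one\two)^k}$. Each $a_m$ is the rigid transposition interchanging the equal-weight cylinders $z_m\one\one\xo$ and $z_m\two\xo$ (where $z_m$ is the spine vertex of weight $m$) and fixing the rest, hence $a_m\in\symm_\omega\le F$ for all $m$. The recursions \eqref{eq:eq1}--\eqref{eq:eq3} — e.g.\ $b_m=d_{m-3}$, $c_m=a_{m-3}b_{m-3}$, $d_m=a_{m-3}c_{m-3}$ for $m\ge 3$, and the generators for $m\le 2$ — then give $b_m,c_m,d_m\in F$ for all $m$; (A) is exactly what makes the required $a_{m-3}$ available and so breaks the apparent circular dependence on the $a_m$'s. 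For a general $z$ and $x\in\{b,c,d\}$, conjugating $x_{z_{\mathrm{wt}(z)}}$ by a rigid transposition $\pi\in\symm_\omega$ swapping $z\xo$ with $z_{\mathrm{wt}(z)}\xo$ yields $x_z\in F$. A pure decoration is a finite product of commuting $x_z$'s, so (B) follows.

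For (A): by Proposition~\ref{pr:symabel}, $\symm_\omega=\langle\alt_\omega,a_0,a_1,a_2\rangle$ with $a_0,a_1,a_2\in F$, so it suffices to show $\alt(L_n)\oplus\alt(L_{n-1}\two)\le F$ for all $n$. I would prove this by induction on $n$ (trivial for small $n$). For the step, split a $3$-cycle of $L_n$ using $L_n=L_{n-1}\one\sqcup L_{n-2}\two$ (equation~\eqref{eq:Ln}): a $3$-cycle moving only words ending in $\one$ (resp.\ in $\two$) comes, after deleting the last letter, from the inductive hypothesis for $L_{n-1}$ (resp.\ $L_{n-2}$), transported by the localizing generators $a_1,b_1,c_1,d_1$, $a_2,b_2,c_2,d_2$ (copies of $a_0,b_0,c_0,d_0$ below $\one$ and below $\two$) and the self-similarity of the action; since $3$-cycles generate, and are all conjugate in, the alternating group, one is left to produce a single ``straddling'' $3$-cycle in each of $L_n$ and $L_{n-1}\two$. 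These are extracted from the explicit generators via Lemma~\ref{lem:deepaction}, which writes a deep generator $y_i$ as $h\,a_{n-3}d_n$ with $h\in\symm(L_{n-4})\oplus\symm(L_{n-5}\two)$; once the induction is arranged so that the auxiliary factors $a_{n-3}$ (a rigid transposition, hence in $\symm_\omega$ and available from lower stages) and $d_n$ (furnished by the decoration part, via the shallow instances of \eqref{eq:eq1}--\eqref{eq:eq3}) already lie in $F$, one obtains $h\in F$, and from such $h$ together with the inductively available within-block part the needed straddling $3$-cycles. In practice (A) and (B) are run as a single induction along the levels $n$.

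The main obstacle is (A): making this level-by-level induction actually go through — keeping every auxiliary element inside $\symm_\omega$ by controlling weights, and juggling the mod-$9$ periodicity in Lemma~\ref{lem:deepaction} together with the two self-similar copies below $\one$ and below $\two$, so that one genuinely reaches all of $\alt(L_n)\oplus\alt(L_{n-1}\two)$. Granting (A), the normalization $g=\sigma h$ and the deduction of (B) are essentially formal, and then $g=\sigma h\in F$ as required.
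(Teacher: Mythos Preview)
Your overall architecture matches the paper's: reduce to (A) $\symm_\omega\le F$ and (B) all $x_v\in F$, with (B) derived from (A) via the recursions and conjugation by rigid permutations. The normalization $g=\sigma h$ and the passage from $\alt_\omega$ to $\symm_\omega$ via Proposition~\ref{pr:symabel} are fine and essentially what the paper does (implicitly for the first, explicitly in Lemma~\ref{lem:symmgv} for the second).

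The gap is in your treatment of~(A), and it stems from a misreading of how Lemma~\ref{lem:deepaction} is meant to be used. You propose to \emph{extract} the factors of $y_i=h\,a_{n-3}\,d_n$: show $a_{n-3}\in F$ and $d_n\in F$ first, deduce $h\in F$, and then use $h$ to build a straddling $3$-cycle. This creates the circular dependency you note (needing $d_n$, i.e., part of~(B), inside the induction for~(A)), and moreover $h\in\symm(L_{n-4})\oplus\symm(L_{n-5}\two)$ lives at a lower level, so it cannot by itself furnish a $3$-cycle straddling $L_{n-1}\one$ and $L_{n-2}\two$. The paper's point is the opposite: the element $y_{i'}$ is a \emph{generator} of $F$, hence already in $F$ with no prerequisites; the decomposition $y_{i'}=h\,a_{n-2}\,d_{n+1}$ is used only to \emph{compute} the effect of conjugation by $y_{i'}$ on $\alt(L_{n-2}\two)$. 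Concretely, since $h$ normalizes $\alt(L_{n-2}\two)$ (it lies in a lower $\symm(L_k)\oplus\symm(L_{k-1}\two)$ and acts on $L_{n-2}\two$ by a permutation) and $d_{n+1}$ fixes every $v\xo$ with $v\in L_{n-2}\two$ except that it acts nontrivially inside $Q_{n-2}\two\xo$ (irrelevant after conjugation), one gets $y_{i'}\alt(L_{n-2}\two)y_{i'}=\alt\bigl((L_{n-2}\two\setminus\{P_{n-2}\two\})\cup\{Q_{n-1}\one\}\bigr)$, an alternating group on a subset of $L_n$ meeting $L_{n-1}\one$ in one point. Taking commutators with $\alt(L_{n-1})$ (available by induction; note its action on $L_n\sqcup L_{n-1}\two$ is diagonal, so commuting against something supported in $L_n$ kills the $L_{n-1}\two$ part) then yields enough $3$-cycles in $L_n$, and $\alt(L_{n-1}\two)$ is recovered afterward from $\alt(L_{n-1})$ and $\alt(L_n)$. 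With this, (A) is a clean induction with no dependence on~(B), and your joint induction is unnecessary.

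A secondary point: your ``within-block'' step is also looser than it looks. Elements of $\alt(L_{n-1})$ (resp.\ $\alt(L_{n-2})$) act \emph{diagonally} on the relevant pieces, so they do not directly give pure $3$-cycles in $L_{n-1}\one$ or $L_{n-2}\two$; one must take commutators against something supported in $L_n$ to strip off the unwanted diagonal part. This is precisely what the paper's commutator $[\alt(L_{n-1}),\,y_{i'}\alt(L_{n-2}\two)y_{i'}]$ accomplishes in one stroke.
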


\begin{proof}
Let us prove at first that $F$ contains $\alt_\omega$, i.e.,
that $F$ contains $\alt(L_n)\oplus\alt(L_{n-1}\two)$ for every $n\ge 1$.

The groups $\alt(L_n)$ and $\alt(L_n\two)$ are trivial for $n=0, 1, 2$.
Let us prove by induction that $\alt(L_k)\oplus\alt(L_{k-1}\two)<F$. Suppose that it is true for all $k<n$, and let us prove it for $n$.

Recall that we have
\[\xo=L_n\xo\sqcup L_{n-1}\two\xo\]
and
\[L_n\xo=L_{n-1}\one\xo\sqcup L_{n-2}\two\xo.\]

By Lemma~\ref{lem:deepaction},
for one of the letters $y\in\{b, c, d\}$ and $i'\in\{0, 1, 2\}$ such that $i'\equiv i+1\pmod{3}$, we will have $y_{i'}=ha_{n-2}d_{n+1}$, where $h\in\symm(L_{n-3})\oplus\symm(L_{n-4}\two)<\symm(L_{n-1})\oplus\symm(L_{n-2}\two\xo)$.

The element $a_{n-2}$ interchanges the sets $P_{n-2}\two\xo$ and $Q_{n-1}\one\xo$ and acts trivially on the complement of their union (where $P_n$ and $Q_n$ are as in Proposition~\ref{pr:endsofIn}), since
\[P_{n-2}\two=\one(\one\two)^{(n-3)/3}\two,\quad Q_{n-1}\one=\one\one(\two\one)^{(n-3)/3}\one=\one(\one\two)^{(n-3)/3}\one\one,\]
if $i=0$,
\[P_{n-2}\two=\two(\one\two)^{(n-4)/3}\two,\quad Q_{n-1}\one=(\two\one)^{(n-1)/3}\one=\two(\one\two)^{(n-4)/3}\one\one,\]
if $i=1$, and
\[P_{n-2}\two=(\one\two)^{(n-2)/3}\two,\quad
Q_{n-1}\one=\one(\two\one)^{(n-2)/3}\one=(\one\two)^{(n-2)/3}\one\one,\]
if $i=2$.

The element $d_{n+1}$ preserves each set of the form $v\xo$ for $v\in L_{n-1}\one\cup L_{n-2}\two\cup L_{n-1}\two$, acts identically on each of them, except for the set $Q_{n-2}\two\xo$, since $Q_{n-2}\two$ is one of the sequences $(\two\one)^{(n-2)/3}\two=\two(\one\two)^{(n-2)/3}$, $\one(\two\one)^{(n-3)/3}\two=(\one\two)^{n/3}$, $\one\one(\two\one)^{(n-4)/3}\two=\one(\one\two)^{(n-1)/3}$.

It follows that \[y_{i'}\alt(L_{n-2}\two)y_{i'}=\alt\Bigl((L_{n-2}\two\setminus
\{P_{n-2}\two\})\cup\{Q_{n-1}\one\}\Bigr),\]
see Figure~\ref{fig:alt}.

\begin{figure}
\centering
\includegraphics{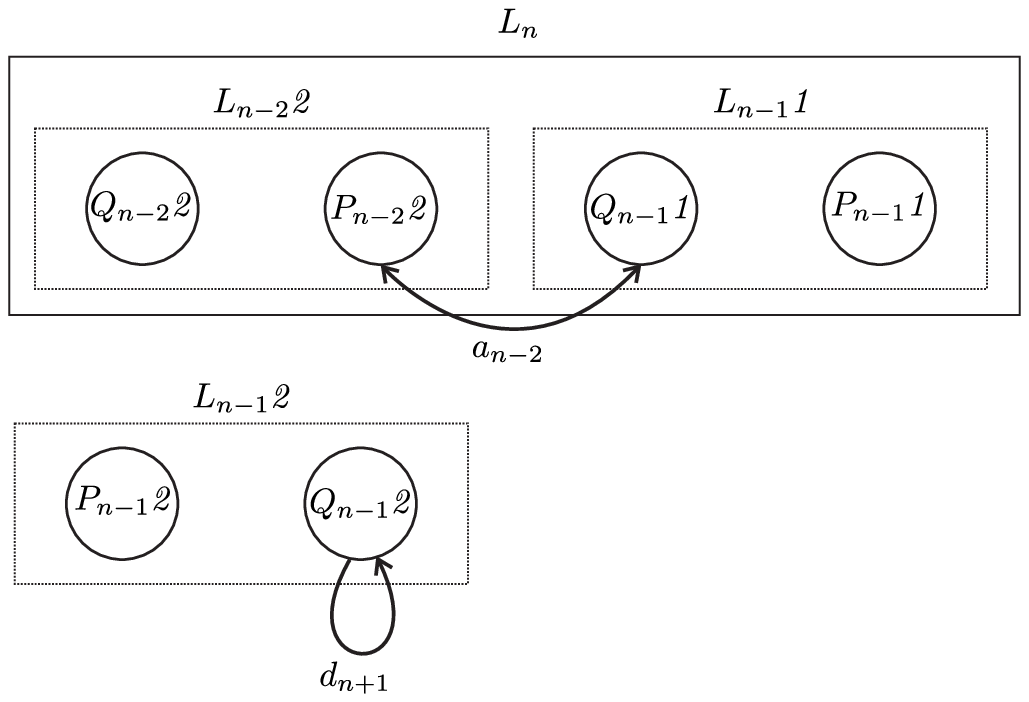}
\caption{Generation of $\alt(L_n)\oplus\alt(L_{n-1}\two)$}
\label{fig:alt}
\end{figure}

It follows that the group generated by $[\alt(L_{n-1}), y_{i'}\alt(L_{n-2}\two)y_{i'}]$ is equal to $\alt(L_n)$.

The elements of $\alt(L_{n-1})<F$ act on $L_{n-1}\one\xo\sqcup L_{n-1}\two\xo$. We have $L_{n-1}\one\subset L_n$, hence $\alt(L_{n-1}\two)$ is contained in the group generated by $\alt(L_{n-1})$ and $\alt(L_n)$, hence it is also contained in $F$.

This finishes the proof of the inclusion $\alt_\omega<F$. 

\begin{lemma}
\label{lem:symmgv}
The group $F$ contains $\symm_\omega$ and all elements of the form $x_v$ for $x\in\{b, c, d\}$ and $v\in\xs$.
\end{lemma}

\begin{proof}
It was shown in Proposition~\ref{pr:symabel} that $\symm_\omega/\alt_\omega$ is equal to the set of images of $a_0, a_1, a_2, \unit$. Since $a_i\in F$, and $\alt_\omega< F$, this implies that $\symm_\omega<F$. In particular, all homeomorphisms $a_n$ belong to $F$.

The relations $b_{n+3}=d_n, c_{n+3}=a_nb_n, d_{n+3}=a_nc_n$ imply by induction that all the homeomorphisms $b_n, c_n, d_n$ belong to $F$.

Let $v\in\xs$, and let $n$ be the weight of $v$. Let $\sigma\in\symm(L_n)$ be the transposition $(v, u)$, where $u\in L_n$ is the unique sequence of the form $(\one\two)^k, \one(\one\two)^k$, or $\two(\one\two)^k$ of weight $n$. Then $x_v=\sigma x_n\sigma$. It follows that $x_v\in F$.
\end{proof}
Lemma~\ref{lem:symmgv} finishes the proof of Proposition~\ref{pr:fullgroup}.
\end{proof}

The next proposition finishes the proof of Theorem~\ref{th:F}.

\begin{proposition}
The derived subgroup $[F, F]$ coincides with $\alt(F, \xo)$.
\end{proposition}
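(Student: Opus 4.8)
I would prove the two inclusions $\alt(F,\xo)\subseteq[F,F]$ and $[F,F]\subseteq\alt(F,\xo)$ separately.

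The inclusion $\alt(F,\xo)\subseteq[F,F]$ is immediate from the machinery already set up. By Proposition~\ref{pr:fullgroup} we have $F=\full(F,\xo)$, and $F$ is non-abelian, since by Lemma~\ref{lem:symmgv} it contains the symmetric groups $\symm(L_n)$. Hence $[F,F]$ is a non-trivial normal subgroup of $\full(F,\xo)$, and by Theorem~\ref{th:fullgrnek} the simple group $\alt(F,\xo)$ is contained in every non-trivial normal subgroup of $\full(F,\xo)$.

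For the reverse inclusion the plan is to show that $\bar F:=F/\alt(F,\xo)$ is abelian. A first step is to observe that $\alt_\omega\subseteq\alt(F,\xo)$: for pairwise distinct $v_1,v_2,v_3\in L_n$ the $3$-cycle of the cylinders $v_1\xo,v_2\xo,v_3\xo$ is exactly the element $h_\alpha$ obtained from $U=v_1\xo$, $g_1=\unit$, $g_2=(v_1\,v_2)$, $g_3=(v_1\,v_3)\in\symm(L_n)\subseteq F$ and the cycle $\alpha=(1\,2\,3)$, and such $3$-cycles generate $\alt(L_n)$ and $\alt(L_{n-1}\two)$; taking the union over $n$ gives $\alt_\omega\le\alt(F,\xo)$. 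Consequently $\symm_\omega$ maps into $\bar F$ with abelian image (a quotient of $\symm_\omega/\alt_\omega\cong(\Z/2\Z)^2$), so in particular the images of all $a_m$, and of $a_0,a_1,a_2$ in particular, commute. Now $F$ is generated by the involutions $a_i,b_i,c_i,d_i$ with $i=0,1,2$, so $\bar F$ is generated by their images, each of order at most $2$, and it remains only to check that the images of any two generators commute. All pairs except the ``cross'' pairs $\{s,t\}$ with $s\in\{a_0,b_0,c_0,d_0\}$ and $t\in\{a_j,b_j,c_j,d_j\}$, $j\in\{1,2\}$, are handled by commutation relations already holding in $F$ (and the pairs $\{a_0,a_j\}$ by the previous sentence).

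For the cross pairs I would compute each commutator $[s,t]$ explicitly using the local-action description of Corollary~\ref{cor:tables}. Every such commutator is supported on a finite union of cylinders of equal weight, on which it acts by a permutation of those cylinders composed with germ maps lying in $\{\unit,b_0,c_0,d_0\}$ at the (at most three) cofinality classes of the points of $R=\{(\one\two)^\omega,\one(\one\two)^\omega,\two(\one\two)^\omega\}$. The plan is then to exhibit $[s,t]$ as a product of $3$-cycles of cylinders in $F$: using Lemma~\ref{lem:splitting} to produce, at a sufficiently deep level, enough cylinders of matching weight, one realizes every transposition-type move and every germ occurring in $[s,t]$ as a product of an \emph{even} number of $3$-cycles, so that $[s,t]\in\alt(F,\xo)$. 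The main obstacle is exactly this verification: unlike the elements of $\alt_\omega$, the cross-commutators genuinely carry germs at the special points of $R$, so one cannot stay inside $\symm_\omega$, and one must keep careful track both of the parity of the cylinder-permutation and of how these germs are ``paid for'' inside $\alt(F,\xo)$. I expect the cleanest organization reduces, via the relations~\eqref{eq:eq1}--\eqref{eq:eq3} and conjugation by elements of $\symm_\omega$, all the cross pairs to a handful of model cases, each dispatched by one explicit $3$-cycle decomposition. Once $\bar F$ is abelian we get $[F,F]\subseteq\alt(F,\xo)$, and combined with the first inclusion this yields $[F,F]=\alt(F,\xo)$; then $[F,F]$ is simple by Theorem~\ref{th:fullgrnek} and has finite index since $F/[F,F]$ is a quotient of $(\Z/2\Z)^9$, which completes the proof of Theorem~\ref{th:F}.
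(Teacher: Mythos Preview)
Your inclusion $\alt(F,\xo)\subseteq[F,F]$ and the reduction to showing that $F/\alt(F,\xo)$ is abelian are exactly as in the paper. The divergence is in how you handle the ``cross'' commutators $[s,t]$ with $s\in\{a_0,b_0,c_0,d_0\}$ and $t\in\{a_j,b_j,c_j,d_j\}$, $j\in\{1,2\}$.

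Your plan is to compute each such commutator explicitly via Corollary~\ref{cor:tables} and exhibit it as a product of $3$-cycles in $\alt(F,\xo)$, keeping track of the germs at the three special points. You correctly flag this as the main obstacle, but you do not actually carry it out; what you have is a strategy, not a proof. It may well succeed, but the bookkeeping with germs is exactly the delicate part, and nothing in your outline guarantees the parities and germ contributions cancel as needed.

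The paper sidesteps this computation entirely with a shifting trick. From the recursions~\eqref{eq:eq1}--\eqref{eq:eq3} one gets $b_n\equiv b_{n+9}$, $c_n\equiv c_{n+9}$, $d_n\equiv d_{n+9}$ modulo $\alt(F,\xo)$ (using $a_n\equiv a_{n+3}$ modulo $\alt_\omega$). Separately, for any $v_1,v_2\in L_n$ with $n>2$ there is $\sigma\in\alt(L_n)$ with $\sigma(v_1)=v_2$, whence $x_{v_1}\equiv x_{v_2}$ modulo $\alt(F,\xo)$. Now to show $[x_i,y_j]\in\alt(F,\xo)$ for $x,y\in\{b,c,d\}$: replace $x_i$ by $x_{i+9k}$ and $y_j$ by $y_{j+9k}$ for $k$ large, then move each to $x_{u_1}$, $y_{u_2}$ with $u_1,u_2$ \emph{incomparable} of the same respective weights; these have disjoint supports and hence commute in $F$. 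This avoids any explicit commutator computation and any germ accounting. What your approach would buy, if completed, is a hands-on picture of the commutators; what the paper's approach buys is a two-line argument once the periodicity $x_n\equiv x_{n+9}$ and the $\alt(L_n)$-conjugation are in place.
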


\begin{proof}
By Theorem~\ref{th:fullgrnek} the group $\alt(F, \xo)$ is simple and is contained in every non-trivial normal subgroup of $F$. In particular, $\alt(F, \xo)\le [F, F]$.

Therefore, it is enough to prove that $F/\alt(F, \xo)$ is commutative, i.e., that the generators of $F$ commute modulo $\alt(F, \xo)$.

Note that $\alt(F, \xo)$ obviously contains $\alt_\omega$. Note that $a_n=a_{n+3}$ modulo $\alt_\omega$ for all $n\ge 0$. We have $b_n=a_na_{n+3}b_{n+9}$, $c_n=a_na_{n+6}c_{n+9}$, $d_n=a_{n+3}a_{n+6}d_{n+9}$. It follows that $b_n=b_{n+9}$, $c_n=c_{n+9}$, and $d_n=d_{n+9}$ modulo $\alt(F, \xo)$. We have shown that for every $x\in\{a, b, c, d\}$ we have $x_n=x_{n+9}$ modulo $\alt(F, \xo)$.

If $n>2$, and $v_1, v_2\in L_n$, then there exists an element $\sigma\in\alt(L_n)$ such that $\sigma(v_1)=v_2$. Then, for every letter $x\in\{a, b, c, d\}$, we have $\sigma x_{v_1}\sigma^{-1}=x_{v_2}$. It follows that $x_{v_1}=x_{v_2}$ modulo $\alt(L_n)$.

For arbitrary words $v_1, v_2$ of sufficiently big weight there exist incomparable words $u_1, u_2$ of the same weights as $v_1, v_2$, respectively. Then, for every $x, y\in\{a, b, c, d\}$ we have $x_{v_1}=x_{u_1}$ and $y_{v_2}=y_{u_2}$ modulo $\alt(F, \xo)$, and $[x_{u_1}, y_{u_2}]=1$, hence $[x_{v_1}, y_{v_2}]\in\alt(F, \xo)$.

Let $x_i, y_j$ be generators of $F$, where $x, y\in\{b, c, d\}$, $i, j\in\{0, 1, 2\}$. Then $x_i=x_{i+9k}$, $y_j=y_{j+9k}$ modulo $\alt(F, \xo)$ for all non-negative integers $k$, hence, by the above, $x_i$ and $y_j$ commute in $F/\alt(F, \xo)$.
\end{proof}

\def\cprime{$'$}\def\ocirc#1{\ifmmode\setbox0=\hbox{$#1$}\dimen0=\ht0
  \advance\dimen0 by1pt\rlap{\hbox to\wd0{\hss\raise\dimen0
  \hbox{\hskip.2em$\scriptscriptstyle\circ$}\hss}}#1\else {\accent"17 #1}\fi}
  \def\cprime{$'$}
\providecommand{\bysame}{\leavevmode\hbox to3em{\hrulefill}\thinspace}
\providecommand{\MR}{\relax\ifhmode\unskip\space\fi MR }
% \MRhref is called by the amsart/book/proc definition of \MR.
\providecommand{\MRhref}[2]{%
  \href{http://www.ams.org/mathscinet-getitem?mr=#1}{#2}
}
\providecommand{\href}[2]{#2}

\end{document}